\newcommand\cF{\mathcal{F}}
\newcommand\bR{\mathbb{R}}
\newcommand\bQ{\mathbb{Q}}
\newcommand\cP{\mathcal{P}}
\newcommand\cR{\mathcal{R}}
\newcommand\cM{\mathcal{M}}
\newcommand\cI{\mathcal{I}}
\newcommand\cJ{\mathcal{J}}
\newcommand\bN{\mathbb{N}}
\newcommand\bZ{\mathbb{Z}}
\newcommand\bH{\mathbb{H}}
\newcommand\cG{\mathcal{G}}
\newcommand\cH{\mathcal{H}}
\newcommand\fH{\mathfrak{H}}
\newcommand\fR{\mathfrak{R}}
 \theoremstyle{definition}
\newtheorem{theorem}{Theorem}[section]
\newtheorem{lemma}[theorem]{Lemma}
\newtheorem{corollary}[theorem]{Corollary}
\newtheorem{definition}{Definition}[section]
\theoremstyle{remark}
\newtheorem{remark}[theorem]{Remark}
\newtheorem{assumption}[theorem]{Assumption}
\newcommand{\mysection}[1]{\section{#1}
 \setcounter{equation}{0}}
\begin{document}

	\title[On the $l_p$ stability estimates]{On the $l_p$ stability estimates for stochastic and deterministic difference equations and their application to SPDEs and PDEs} 
  
	\author{Timur Yastrzhembskiy}
\email{yastr002@umn.edu}
\address{127 Vincent Hall, University of Minnesota,
 Minneapolis, MN, 55455}

\keywords{SPDE, Krylov's $L_p$-theory of SPDEs, fully discrete finite difference schemes, rate of convergence of finite difference approximations}

  \begin{abstract}
	In this paper we develop the $l_p$-theory of space-time stochastic difference equations which can be considered as a discrete counterpart of N.V. Krylov's $L_p$-theory of stochastic partial differential equations (SPDEs).
	We also prove a Calderon-Zygmund type  estimate for deterministic parabolic finite difference schemes with variable coefficients under relaxed assumptions on the coefficients, the initial data and the forcing term.
   \end{abstract}

 \maketitle

	\mysection{Introduction}
						\label{section 1}
		
		Let 
		$(\Omega, \mathcal{F}, P)$
		 be a complete probability space, 
		and  let 
		$(\mathcal{F}_t, t \geq 0)$
		 be an increasing filtration of $\sigma$-fields 
		$\mathcal{F}_t \subset \mathcal{F}$
		 containing all $P$-null sets of $\Omega$.
		By $\mathcal{P}$ 
		we denote the predictable $\sigma$-field
		 generated by 
		 $(\mathcal{F}_t, t \geq 0)$.
		Let $d_0$ be a positive integer,
		 and 
		$w^i (t), t \in \bR, i = 1, \ldots, d_0$
		 be a sequence of independent standard $\cF_t$-adapted Wiener processes.

		Let $\bR^d$ be a $d$-dimensional Euclidean space of points
	$
		x = (x^1, \ldots, x^d)
	$
	with the standard basis 
	 $
		e_i, i = 1, \ldots, d.
	$
	  By $\bZ^d$ we denote the subset of $\bR^d$ of all points with integer coordinates, 
		and we  set $\bZ_{+} = \{0, 1, 2, \ldots\}$, $\bN = \{1, 2, \ldots\}$.
	For 
	$\gamma, h \in (0, 1]$
	we denote
	$\gamma \bZ = \{\gamma n, n \in \bZ\}$, 
	$\gamma \bZ_{+} = \{\gamma n, n \in \bZ_{+}\}$, 
	$ h \bZ^d = \{ h y, y \in \bZ^d\}$.
		For a function on $h \bZ^d$, we denote
	  $$ 
			\delta_{ \xi, h}  f (x) = (f (x + h \xi) - f (x))/h,  \quad
		\Delta_{\xi, h} f (x) = (f (x + h \xi) - 2 f(x) + f (x -h \xi))/h^2.
	   $$
	 	In this paper we consider the following stochastic  difference equation:
		\begin{equation}
					\label{3.1}
		\begin{aligned}
			 & v_{\gamma, h}  (t+\gamma, x) -  v_{\gamma, h}  (t, x)
												  = \gamma \sum_{k = 1}^{d_1} \lambda_{k} (t, x) \Delta_{l_k, h} v_{\gamma, h} (t, x)\\
						&	 + \gamma \sum_{i = 1}^d  b^i (t, x) \delta_{e_i, h} v_{\gamma, h} (t, x)   +  \gamma f (t, x) \\
			&  + \sum_{ l = 1}^{d_0}    (g^l (t, x) + \nu^l (t, x) v_{\gamma, h} (t, x)) (w^l (t+\gamma) - w^l (t)), \\
		&  v_{\gamma, h} (0, x) = u_0 (x), \, t \in \gamma \bZ_{+}, x \in h \bZ^d.
		\end{aligned}
		\end{equation}
		Here $d_1 \geq d$, and $\lambda_k (t, x) \in [\delta_{1}^{*}, \delta_2^{*}]$, where $\delta_1^{*}, \delta_2^{*}$ are positive numbers.
			We also consider a  deterministic equation given by 	
		\begin{equation}
					\label{4.1}
		\begin{aligned}
			 & u_{\gamma, h}  (t+\gamma, x) -  u_{\gamma, h}  (t, x)
												  = \gamma \sum_{k = 1}^{d_1} \lambda_{k} (t, x) \Delta_{l_k, h} u_{\gamma, h} (t, x)\\
																												&	 + \gamma \sum_{i = 1}^d b^i (t, x) \delta_{e_i, h} u_{\gamma, h} (t, x)
			+  \gamma f (t, x),\\
		 &  u_{\gamma, h} (0, x) = u_0 (x), \,  t \in \gamma \bZ_{+}, x \in h \bZ^d.
		\end{aligned}
		\end{equation}

		Our goal is to prove stability estimates 
		for $v_{\gamma, h}$ and $u_{\gamma, h}$ 
		 in  discrete $l_p$ Sobolev norms.
		We do it  under relatively mild assumptions 
		 on the coefficients, the free terms and the initial data (see Theorem \ref{theorem 3.2} and Theorem \ref{theorem 4.1}).
		Theorem \ref{theorem 3.2}  can be viewed as a discrete counterpart of  N.V. Krylov's $L_p$-theory of SPDEs (see Theorem 5.1 of  \cite{Kr_08}).
		In the deterministic case one can consider the result of Theorem \ref{theorem 4.1}
		 as a discrete version of the $W^2_p (\bR^d)$
		 estimate for nondivergence parabolic equations (see, for example, Theorem 5.2.10 of \cite{Kr_08}).
		One of the virtues of  such $l_p$ estimates  is that they can be used
		 to estimate the rate of convergence of $v_{\gamma, h}$ in discrete Sobolev spaces
		 to the  solution of the corresponding  SPDE.
		In Theorem \ref{theorem 3.1}
		we will show that this rate of convergence 
		is of order
		$
			h^{1 - \varepsilon}
		$
		with any $\varepsilon > 0$.
		We also estimate the rate of convergence in  
		  the uniform norm on the lattice
		 $
			\gamma \bZ_{+}  \times h\bZ^d.
		$
		In the deterministic case we prove that the rate of convergence is of order $h^2$
		which is optimal.

		The reason why we consider  finite difference schemes of type \eqref{3.1} is twofold.
		First, it is well-known that
		 if $a (t, x)$ 
			is a bounded symmetric matrix-valued function 
		with eigenvalues bounded away from $0$ for all $(t, x)$, 
		then there exist vectors
		 $
			l_1, \ldots, l_{d_1} \in \bZ^d
		$
		 and functions 
		$
			\lambda_1, \ldots, \lambda_{d_1}
		$ 
		such that
		$$
			a (t, x) = \sum_{k = 1}^{d_1} \lambda_k (t, x) l_k l_k^{*}.
		$$
		In addition, all the functions 
		$\lambda_k (t, x)$
		 are as regular as $a (t, x)$.
		Interestingly, one can choose such vectors 
		$
			l_k, k =  1, \ldots, d_1
		$
		uniformly for all functions that take values in the set of symmetric matrices
		 with eigenvalues in a fixed interval, 
		say 
		$
			[\delta_1, \delta_2]
		$
		 (see Remark \ref{remark 3.1}). 
		Another virtue  of a second order difference operator 
		$
			\gamma \sum_{k  =1}^{d_1} \lambda_k (t, x) \Delta_{l_k, h}
		$
		is that, for $\gamma/h^2$ small enough, it can be viewed as a generator
		of a random walk with non identically distributed increments. 
		This fact implies the maximal principle and some useful pointwise bounds of a discrete fundamental solution.

		The $l_p$ approach to  the space-time finite difference schemes for deterministic parabolic and elliptic equations is well developed.
		The interior $l_2$ estimates
		 for elliptic type  finite difference schemes  
		  are established in \cite{TB_68},  and for  parabolic schemes in   \cite{B_71} (see also \cite{St_91}).
		 For $p > 1$ the interior $l_p$ estimate in the parabolic case is proved in \cite{B_73}
		and in the elliptic case in \cite{S_73, D_12}.
		In \cite{B_73, S_73} the results are proved for  equations with variable infinitely smooth coefficients,
		whereas in \cite{D_12} the stability estimate is proposed only for finite difference operators
		of type
		 $\sum_{i  = 1}^d a_i (x) \delta_{e_i, h}$.
		It is also worth mentioning that the results of \cite{B_73} cover  finite difference equations of order $2m, m \in \bN$ and also  both explicit and implicit schemes.
		Our main theorem in the deterministic case (see Theorem \ref{theorem 4.2})
		 differs from the corresponding result of \cite{B_73} in the fact that we 
		do not require the coefficients  of \eqref{4.1} to be infinitely smooth.
		In fact, they are just bounded measurable in temporal variable 
		and H\"older continuous in the spatial ones.
		In addition, 
		in \cite{B_73} the $l_p$ estimate of the second order differences of the solution of the difference scheme  (see Lemma 6.1 of \cite{B_73})
		 is proved by using a version of Mikhlin multiplier theorem (see Theorem 2.1 of \cite{B_73}).
		In contrast, we use a different argument developed by N.V. Krylov in \cite{Kr_06}.

		There is a number of articles dedicated to stochastic finite difference schemes.
		The $l_2$-theory of stochastic explicit  finite difference schemes  with uniformly nondegenerate leading coefficients
		 has been developed in  \cite{Y_98}. 
		In the same work the author has obtained an estimate of the rate of convergence  of the difference scheme in Hilbert-Sobolev spaces.
		In \cite{GM_09} I. Gy\"ongy and A. Millet have proved a  general result
		   that implies 
		 an estimate of the rate of convergence for both explicit and implicit difference schemes in Hilbert-Sobolev space (see also \cite{H_12}).
		In addition, their result covers the case of quasilinear SPDEs. 
		Further, there has been significant progress 
		recently in the studies of  semi-discrete finite difference schemes in the $l_p$ framework (see \cite{Y_98, GK_11, G_14, GG_15} and references therein).
		The existing results now cover the case when 
		the leading coefficients are possibly degenerate \cite{GG_15}.
		In addition, in  \cite{GK_11, G_14, GG_15}
		 it has been shown how to accelerate semi-discrete numerical schemes.
		 However, the existing articles on the $l_p$-theory
		of semi-discrete equations
		do not address the question of maximal regularity of $l_p$ type.
		Having this result available, one could  obtain the results of the present paper.
		In addition, using this result, under certain conditions, 
		 one can improve the rate of convergence in Theorem \ref{theorem 3.1} (see also Remark \ref{remark 3.3}).
		To the best of this author's knowledge, the present work
		is the first paper where  space-time discrete finite difference schemes of type \eqref{3.1} with nondegenerate leading coefficients
		  are considered in the framework of $l_p$ discrete Sobolev spaces.

		The advantage of the $l_p$-theory over  the $l_2$-theory in the deterministic setting
		 can be demonstrated on the following example.
		Let $n \in \bN$ be a number 
		such that 
		$
			n \geq \lfloor d/2 \rfloor .
		$
		 Consider the following PDE:
		$$
			D_t u (t, x)  =   \sum_{i = 1}^d (a_i (x) D_{i i} u (t, x) + b^i (x) D_i u (t, x)) +  f (t, x), \, u (0, x) = 0, x \in \bR^d.
		$$
		 Assume that $f$ and $g$ are infinitely smooth and compactly supported
		and let $u$ be the unique weak solution of this equation.
		Let $u_{\gamma, h}$ be the solution  of Eq. \eqref{4.1}
		with $d_1 = d, \lambda_k \equiv a_k$, 
		$
		k = 1, \ldots, d.
		$
		In addition, assume that all  the functions 
		$
			\lambda_k (t, x), b^i (t, x), (t, x) \in \bR^{d+1}
		$
		 are continuous with bounded derivatives up to order $n$.
		For a multi-index $\alpha$ by $|\alpha|$ we denote the order of $\alpha$
		and 
		$$
			\delta^{\alpha}_h =  \delta_{e_1, h}^{\alpha_1} \ldots \delta_{e_d, h}^{\alpha_d}. 
		$$	
		Then, by the $l_2$-theory  (see, for example, \cite{Y_98}), for any $T > 0$,
		$$
			\sum_{ |\alpha| \leq n  - \lfloor d/2 \rfloor } \sup_{t = 1, \ldots,  \lfloor T/\gamma \rfloor} \sup_{x \in h \bZ^d} |\delta^{\alpha}_h (u_{\gamma, h} (t, x) - u (t, x))|  = O (h^2).
		$$ 
		In contrast, by taking $p > d$ and applying   Theorem \ref{theorem 4.2}, we obtain
		$$
			\sum_{ |\alpha| \leq n } \sup_{t = 1, \ldots,  \lfloor T/\gamma \rfloor} \sup_{x \in h \bZ^d} |\delta^{\alpha}_h (u_{\gamma, h} (t, x) - u (t, x))|   = O (h^{2}).
		$$
		The major drawback of the stochastic $l_p$-theory lies in the fact that		
		it does not cover the equations containing the  term 
		$
			\sigma^{i l} (t, x) \delta_{e_i, h} v_{\gamma, h} \, (w^l (t + \gamma) - w^l (t)).
		$
		In the case of SPDEs the term $\sigma^{i l} (t, x) D_{i} v \, dw^l (t)$
		is handled by the It\^o-Wentzell formula
		which is not available in the discrete case.

		The key ingredient of the proof of the main result  in the deterministic case
		 is a  discrete counterpart of the parabolic Calderon-Zygmund type estimate (see Theorem \ref{theorem 4.3})
		which is similar to the one given in Theorem 4.3.7 of \cite{Kr_08}.
		 The stochastic $l_p$-theory is based on the discrete version of the  parabolic  Littlewood-Paley inequality (see Theorem \ref{theorem 3.3}).
		The original parabolic Littlewood-Paley has been proved by N.V. Krylov in \cite{Kr_06}.
		To prove Theorem \ref{theorem 3.3} and Theorem \ref{theorem 4.3} we use the method of \cite{Kr_06}
		involving an estimation of the sharp function
		of the second order derivatives of the solution $v_{\gamma, h}$.
		The main problem in the discrete case  is that the discrete heat kernel 
		corresponding to 
		$
			\sum_{k = 1}^{d_1} \lambda_k (t) \Delta_{l_k, h}
		$
		does not have a simple explicit representation.
		Moreover, it lacks certain symmetries
		of a `continuous' heat kernel such as 
		the scaling property and the spherical symmetry. 
		To overcome this issue we establish a local central limit theorem for a discrete heat kernel
		 (see Lemma \ref{lemma 11.2})
		and  use it to derive some pointwise difference estimates of a discrete heat kernel (see Corollary \ref{corollary 11.3})
		 which are similar to the ones of Theorem 2.3.6 of \cite{LL}.
		Our method in the deterministic case is somewhat similar to the one used in \cite{D_12} and in Chapter 4 of \cite{Kr_08}.
		However, instead of exploiting qualitative properties of finite difference equations, 
		we  prefer to use pointwise estimates of a discrete heat kernel.
		It would be interesting to prove Theorem \ref{theorem 4.1} in the spirit of \cite{D_12, Kr_08}
		because this method  enables one to derive the mixed norm estimates at no cost.
		Finally, the techniques of the present paper can be applied to stochastic semi-discrete equations.
		This will be done somewhere else.
		
		This author would like to thank his advisor N.V. Krylov for stimulating discussions, valuable suggestions and attention to this work.

	\mysection{Framework and some notations}
					\label{section 2}

	For two vectors $\xi, \zeta \in \bR^d$
	we denote their scalar product
	by $\xi \cdot \zeta$.
	For $x \in \bR^d$ denote
	$$
		|x| = (\sum_{i = 1}^d |x^i|^2)^{1/2}, 
		\quad ||x|| =  \max_{i =  1, \ldots, d} |x^i|,
	$$
	$$
		D_t = \frac{\partial}{\partial t}, \quad
				 D_i = \frac{\partial}{\partial x^i}, 
							\quad D_{i j}  = \frac{\partial}{\partial x^i x^j}. 
	$$
	 We say that a vector
	$
		\alpha  = (\alpha^1, \ldots, \alpha^d) \in \bZ_{+}^d
	$
	 is a multi-index.
	By $|\alpha| = :\sum_{i = 1}^d \alpha_i$
	we denote the order of this multi-index.
	For two multi-indexes 
	$
		\alpha_1, \alpha_2,
	$
	we write 
	$
	  \alpha_1 \leq \alpha_2
	$
	if 
	$
	  \alpha_1^{i} \leq  \alpha_2^i
	$, 
	  $i = 1, \ldots, d$.
	For any multi-index $\alpha$, 
	denote
	$$
		D^{\alpha}_x = D^{\alpha}_1  \ldots D^{\alpha_d}_d.
	$$
	For any vector $\xi \in \bR^d$,
	 denote
	$$
		D_{(\xi)}  = \sum_{i = 1}^d \xi^i D_i, 
			\quad D_{(\xi) (\xi)}  = \sum_{i, j = 1}^d \xi^i \xi^j D_{i j}. 
	$$

	Fix some number $T > 0$. 
	Throughout this article
	$h \in (0, 1]$ is a number, 
	and
 	$\gamma \in (0, 1]$ is a number such that
	$
		T/\gamma \in \bN.
	$
	Denote
	  $
		t_n = n\gamma, 
	 $
	and recall that 
	$$
		 \gamma \bZ = \{t_n, n \in \bZ\}, \quad
	 \quad
												 h\mathbb{Z}^d = \{x: x =  h y, y  \in   \bZ^d\}.
	$$
	Also recall that, for any function  
	$
		f 
	$
	on 
	$
		h\mathbb{Z}^d
	$
	and any vector $\xi \in \bZ^d$,
	$$
		\delta_{ \xi, h}  f (x) = (f (x + h \xi) - f (x)/h,
		 \quad \Delta_{\xi, h} f (x) = (f (x + h \xi) - 2 f(x) + f (x -h \xi))/h^2.
	$$
	  For any function  $g$ on 
	$
		\gamma \bZ \times h \bZ^d,
	$
	  any vector 
	$\xi \in h \bZ^d$,
	and any multi-index 
	$
		\alpha,
	$
	 denote
	$$
		T_{\xi, h} f (x) = f (x+h \xi), \quad T^{\alpha}_h  = T^{\alpha_1}_{e_1, h} \, \ldots T^{\alpha_d}_{e_d, h},
	$$
	  $$
			\delta^{\alpha}_h = \delta^{\alpha_1}_{e_1, h}  \ldots  \delta^{\alpha_d}_{e_d, h}.
	 $$
		For any two functions $f$ and $g$ defined on $h \bZ^d$
	we denote
	$$
		f \ast_h g (x) = \sum_{y \in  h\bZ^d} f (x-y) g(y).
	$$
	For $p \geq 1$,
	we denote by $l_p ( h \bZ^d)$
	the Banach space of all functions $f$ on $h \bZ^d$
	such that the norm 
	$$
		|| f ||_{ l_p (h \bZ^d) }  = ||f ||_{p; \, h} = ( h^d \sum_{x \in h \bZ^d} |f (x)|^p)^{1/p}
	$$
	is finite.
	For 
	$s \in \bZ_{+}$,
	 and a function $f$ on 
	$ h \bZ^d$ 
	 the Sobolev norm
	 $
		|| \cdot ||_{s, p; \, h}
	$
	 is defined as follows:
	$$
			  [ f ]_{s, p; \,  h} = \sum_{  |\alpha| = s} || \delta^{\alpha}_h f ||_{p; \, h},    
	$$
	$$
						|| f ||_{s, p; \, h} = \sum_{  n = 0}^s [f]_{n, p; \,  h}.
	$$
	For $s, m \in \bZ_{+}$,
	 and a function $f$  on 
	$
		\gamma \bZ  \times h \bZ^d
	$,
	we write  
	$
		f \in \bH^s_{p; \, \gamma, h} (m)
	$
	 if, for any 
	$x \in h \bZ^d$, 
	$
		k \in \{0, 1, \ldots, m\},
	$
	the random variable
	$
		f (t_k, x)$ is $\cF_{t_k}
	$-measurable,
	and
	$$
		|| f ||^p_{\bH^s_{p; \gamma, \,  h} (m)} :=   \gamma E  \sum_{k = 0}^{m} || f (t_k, \cdot)||^p_{s, p; \,  h} < \infty.
	$$

	Next, let
		$
			B = B (\bR^d)
		$
		be the space of bounded Borel functions, 
		$
		  C^k  = C^k (\bR^d), k \in \bN
		$ 
		be the space of  bounded $k$ times differentiable functions with bounded derivatives up to order $k$,
		$
		  C^{\infty}_0 =  C^{\infty}_0 (\bR^d)
		$
		 be the space of infinitely differentiable functions with compact support.
		For $s \in (0, \infty) \setminus \bN$,
		 we denote by 
		$
			 C^{s} (\bR^d)
		$
		    the usual H\"older space,
		and, for 
		$k \in \bZ$,
		by 
		$C^{k, 1}$ we denote the Banach space of 
		 $k$-times continuously differentiable functions
		such that the derivatives of order $k$ are Lipschitz continuous.
		For 
		$
		   s \in \bN
		$,
		 $
			p \in (1, \infty),
		$
			 by
		 $W^s_p$ we denote
		the Sobolev space, 
		i.e. the space of $L_p = L_p (\bR^d)$ functions
		such that all the generalized derivatives up to order $s$ belong to $L_p$.
		 For $s \in \bR$, we define the space of Bessel potentials as follows:
		 $$
			H^{s}_p  : = (1 - \Delta)^{- s/2} L_p.
		$$
		Here $\Delta$ is the Laplacian, and $(1 - \Delta)^{-s/2}$ is understood as a Fourier multiplier. 
		The norm in these spaces is defined as follows:
		$$
			|| f ||_{ H^s_p }  = || (1 -\Delta)^{s/2} f ||_{L_p}. 
		$$
		A detailed discussion of the spaces of Bessel potentials can be found, for example, in \cite{T} and \cite{Kr_08}.
		
		Next, for $T_2 > T_1 \geq 0$, denote
		  $$
			\mathbb{H}^{s}_p (T_1, T_2) := L_p ( \Omega \times [T_1, T_2], \mathcal{P}, H^{s}_p ).
		$$
		We define stochastic Banach spaces
		 $
			\cH^{s}_p (T_1, T_2).
		$
		\begin{definition}
					\label{definition 2.1}
	   	For any 
		$
		p \geq 2, s \in \bR,
		$
		  and numbers $T_2 > T_1 \geq 0$,
		 we write 
		 $
		   u \in \mathcal{H}^{s}_p (T_1, T_2)
		$ 
		if
		\begin{enumerate} 
		  \item $u$ is a distribution-valued process,
			 and 	
		$
			u \in  \mathbb{H}^{s}_p (T_1, T_2)
		$,  
		 \item 	$
					u(0, \cdot) \in L_p (\Omega, \mathcal{F}_0, H^{ s - 2/p }_p)
				$,
		\item there exist
		 $
			f \in \mathbb{H}^{s - 2}_p (T_1, T_2)
		$
		 and 
		 $
		   h^k \in 
	 	    						\mathbb{H}^{s - 1}_{p} (T_1, T_2),
		$ 
		$k=  1, \ldots, d_0$,
		such that, for any 
		$
		   \phi \in C^{\infty}_0,
		$
	 	and  any 
		$
		    t \in [T_1, T_2], \omega \in \Omega,
		$
		\begin{equation}
				\label{2.3}
			\begin{aligned}
			(u (t, \cdot), \phi) = &( u(0, \cdot), \phi) + 
												\int_0^{t}  (f(s, \cdot), \phi) \, ds\\
																			 & + 	\sum_{k = 1}^{d_0} \int_0^{t} (h^k (s, \cdot), \phi) \, dw^k (s).
			\end{aligned}
		\end{equation}
		\end{enumerate}
		The $\cH^s_p (T_1, T_2)$ norm is defined in the following way:
		$$
		    || u ||_{ \cH^{s}_p (T_1, T_2) } :=  || u ||_{\mathbb{H}^{s }_p (T_1, T_2)} 
		$$
		 $$
		       + || f ||_{\mathbb{H}^{s - 2}_p (T_1, T_2) }   +  \sum_{k  = 1}^{d_0} || h^k ||_{\mathbb{H}^{s - 1}_p (T_1, T_2) } +
																				  (E   || u (0, \cdot) ||^p_{  s - 2/p, p} )^{1/p}.
		 $$

		For 
		$
			u \in \cH^{s}_p (T_1, T_2)
		$,
		 we denote 
		 $
			\mathbb{D} u := f
		$,
		 $
			\mathbb{S} u := (h^k, k = 1, \ldots, d_0)
		 $.

		For $p > 1$ we write  
		$
		   u \in \fH^s_p (T_1, T_2)
		$ 
		if $u_0, f$ are independent of $\omega$, and
		$
		  h^k \equiv 0, k = 1, \ldots, d_0.
		$
		\end{definition}
		
			\begin{remark}
						\label{remark 2.1}
			Stochastic Banach spaces have been introduced in \cite{Kr_99}. 
			By Theorem 3.7 of \cite{Kr_99}
			 Definition \ref{definition 2.1}
			is equivalent to  Definition 3.1 of \cite{Kr_99}.
			By the same theorem 
			$
			\cH^s_p (T_1, T_2)
			$ is, indeed, a
			Banach space.
			In addition, by Theorem 7.2 of \cite{Kr_99},
			 for any numbers 
			$
				\mu, \theta
			$
			such that
			$
				1/2 > \mu > \theta > 1/p,
			$
			we have 
			$$
				E || u ||^p_{  C^{\theta - 1/p} ([T_1, T_2], H^{s - 2 \mu}_p) } \leq N (d, p, T_1, T_2, \mu, \theta) || u ||^p_{ \cH^s_p (T_1, T_2) }.
			$$
			Further, by the embedding theorem for $H^s_p$ spaces (see for example, Theorem 13.8.1 of  \cite{Kr_08}),
			if 
			$
				s - 2 \mu  -  d/p > 0,
			$ 
			then, 
			we may assume that,
			 for any $\omega$,
			$$
				u \in C^{\theta - 1/p} ([T_1, T_2], C^{  s - 2 \mu - d/p  }).
			$$
			\end{remark}
		
			\begin{remark}
						\label{remark 2.2}
			Let $p > 1$
			and
			$
				u \in \fH^s_p (T_1, T_2).
			$
			 Then,
			by Theorem 7.3 of \cite{Kr_01},
			for any number $\beta \in  (1/p, 1]$,
			we have
			$$
				 || u ||_{  C^{\beta - 1/p} ([T_1, T_2], H^{s - 2 \beta}_p) } \leq N (d, p, T_1, T_2, \beta) || u ||_{ \fH^s_p (T_1, T_2) }.
			$$
			If  $s - 2 \beta  -  d/p > 0$, then, by the embedding theorem for $H^s_p$ spaces
			$$
				u \in C^{\beta - 1/p} ([T_1, T_2], C^{  s - d/p - 2 \beta}).
			$$
			\end{remark}

		\mysection{Main results in the stochastic case}
									\label{section 3}

	For $n \in \bZ_{+}$ we denote
	\begin{equation}
				\label{3.4.1.0}
		B^{n} = \begin{cases} 
				B, \,  \textit{if} \,\,   n = 0, \\
		C^{  n  -1, 1}, \, n \in \bN.
		\end{cases}
	\end{equation}

		\textbf{\textit{Stability estimates.}}
		Fix some numbers 
		$n \in \bZ_{+}$,
		 $p \geq 2$,  
		$h, \gamma \in (0, 1]$,
		and $T > 0$.

		\begin{assumption}
						\label{assumption 3.1}
		   $
			a (t, x) = (a^{i j}(t, x), i, j = 1, \ldots d)
		    $ 
		are 
		 $
		    \mathcal{P} \times B(\bR^d) -
		$ 
		 measurable functions.
		In addition, there exists 
		$\delta_1, \delta_2 > 0$ such that,
		   for all 
		$
		   t \geq 0, x, \xi \in \bR^d, \omega,
		$
		 	\begin{equation}
						\label{3.2}
				\delta_1 |\xi|^2  \leq a^{i j } (t, x) \xi^i \xi^j \leq \delta_2 |\xi|^2.
		 	\end{equation}
		\end{assumption}

	\begin{assumption}
					\label{assumption 3.2}
	 There exist numbers 
	$d_1 \geq d$,
		$\delta^{*}_1, \delta^{*}_2 > 0$,
	   and a set of  vectors
	$
		\Lambda = \{l_k \in \bZ^d, k = 1, \ldots, d_1\},
	$
	and 
	  $\cP \times B (\bR^d)$-measurable functions 
	$
		\lambda_k, k = 1, \ldots, d_1
	$
	such that the following properties hold:
	
	$(i)$ $l_k = e_k, k = 1, \ldots, d$;

	$(ii)$	 for any $k,\omega, t, x$, 
		 $
			 \lambda_k (t, x) \in  [\delta^{*}_1, \delta^{*}_2];
		$
 
	$(iii)$ for any $i, j, \omega, t, x$,
		$$
			a^{i j} (t, x) =  \sum_{ k = 1}^{d_1}  \lambda_k (t, x) l_k^i   l_k^j;
		$$

	$(iv)$ if  the function $a$ is independent of $x$, then so are $\lambda_k, k = 1, \ldots, d_1$.

	\end{assumption}

	\begin{assumption}
					\label{assumption 3.3}
	For any $\varepsilon > 0$,
	 there exists $\kappa_{\varepsilon} > 0$ such that,
	for any $k, \omega,  t$  and any $x, y$ such that 
	$
		|x - y| < \kappa_{\varepsilon}
	$,
	 one has 
	\begin{equation}
		|\lambda_k (t, x) - \lambda_k (t, y)| < \varepsilon.
	\end{equation}
	\end{assumption}

	\begin{assumption}
					\label{assumption 3.4}
	$
		  b (t, x) =  (b^i (t, x), i = 1, \ldots, d),
	$
	 $
		\nu (t, x) = (\nu^l (t, x), l = 1, \ldots, d_0)
	$
		are $\cP \times \bR^d$-measurable functions.
	\end{assumption}
	
	\begin{assumption}
					\label{assumption 3.4.1}
	 There exists a constant $K_1 > 0$ such that,
	for any 
	$
		k, i, l,
	$
	and any
	 $
		\omega, t, x,
	$
	$$
		 ||\lambda_k (t, \cdot)||_{  B^n }
								 + ||b^i (t, \cdot)||_{  B^n  } + ||\nu^l (t, \cdot) ||_{  B^{n + 1}  } < K_1.
	 $$
	\end{assumption}

	\begin{assumption}
					\label{assumption 3.8}
	Fix some number 
	$
		\rho \in (0, (2d_1 \delta_2^{*})^{-1})
	$
	and assume that
	$
		\gamma \leq \rho h^2
	$,
	and
	$
		T/\gamma \geq 2
	$ 
	is an integer.
	\end{assumption}

	\begin{remark}
				\label{remark 3.1}
	Let $S_{\delta_1, \delta_2}$ be the set of symmetric $d\times d$ matrices such that, 
	for any 
	$a \in S_{\delta_1, \delta_2}$,
	 and 
	$\xi \in \bR^d$,
	\begin{equation}
				\label{1.0}
		\delta_1 |\xi|^2 \leq a^{i j} \xi^i \xi^j \leq \delta_2 |\xi|^2.
	\end{equation}
	Let 
	$B (S_{\delta_1, \delta_2})$ 
	be the sigma-algebra of Borel sets of $S_{\delta_1, \delta_2}$.

	We set 
	$
		l_i = e_i, i = 1, \ldots, d.
	$
	It turns out that there exists  a number $d_1 > d$ and
	\begin{itemize}
	\item	 vectors 
	$
		l_k \in \bZ^d, k = d+1, \ldots, d_1
	$,\\
	
	\item real-analytic functions 
	$
		\lambda_k (a), k = 1, \ldots, d_1
	$
	 on
	 $
		S_{\delta_1, \delta_2}
	$,\\
 
	\item numbers 
	$
		\delta^{*}_1, \delta^{*}_2 > 0,
	$
	 depending only on 
	$d, \delta_1, \delta_2$,
	\end{itemize}
 		 such that, for any 
	$
		a \in S_{\delta_1, \delta_2}
	$,   
	$$
		a  =  \sum_{k = 1}^{d_1} \lambda_k (a) l_k l^{*}_k,
															\quad \lambda_k \in [\delta^{*}_1, \delta^{*}_2].
	$$
	The proof of this assertion can be found in Appendix A of \cite{Kr_18}.	

	This implies that, 
	if for each $i, j$,
	the function $a^{i j}$ is a 
	$
		\cP \times B (\bR^{d})
	$-measurable, 
	then
	Assumption \ref{assumption 3.2}  holds with some
	$d_1$, 
	and  
	$
		l_k, \lambda_k, k  =  1, \ldots, d_1
	$.
	Further, 
	by the same theorem, 
	if, for all $i, j$,
	$
		a^{i j} (t, \cdot) \in B^{s},
	$
	then the same holds for all functions
	$
		\lambda_k (t, \cdot).
	$
	\end{remark}

	Here is the main result of this paper.
	
	\begin{theorem}
				\label{theorem 3.2}
	Let 
	$
	  n \in \bZ_{+},
	$
	 $p \geq 2$,
	$T > 0$, 
	and 
	$d_0 \in \bN$
	be numbers.
	Let 
	$
		f \in \bH^n_{p; \, \gamma, h} (T/\gamma),
	$
	 $ 
		g \in \bH^{n + 1}_{p; \, \gamma, h} (T/\gamma).
	$
	In addition, assume that 
	$
		f (t_m, x)$ is an $\cF_{t_{m+1}}
	$-measurable random variable, 
	for any 
	$m \in \gamma\bZ_{+}$ 
	and $x \in h \bZ^d$.
	Let Assumptions \ref{assumption 3.1} -  \ref{assumption 3.8}  hold
	and let $v_{\gamma, h}$ be the solution of  \eqref{3.1} with $u_0 \equiv 0$.
	Then, 
	\begin{equation}
					\label{3.5}
		E  \sum_{r = 0}^{ T/\gamma}  || v_{\gamma, h}  (t_r, \cdot)||^p_{ n + 2, p; \,  h} 
															 				\leq N \fR_n, 
	\end{equation}
	where
	$$
		\fR_n   =  E \sum_{r = 0}^{ (T/\gamma) - 1} (|| f (t_r, \cdot) ||^p_{ n, p; \, h}
	 		 + \sum_{l = 1}^{d_0} || g^l (t_r, \cdot)||^p_{ n + 1, p; \, h}),
	$$
	and the constant $N$
	depends only on 
	$p$, 
	$
	   d, d_0,  d_1,
	$
	 $
		\delta^{*}_1, \delta^{*}_2
	$,
	 $n, K_1$, 
	$
		\Lambda, \rho
	$,
	and
	 $
		T.
	$
	In addition, 
	\begin{equation}
				\label{3.5.1}
		E  \max_{r = 0, \ldots,  T/\gamma}   || v_{\gamma, h} (t_r, \cdot) ||^p_{ n + 1, p; \, h} \leq N  h^2 \mathfrak{R}_n.
	\end{equation}
	\end{theorem}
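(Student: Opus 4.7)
The plan follows Krylov's strategy for the $L_p$-theory of SPDEs adapted to the discrete setting, with the discrete parabolic Littlewood--Paley inequality (Theorem \ref{theorem 3.3}, already stated above) playing the role of the model maximal-regularity estimate. I would first treat the base case $n=0$ with $b^i \equiv 0$, $\nu^l \equiv 0$, and $\lambda_k = \lambda_k(t)$ independent of $x$. Applying $\Delta_{l_k, h}$ to \eqref{3.1} yields a scheme of the same type with forcing $\Delta_{l_k, h} f$ and stochastic forcing $\Delta_{l_k, h} g^l$; Theorem \ref{theorem 3.3} then provides $E \sum_r \|\Delta_{l_k, h} v_{\gamma, h}(t_r, \cdot)\|^p_{p;h} \leq N \mathfrak{R}_0$. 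Summing over $k$ and using $l_1, \ldots, l_d = e_1, \ldots, e_d$ from Assumption \ref{assumption 3.2}(i) controls all pure second differences and hence $\|v_{\gamma, h}\|_{2, p; h}$.

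To pass to variable $\lambda_k(t, x)$, I would freeze coefficients via a smooth partition of unity $\{\zeta_m\}$ subordinate to balls of diameter $\kappa_\varepsilon$ supplied by Assumption \ref{assumption 3.3}. Each localization $\zeta_m v_{\gamma, h}$ satisfies a scheme with $\lambda_k(t, x_m)$ frozen at a representative point $x_m$, up to an oscillation term of size $O(\varepsilon)$ times the principal part and discrete-Leibniz cross terms involving only first differences of $v_{\gamma, h}$. Applying the constant-coefficient bound to each piece, summing $\ell_p$-wise in $m$, choosing $\varepsilon$ small enough to absorb the oscillation on the left, and using discrete Gronwall to absorb the cross terms together with the lower-order drift $b^i \delta_{e_i, h} v_{\gamma, h}$ and noise coefficient $\nu^l v_{\gamma, h}$ contributions (handled by Young's inequality with a small parameter in front of $\|v_{\gamma, h}\|^p_{2, p; h}$) yields \eqref{3.5} for $n = 0$. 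For $n \geq 1$, I would apply $\delta^\alpha_h$, $|\alpha| \leq n$, to the equation, use Assumption \ref{assumption 3.4.1} to bound the commutators with variable coefficients, and iterate.

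For the uniform-in-time estimate \eqref{3.5.1}, I would telescope: for $|\alpha| \leq n+1$, write $\delta^\alpha_h v_{\gamma, h}(t_r, \cdot)$ as the sum over $s < r$ of the one-step increments from \eqref{3.1}. Taking $\ell_p$-norm, maximum over $r$, applying Jensen to the deterministic sum and the discrete Burkholder--Davis--Gundy inequality to the martingale part, and invoking the already-established \eqref{3.5} to bound the internal $(n+2)$-order differences of $v_{\gamma, h}$ produces a bound with a $\gamma$ prefactor; since $\gamma \leq \rho h^2$, this delivers the asserted $h^2$.

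The main obstacle will be the freezing step: unlike the continuous case, the discrete product formula $\Delta_{l_k, h}(\zeta_m v_{\gamma, h})$ does not split cleanly via Leibniz but produces cross terms shifted by $\pm h l_k$. I would have to verify that these shifted terms are genuinely lower-order uniformly in $h$, keep careful track of how the supports of $\zeta_m$ interact with the shifts $\pm h l_k$, and ensure that the $O(\varepsilon)$ oscillation contribution from the frozen principal part can be absorbed back after the sum over $m$ in $\ell_p$, independently of $h$ and $\gamma$.
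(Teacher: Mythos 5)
There are two genuine gaps. First, your base case relies on Theorem \ref{theorem 3.3} alone, but that inequality only controls the stochastic convolution (the square function of $\delta_{e_k,1}G\ast g$); the deterministic forcing $f$ in \eqref{3.1} is not touched by it and requires the discrete Calderon--Zygmund estimate (Theorem \ref{theorem 4.3}, used through Theorem \ref{theorem 4.4}), which never appears in your plan. Moreover, in Theorem \ref{theorem 3.2} the coefficients $\lambda_k(t)$ are merely predictable, hence in general random, while Theorems \ref{theorem 3.3}--\ref{theorem 3.4} require $a$ independent of $\omega$: if $G(j,n,\cdot)$ depends on $\lambda_k$ on $(j,n]$, the integrand in the stochastic-integral rewriting is no longer adapted and the Burkholder--Davis--Gundy step collapses. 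The paper circumvents exactly this by splitting $v_{\gamma,h}=v^{(1)}_{\gamma,h}+v^{(2)}_{\gamma,h}$, where $v^{(1)}_{\gamma,h}$ solves the scheme with the constant, deterministic operator $\sum_i\Delta_{e_i,h}$ and carries the noise (Theorem \ref{theorem 3.4} applies), while $v^{(2)}_{\gamma,h}$ solves, for each fixed $\omega$, a purely deterministic scheme with the random $\lambda_k(t)$ and forcing $f+\lambda_k\Delta_{l_k,h}v^{(1)}_{\gamma,h}-\Delta_{e_i,h}v^{(1)}_{\gamma,h}$ (Theorem \ref{theorem 4.4} applies pathwise). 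Without this device (or a substitute), your constant-in-$x$ step does not go through. Your freezing step and the treatment of lower-order terms are in the right spirit (the paper freezes by integrating against translates of a single bump $\zeta(\cdot-y)$ rather than a partition of unity, and controls $\|v_{\gamma,h}\|_{p;h}$ separately via Lemma \ref{lemma 9.4} and Gronwall before interpolation), but these are repairable details.

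Second, your argument for \eqref{3.5.1} does not produce the factor $h^2$. Telescoping $\delta^\alpha_h v_{\gamma,h}$ with $|\alpha|\le n+1$ makes the drift increments contain $\gamma\,\delta^\alpha_h\Delta_{l_k,h}v_{\gamma,h}$ (differences of order $n+3$, not $n+2$) and $\gamma\,\delta^\alpha_h f$ (order $n+1$, while $f$ is only in $\bH^n$). Estimating the extra difference crudely by $2/h$ and applying Jensen over the $T/\gamma$ time steps gives a prefactor $T^{p-1}\gamma h^{-p}\le N h^{2-p}$, which diverges for $p>2$; estimating instead $\gamma\Delta_{l_k,h}$ by $4\rho$ loses the small factor altogether. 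The paper's Lemma \ref{lemma 9.4} $(ii)$ is precisely the missing ingredient: one writes the Duhamel representation with respect to the constant-coefficient discrete heat kernel and uses the decay $\sum_x|\delta_{e_i,1}G(j,n,x)|\le N(n-j)^{-1/2}$ (Lemma \ref{lemma 11.5}) so that gaining one difference order costs only $(\gamma/h)^p(T/\gamma)^{p/2-1}\le N\gamma\le Nh^2$ after H\"older in $j$; the term $[\Delta_{l_k,h}v_{\gamma,h}]^p_{n,p;h}$ it leaves behind is then absorbed by the already proved \eqref{3.5}. Your BDG treatment of the martingale part is fine, but the deterministic part needs this kernel-smoothing argument, not plain telescoping.
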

	
	\begin{remark}
	In the proof of Theorem \ref{theorem 3.1}
	 we  use the stability estimates \eqref{3.5.0} and \eqref{3.5.1}
	 for an equation of type \eqref{3.1}  with 
	$f (t, x)$ replaced with
	 $f (t+\gamma, x)$.
	This is the reason why we need 
	$f (t, x)$ to be 
	$\cF_{t+\gamma}$-measurable in the statement of Theorem \ref{theorem 3.2}.
	Note that, under assumptions of Theorem \ref{theorem 3.2},
	$
		v_{\gamma, h} (t, x)
	$
	 is  an $\cF_t$-measurable random variable, 
	for any $t \in \gamma\bZ_{+}$, 
	$x \in h \bZ^d$.
	\end{remark}

	\textbf{\textit{Rate of convergence.}}
	We fix some numbers 
	$
		n \in \bZ_{+}, s \in (0, \infty)\setminus \bN
	$,
	$s_1 > 0$, 
					 $p > 2$, 
							$\theta \in (1/p, 1/2)$,
		 										$\eta \in (0, 1)$,
																and $T > 0$.	

		In this paper we consider the following SPDE:
	\begin{equation} 
	 \begin{aligned}
				\label{1.2}
		dv (t, x)& = [ \sum_{i, j = 1}^d a^{i j} (t, x)  D_{i j} v (t, x) + \sum_{i = 1}^d  b^i (t, x) D_i v (t, x) + f(t, x)] \, dt\\
													&+ \sum_{l = 1}^{d_0} [\nu^l (t, x) v (t, x) + g^l (t, x)] \, dw^l (t), \quad v (0, x)  = u_0 (x).
	   \end{aligned}
	\end{equation}	
	In the sequel, where it is clear what is the range of the index of summation,
	we  omit  writing the  symbol $\sum$.

		\begin{definition}
		We say that $v \in \cH^s_p (0, T)$ is a solution of \eqref{1.2}
		if 
		$$
			\mathbb{D} v = a^{i j} (t, x)  D_{i j} v (t, x) + b^i (t, x) D_i v (t, x) + f(t, x),
		$$
	 	 $$
			\mathbb{S} v = 	(\nu^l (t, x) u (t, x) + g^l (t, x), l = 1, \ldots, d_0),									
		 $$
		and $v(0, x) = u_0 (x)$.

		\end{definition}

	\begin{assumption}
					\label{assumption 3.4.2}
	There exists a number $K_2 > 0$ such that,
	for
	 any 
	$
		i, j, l, \omega, t
	$, 
	 $$
		||a^{i j} (t, \cdot)||_{ C^{s+1+\eta}} 
								 + ||b^i (t, \cdot)||_{ C^{s+1+\eta} } + ||\nu^l  (t, \cdot) ||_{ C^{s+2+\eta} } < K_2,
	 $$
	 where 
		$\eta \in  (0, 1)$ is a  number 
	such that
	$s + \eta \not \in \bN$. 
	\end{assumption}

	\begin{assumption}
					\label{assumption 3.5}
	
		There exists a number $K_3 > 0$ such that, 
		for any 
		$
			i, j,  l, x, y,\omega, t,
		$
		$$
			||a^{i j}||_{  C^{ \theta - 1/p } ([0, T], C^{n - 1, 1}) }   +  || b^i ||_{  C^{ \theta - 1/p } ([0, T], C^{n - 1, 1}) } + ||\nu^l||_{  C^{ \theta - 1/p } ([0, T], C^{s+2}) } < K_3.
		$$
	\end{assumption}

	\begin{assumption}
					\label{assumption 3.6}
				$
			      	 u_0 \in L_p (\Omega, \cF_0, H^{s+3 - 2/p}_p),
			        $
				 $
					f  \in \bH^{s+1}_p ([0, T]), 
				 $
			  	  $
					g \in \bH^{s + 2}_p ([0, T]).
				  $
	\end{assumption}

				\begin{assumption}
							\label{assumption 3.7}
				$
					 E ||f||^p_{C^{\theta - 1/p} ([0, T],  H^{s_1}_p)} 
					 +  \sum_{ l = 1}^{d_0} E ||g^l||^p_{  C^{\theta - 1/p} ([0, T],  H^{s+2}_p)} <\infty.
				$
	\end{assumption}

	\begin{remark}
				\label{remark 3.3}
	Even if we require more regularity of $f$ and $g$ in the temporal variable,
	by using our methods,
	 we will still get the same rate of convergence in the aforementioned theorem.
	Note that the power $\theta   - 1/p$ in the inequality \eqref{3.3}
	comes from the estimate of the modulus of continuity of a function 
	from a stochastic Banach space (see Remark \ref{remark 2.1}).
	This author believes that one can get the rate of convergence of order $h^p$ in Theorem \ref{theorem 3.1}
	by establishing and using the $l_p$-theory of semi-discrete stochastic equations.
	\end{remark}

	The following theorem is a particular case of Theorem 5.1 of \cite{Kr_99}.

	\begin{theorem}
				\label{theorem 3.5}
	Let Assumptions \ref{assumption 3.1}, 
	\ref{assumption 3.4},
	 \ref{assumption 3.4.2}, \ref{assumption 3.6} hold. 
	Then, by Theorem 5.1 of \cite{Kr_99} 
	there exists a unique solution 
	$
		v \in \cH^{s+3}_p ([0, T])
	$
	of \eqref{1.2}, and  
	$$
		|| v ||_{\cH^{s+3}_p (0, T) } 
			\leq N (|| f ||_{\bH^{s+1}_p (0, T)} + \sum_{l = 1}^{d_0} ||g^l ||_{\bH^{s+2}_p (0, T)} + (E ||u_0||^p_{s+3 - 2/p, p})^{1/p}),
	$$
	where $N$ depends only on 
	$
		p, s, d, d_0, \delta_1, \delta_2, \eta,  K_2
	$
	 and $T$.
	
	\end{theorem}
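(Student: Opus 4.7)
The plan is to realize Theorem \ref{theorem 3.5} as a direct specialization of Theorem 5.1 of \cite{Kr_99} by verifying that Assumptions \ref{assumption 3.1}, \ref{assumption 3.4}, \ref{assumption 3.4.2}, and \ref{assumption 3.6} imply the hypotheses of that theorem for the solution class $\cH^{s+3}_p(0,T)$. Since the theorem statement itself already identifies the source, the real task is to check that the smoothness indices line up and that the right multiplier estimates are available.

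First I would recall the abstract setup of \cite{Kr_99}: given a solution $v$ in $\cH^{\sigma}_p$, the drift $\bH^{\sigma-2}_p$-valued part $\bH v$ is $a^{ij}D_{ij}v + b^i D_i v + f$ and the diffusion $\bH^{\sigma-1}_p$-valued part $\mathbb{S}v$ is $(\nu^l v + g^l)_l$, so I need $a^{ij}$ and $b^i$ to be pointwise multipliers on $H^{s+1}_p$, $\nu^l$ to be a pointwise multiplier on $H^{s+2}_p$, and the free terms/initial data to be controlled in $\bH^{s+1}_p$, $\bH^{s+2}_p$, and $L_p(\Omega, \cF_0, H^{s+3-2/p}_p)$. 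Assumption \ref{assumption 3.1} is the uniform ellipticity required in \cite{Kr_99}. The multiplier bounds follow from the H\"older regularity in Assumption \ref{assumption 3.4.2}: because $s+1+\eta$ strictly exceeds $s+1$, standard multiplier results for Bessel-potential spaces (see, e.g., Chapter 13 of \cite{Kr_08}) give the action of $a^{ij}$ and $b^i$ on $H^{s+1}_p$; similarly $\nu^l \in C^{s+2+\eta}$ acts on $H^{s+2}_p$. Assumption \ref{assumption 3.6} supplies the data in the correct classes verbatim.

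With these verifications the existence of a unique $v \in \cH^{s+3}_p(0,T)$ and the a priori bound
$$
\|v\|_{\cH^{s+3}_p(0,T)} \leq N\bigl(\|f\|_{\bH^{s+1}_p(0,T)} + \sum_{l=1}^{d_0}\|g^l\|_{\bH^{s+2}_p(0,T)} + (E\|u_0\|^p_{s+3-2/p,p})^{1/p}\bigr)
$$
are immediate consequences of Theorem 5.1 of \cite{Kr_99}, with the constant $N$ depending only on the structural quantities listed. No genuine mathematical obstacle is expected; the only delicate point is bookkeeping between the shifted regularity scale (centered at $s+3$ rather than the generic $2$) and the indices used in \cite{Kr_99}, together with observing that Assumption \ref{assumption 3.5} on time regularity of coefficients, which will be needed for the convergence rate in Theorem \ref{theorem 3.1}, plays no role here and therefore need not be invoked.
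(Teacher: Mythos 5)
Your proposal is correct and matches the paper's treatment: the paper gives no separate argument but simply invokes Theorem 5.1 of \cite{Kr_99}, of which Theorem \ref{theorem 3.5} is a particular case, and your verification that Assumptions \ref{assumption 3.1}, \ref{assumption 3.4}, \ref{assumption 3.4.2}, \ref{assumption 3.6} supply exactly the ellipticity, multiplier, and data hypotheses needed there (with the scale shifted to $s+3$) is the intended bookkeeping.
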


	\begin{corollary}
				\label{corollary 3.1}
	It follows from Remark \ref{remark 2.1} that $v$ is a classical 
	solution of Eq. \eqref{1.2} i.e., for all $t \in [0, T]$, 
	$v (t, \cdot)$ is a  twice continuously differentiable function, 
	and Eq. \eqref{1.2} is satisfied not in the weak sense,
	but, for all $\omega, t, x$.

	\end{corollary}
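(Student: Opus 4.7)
The plan is to unpack Remark \ref{remark 2.1} for the solution $v \in \cH^{s+3}_p([0,T])$ produced by Theorem \ref{theorem 3.5} and combine it with the Sobolev embedding to upgrade from a distribution-valued process to a twice continuously differentiable one. Concretely, I would pick $\mu, \theta$ with $1/2 > \mu > \theta > 1/p$ and $\mu$ sufficiently close to $1/p$ so that the regularity exponent $s+3-2\mu-d/p$ exceeds $2$ and is not an integer (this is where one uses $s \in (0,\infty)\setminus\bN$, $p>2$, and, if needed, enlarges $p$; under the standing assumptions the window $(1/p, 1/2)$ for $\mu$ is nonempty and the arithmetic $s+1-d/p > 2\mu$ can be arranged). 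Remark \ref{remark 2.1} then yields
\begin{equation*}
v \in C^{\theta-1/p}\bigl([0,T], H^{s+3-2\mu}_p\bigr) \quad \text{a.s.},
\end{equation*}
and the embedding $H^{s+3-2\mu}_p \hookrightarrow C^{s+3-2\mu-d/p}$ promotes this to
\begin{equation*}
v \in C^{\theta-1/p}\bigl([0,T], C^{s+3-2\mu-d/p}\bigr) \quad \text{a.s.},
\end{equation*}
so that for each $\omega$ and $t$, $v(t,\cdot)$ is a $C^2$ function whose pointwise first and second derivatives coincide with the distributional ones.

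Next, since by the definition of solution one has $\mathbb{D}v = a^{ij}D_{ij}v + b^i D_i v + f$ and $\mathbb{S}v = (\nu^l v + g^l)$ understood as distributions, the $C^2$ regularity just obtained implies that these right-hand sides are in fact continuous functions of $x$ for each $(\omega,t)$ and agree pointwise with the classical expressions. Inserting this into identity \eqref{2.3} of Definition \ref{definition 2.1}, we get, for every test function $\phi \in C^\infty_0$,
\begin{equation*}
(v(t,\cdot),\phi) = (u_0,\phi) + \int_0^t (a^{ij}D_{ij}v + b^i D_i v + f, \phi)\,ds + \sum_{l=1}^{d_0} \int_0^t (\nu^l v + g^l, \phi)\,dw^l(s).
\end{equation*}
A standard localization argument (taking $\phi$ to be a mollifier concentrated at $x$ and passing to the limit, using the continuity of all integrands in $x$ together with the stochastic Fubini theorem for the It\^o integral) then removes the test function and yields the pointwise identity
\begin{equation*}
v(t,x) = u_0(x) + \int_0^t \bigl(a^{ij}D_{ij}v + b^i D_i v + f\bigr)(s,x)\,ds + \sum_{l=1}^{d_0}\int_0^t \bigl(\nu^l v + g^l\bigr)(s,x)\,dw^l(s)
\end{equation*}
for each fixed $x$, outside a common null set (taken from a countable dense set of $x$ and extended by continuity in $x$). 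Differentiating in $t$ in the usual stochastic sense gives the SPDE \eqref{1.2} pointwise.

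The only genuine obstacle is the bookkeeping for the embedding exponent: one must verify that the triple $(s, p, d)$ together with the allowed range of $\mu$ leaves enough room for $s + 3 - 2\mu - d/p > 2$ with $s + 3 - 2\mu - d/p \notin \bN$. This is a routine check under the assumptions of the corollary (in particular, $p$ can be taken large relative to $d$ without weakening the thesis, since higher integrability preserves membership in $\cH^{s+3}_p$ after multiplication by a cutoff, or one uses the non-integer choice of $\mu$ directly). Once this arithmetic is settled, the passage from the weak form \eqref{2.3} to the classical form is automatic from the continuity of the ingredients.
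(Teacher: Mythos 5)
Your argument is essentially the paper's: the corollary is, in the paper, merely a one-line citation of Remark \ref{remark 2.1}, and what you have done is unpack that citation --- Krylov's modulus-of-continuity estimate to get $v \in C^{\theta-1/p}([0,T], H^{s+3-2\mu}_p)$, the embedding $H^{s+3-2\mu}_p \hookrightarrow C^{s+3-2\mu-d/p}$, and then the routine passage from the weak identity \eqref{2.3} to the pointwise one by continuity in $x$ and a dense-countable-set argument. One small caveat: the aside about ``enlarging $p$'' via cutoffs to preserve membership in $\cH^{s+3}_p$ is not really available (increasing $p$ changes the hypotheses of Theorem \ref{theorem 3.5}), but that remark is not load-bearing; the needed inequality $s+3-2\mu-d/p>2$ is simply part of the standing arithmetic in the surrounding theorems where the corollary is used.
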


	As an application of  Theorem \ref{theorem 3.2} we provide an estimate of the rate of convergence of the finite difference  scheme \eqref{3.1}.
	\begin{theorem}
				\label{theorem 3.1}
	Let
	$p > d \vee 2$,
				$ T > 0$, 
						$n \in \bZ_{+}$,
						$s_1 > n + d/p$,
									$\eta \in (0, 1)$ be numbers.
	Let
	$\mu$ and $\theta$ be constants such that
	$
		1/2 > \mu > \theta > 1/p,
	$ 
	and
	 $s$
	be a number such that 
	$
		s \in (n + p/d + 2\mu, \infty)\setminus \bN.
	$
	Let Assumptions \ref{assumption 3.1} - \ref{assumption 3.7} hold,
	and 
	$
		v \in \cH^{s+3}_p (0, T)
	$
	 be the unique solution of \eqref{1.2}
	 (see   
		Theorem \ref{theorem 3.5}),
	and $v_{\gamma, h}$ be the solution of \eqref{3.1}.
	Then, 
	 \begin{equation}
				\label{3.3} 
		\gamma E  \sum_{m = 0}^{T/\gamma} || v (t_m, \cdot) - v_{\gamma, h} (t_m, \cdot)||^p_{ n+2, p; \, h} 
																							\leq N h^{ 2 p (\theta  - 1/p)} R,
	\end{equation}
	where
	\begin{align*}
		R = &     (|| f  ||^p_{ \mathbb{H}^{s+1}_p (0, T) } 
	+  \sum_{l = 1}^{d_0} ||g^l ||^p_{\mathbb{H}^{s+2}_p (0, T)} 
		+  E  ||f||^p_{C^{\theta  -  1/p}([0, T], H^{s_1}_p)} \\
		& +    \sum_{l = 1}^{d_0} E ||g^l||^p_{C^{\theta -  1/p}([0, T], H^{s+2}_p)}
		+ \gamma  \sum_{l = 1}^{d_0}   \sum_{m = 0}^{(T/\gamma) - 1}  
												E	|| g^l (t_m, \cdot) ||^p_{n+2, p; \, h}),
	\end{align*}
	and $N$  depends only on 
	$p$,
	 $d, d_0,  d_1,$ 
	$\delta_1, \delta_2$,
	$\delta^{*}_1, \delta^{*}_2$,
	 $ n,  s, s_1, \eta$, 
	$
		\Lambda, \rho,
	$
	$
		K_1, K_2, K_3,
	$ 
	$
		\mu, \theta,
	$
	and $T$.
	In addition, 
	$$
		E  \max_{m = 0, \ldots, T/\gamma} \sup_{x \in h \bZ^d} \sum_{|\alpha| \leq n}  | \delta^{\alpha} (v (t_m, x) - v_h (t_m, x))|^p  \leq N h^{2 p (\theta  - 1/p)} R.																		
	$$
	\end{theorem}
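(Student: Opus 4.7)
The plan is to apply both estimates \eqref{3.5} and \eqref{3.5.1} of Theorem~\ref{theorem 3.2} to the lattice error $w_{\gamma,h}(t_m,x):=v(t_m,x)-v_{\gamma,h}(t_m,x)$. Integrating the SPDE~\eqref{1.2} over $[t_m,t_{m+1}]$ and rearranging shows that the restriction of $v$ to $\gamma\bZ_{+}\times h\bZ^d$ satisfies a discrete equation of the form~\eqref{3.1} up to an $\cF_{t_{m+1}}$-measurable residual $\hat f(t_m,x)-f(t_m,x)$:
\begin{align*}
v(t_{m+1})-v(t_m)&=\gamma\lambda_k(t_m)\Delta_{l_k,h}v(t_m)+\gamma b^i(t_m)\delta_{e_i,h}v(t_m)+\gamma\hat f(t_m)\\
&\quad+\sum_{l}(g^l(t_m)+\nu^l(t_m)v(t_m))(w^l(t_{m+1})-w^l(t_m)).
\end{align*}
This is precisely why Theorem~\ref{theorem 3.2} is formulated with $\cF_{t_{m+1}}$-measurability of the $f$-slot, as flagged in the remark following it. Subtracting \eqref{3.1} from this identity, $w_{\gamma,h}$ solves an equation of type \eqref{3.1} with vanishing initial data, the same $\lambda_k,b^i,\nu^l$, zero additive noise coefficient, and forcing $\hat f-f$, so \eqref{3.5} immediately yields
$$
\gamma E\sum_m\|w_{\gamma,h}(t_m,\cdot)\|^p_{n+2,p;\,h}\leq N\,\gamma E\sum_m\|(\hat f-f)(t_m,\cdot)\|^p_{n,p;\,h},
$$
with the analogous consequence of \eqref{3.5.1} producing the maximum-in-$m$ estimate.

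\noindent\textbf{Decomposition and estimation of the residual.}
The quantity $\gamma(\hat f-f)(t_m,x)$ splits naturally into three pieces: (i) a spatial consistency error $\gamma\sum_k\lambda_k(t_m)[D_{(l_k)(l_k)}v(t_m,x)-\Delta_{l_k,h}v(t_m,x)]$ together with its drift analog, which by Taylor expansion is pointwise of size $O(h^2)$ provided $v(t_m,\cdot)\in C^{n+4}$; (ii) a deterministic temporal consistency error of the shape $\int_{t_m}^{t_{m+1}}\{[a^{ij}(s)D_{ij}v(s)+b^i(s)D_iv(s)+f(s)]-[\textrm{same at }t_m]\}\,ds$, controlled by the time-H\"older regularity of $v$ (Remark~\ref{remark 2.1}, exponent $\theta-1/p$ in $H^{s+3-2\mu}_p$), of the coefficients (Assumption~\ref{assumption 3.5}), and of $f$ (Assumption~\ref{assumption 3.7}), each contributing a pointwise factor $\gamma^{\theta-1/p}\sim h^{2(\theta-1/p)}$; and (iii) a stochastic remainder $\Xi_m(x):=\sum_l\int_{t_m}^{t_{m+1}}\{(\nu^lv+g^l)(s,x)-(\nu^lv+g^l)(t_m,x)\}\,dw^l(s)$ arising from replacing the It\^o integral by the frozen-coefficient Brownian increment. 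For pieces~(i)--(ii), after passing from the continuous norms of $v$ furnished by Theorem~\ref{theorem 3.5} to the discrete $l_p$-Sobolev norms via the standard embedding $H^{\sigma}_p(\bR^d)\hookrightarrow l_p(h\bZ^d)$ for $\sigma>d/p$ (uniformly in $h$), raising to the $p$-th power and summing in $m$ produce a bound of the target order $h^{2p(\theta-1/p)}R$. The requirement $s>n+d/p+2\mu$ is exactly what is needed so that the H\"older-in-time space of Remark~\ref{remark 2.1} embeds into $C^{\theta-1/p}([0,T],C^{n+2})$, ensuring uniform-in-$h$ control of the pointwise and discrete $l_p$-Sobolev norms of $\delta^\alpha v$ on the lattice.

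\noindent\textbf{The main obstacle.}
The delicate point is (iii): a naive BDG bound only gives $E\|\Xi_m\|^p_{n,p;\,h}\leq N\gamma^{p/2+p(\theta-1/p)}R$, so merely absorbing $\Xi_m/\gamma$ into the $\cF_{t_{m+1}}$-measurable $f$-slot overshoots the target by a factor $\gamma^{-p/2}$ after summing in $m$. The remedy is to decompose each $\Xi_m$ into a predictable piece of the form $G^l(t_m,\cdot)(w^l(t_{m+1})-w^l(t_m))$, with $G^l$ obtained by the predictable $L^2(\Omega)$-projection supplemented by an It\^o expansion of $v(s)-v(t_m)$ through~\eqref{1.2}, plus a genuinely higher-order martingale remainder. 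The predictable piece then enters the $g$-slot of Theorem~\ref{theorem 3.2}, whose implicit $\sqrt{\gamma}$-scaling through the Brownian increment restores the missing power; the higher-order remainder, now of It\^o size $O(\gamma^{1+(\theta-1/p)})$ in $L^p(\Omega)$, is safely absorbed into the $f$-slot. This also explains why the right-hand side $R$ of~\eqref{3.3} carries the extra term $\gamma\sum_l\sum_m E\|g^l(t_m,\cdot)\|^p_{n+2,p;\,h}$ one derivative above the natural level, and why Assumption~\ref{assumption 3.7} demands $g^l$ to be H\"older in time with values in $H^{s+2}_p$. Finally, the uniform-in-$(m,x)$ estimate in the last display of Theorem~\ref{theorem 3.1} follows from~\eqref{3.5.1} combined with the discrete Sobolev-type embedding $\sup_{x\in h\bZ^d}\sum_{|\alpha|\leq n}|\delta^\alpha f(x)|^p\leq N\|f\|^p_{n',p;\,h}$ valid for $n'>n+d/p$.
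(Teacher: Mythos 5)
You correctly identify the crux of the matter: the stochastic residual
$\Xi_m(x)=\sum_l\int_{t_m}^{t_{m+1}}\{(\nu^l v+g^l)(s,x)-(\nu^l v+g^l)(t_m,x)\}\,dw^l(s)$
is too large to be absorbed into the $f$-slot of Theorem~\ref{theorem 3.2}. However, the remedy you propose does not work. After an It\^o expansion of $v(s)-v(t_m)$ through \eqref{1.2}, the dominant contribution to $\Xi_m$ is a second-chaos iterated stochastic integral, schematically
$\nu^l(t_m,x)(\nu^{l'}v+g^{l'})(t_m,x)\int_{t_m}^{t_{m+1}}(w^{l'}(s)-w^{l'}(t_m))\,dw^l(s)$,
whose $L^p(\Omega)$ norm is of order $\gamma$ with an $O(1)$ prefactor. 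This term is orthogonal to the first chaos $\{w^l(t_{m+1})-w^l(t_m)\}$, so it survives the predictable $L^2(\Omega)$-projection and ends up in your ``higher-order remainder,'' which is therefore $O(\gamma)$, not $O(\gamma^{1+(\theta-1/p)})$ as you claim. (The predictable projection only captures the drift-in-time correction, which is $O(\gamma^{3/2})$ and hence strictly smaller than the $O(\gamma)$ piece.) Dividing by $\gamma$ to put the remainder in the $f$-slot and summing gives $\gamma\sum_m E\|\text{remainder}/\gamma\|^p_{n,p;\,h}\sim T$, an $O(1)$ quantity rather than $O(h^{2p(\theta-1/p)})$; with the less sharp BDG/H\"older bound $\|\Xi_m\|_{L^p}\sim\gamma^{1/2+\theta-1/p}$ the computation even blows up. Either way the target estimate \eqref{3.3} is not reached. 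More fundamentally, a second-chaos increment cannot be fitted into the $g$-slot of \eqref{3.1}, whose noise term is by construction a predictable process times a Brownian increment, so no splitting of $\Xi_m$ between the two slots can close the argument as written.

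The paper circumvents the issue by not working with $w_{\gamma,h}=v-v_{\gamma,h}$ at all. It introduces, in Lemma~\ref{lemma 5.5} and \eqref{14.2}, a family of auxiliary SPDEs $v_m$ on $[t_{m-1},t_m]$ whose stochastic term has the coefficient frozen at $t_{m-1}$: $(g^l(t_{m-1},x)+\nu^l(t_{m-1},x)v_{m-1}(t_{m-1},x))(w^l(t_m)-w^l(t_{m-1}))$. Because this is exactly of the form (predictable at $t_{m-1}$)$\times$(Brownian increment), the grid restriction of $v_m$ solves a difference equation of type \eqref{3.1} with no spurious martingale residual, so Theorem~\ref{theorem 3.2} applies directly to $e_{\gamma,h}=v_m-v_{\gamma,h}$ with only deterministic Taylor/consistency residuals. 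Separately, Lemma~\ref{lemma 5.5}~$(ii)$ shows $v_m-v$ is of order $\gamma^{\theta p-1}$ in the relevant norms, using the SPDE theory of \cite{Kr_99} and the H\"older modulus of Remark~\ref{remark 2.1}. The two error contributions are then added. This freezing-of-the-noise-coefficient device is the ingredient missing from your proposal; your pieces~(i)--(ii) and the closing discrete Sobolev embeddings are consistent with the paper, but the stochastic residual requires this structurally different handling.
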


	\textit{\textbf{Discrete version of the parabolic Littlewood-Paley inequality.}}	
	Let Assumptions \ref{assumption 3.1} and \ref{assumption 3.2} hold
	and assume that  
	$
		a
	$
	 is independent of  $x$. 	
	Let 
	$\rho > 0$
	 be a number.
	For any $m \in \bZ$
	by
	$
	  G (m, \cdot, \cdot), 
	$
	we denote the solution to the  following equation: 
	\begin{equation}
				\label{3.4}
	\begin{aligned}
		  z (n+1, y) - z (n, y) & = \rho \sum_{k = 1}^{d_1} \lambda_k (n) \Delta_{l_k, 1} z (n, y), \\
																& z (m, y) =  I_{y  = 0}, \,  n \in [m, \infty) \cap \bZ, y \in \bZ^d.
	\end{aligned}
	\end{equation}

	\begin{theorem}
				\label{theorem 3.3}
	 Let 
	Assumptions \ref{assumption 3.1}
	and  \ref{assumption 3.2} hold.
	Fix some number 
	$
		\rho \in (0, (2d_1 \delta^{*}_2)^{-1}).
	$ 
	Assume that the function 
	$a$ is independent of $x$ and $\omega$.
	Let   
	 $
		g :  \bZ^{d+1} \to \bR
	$
	be a
	deterministic function
	 with compact support.  
	Then, for any $p \geq 2$,
	and any $k \in \{1, \ldots, d\}$,
	we have
	$$
		 \sum_{ n  \in \bZ}  || \sum_{ j  = -\infty}^{n - 1} |\delta_{e_k, 1} G (j, n, \cdot) 	\ast_1  g (j, \cdot) (\cdot)|^2||^{p/2}_{p/2; \, 1} \leq 
		N  || g   ||_{  l_p (\bZ^{d+1}) }^p.
	$$
	Here 
	$
		G
	$
	 is the function defined by \eqref{3.4}, 
	and  $N$ depends only on $p$, 
	$
		d, d_1, \delta^{*}_1, \delta^{*}_2, \Lambda, \rho.
	$
	\end{theorem}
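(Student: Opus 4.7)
The plan is to adapt the sharp-function approach of \cite{Kr_06} to the lattice setting. Write
\[
F(n,y) := \Bigl(\sum_{j<n} |\delta_{e_k,1} G(j,n,\cdot) *_1 g(j,\cdot)(y)|^2\Bigr)^{1/2},
\]
so the claimed inequality is equivalent to $\|F\|_{l_p(\bZ^{d+1})} \leq N\|g\|_{l_p(\bZ^{d+1})}$. I proceed in two steps: the case $p=2$ by Fourier analysis, and $p>2$ by a Fefferman--Stein sharp-function argument using the discrete heat-kernel estimates in Corollary \ref{corollary 11.3}.

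For $p=2$, take the Fourier transform on the torus $[-\pi,\pi]^d$. Since $a$ and hence each $\lambda_k$ depends only on time, iterating \eqref{3.4} gives $\widehat{G}(j,n,\xi) = \prod_{m=j}^{n-1}(1 - A_m(\xi))$ with $A_m(\xi) := \rho\sum_{k}\lambda_k(m)(2-2\cos(l_k\cdot\xi))$. The hypothesis $\rho < (2d_1\delta_2^*)^{-1}$ forces $A_m(\xi)<2$, so $|1-A_m(\xi)|^2 \leq 1 - cA_m(\xi) \leq 1 - cA(\xi)$ with $A(\xi) := \rho\delta_1^*\sum_k(2-2\cos(l_k\cdot\xi))$. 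Summing a geometric series in $n$ and using $|e^{i\xi_k}-1|^2 = 2-2\cos\xi_k \leq A(\xi)/(\rho\delta_1^*)$, valid because $e_k\in\Lambda$ by Assumption \ref{assumption 3.2}(i), yields $\sum_{n>j}|(e^{i\xi_k}-1)\widehat{G}(j,n,\xi)|^2 \leq N$, uniformly in $j$ and $\xi$. Plancherel and Fubini then give $\|F\|_{l_2} \leq N\|g\|_{l_2}$.

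For $p>2$, I combine the Fefferman--Stein inequality $\|F\|_{l_p} \leq N\|F^\#\|_{l_p}$ on the parabolic lattice with the pointwise sharp-function bound $F^\#(n,y) \leq N\bigl(M|g|^2(n,y)\bigr)^{1/2}$, where $M$ is the parabolic Hardy--Littlewood maximal operator. The Hardy--Littlewood maximal theorem in $l_{p/2}$ (valid since $p/2>1$) then closes the argument. To obtain the pointwise bound, fix a parabolic cube $C = \{(n,y): n_0 \leq n < n_0+r^2,\ \|y-y_0\|_\infty < r\}$ and split $g = g_1+g_2$ with $g_1$ the restriction to a controlled parabolic dilate $C^*$ of $C$. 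The square function $F_1$ arising from $g_1$ is handled by Cauchy--Schwarz and the $p=2$ estimate of the previous step, producing the $M(|g|^2)^{1/2}$ bound. The oscillation $|F_2(n_1,y_1) - F_2(n_2,y_2)|$ over $C$ is bounded via Cauchy--Schwarz in the $j$-sum by pointwise differences of $\delta_{e_k,1}G(j,n,y)$ in its time and space arguments evaluated at points parabolically far from $\mathrm{supp}\,g_2$; these are exactly the Gaussian-type difference bounds furnished by Corollary \ref{corollary 11.3} via the local central limit theorem of Lemma \ref{lemma 11.2}.

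The main obstacle is the last step. Since $G(j,n,y)$ lacks the exact scaling and rotational symmetry of the continuous heat kernel, one must extract enough quantitative decay and smoothness from the local CLT to dominate the $g_2$-tails uniformly across parabolic scales $r \in \bN$, and to absorb the discrete defects (small cubes $r=1$ are handled separately by the triangle inequality and the $p=2$ bound). Careful use of the pointwise Gaussian bounds on $\delta_{e_k,1}G$ and its first-order differences produces a parabolic Riesz-type potential whose $C$-average is controlled by $N(M|g|^2)^{1/2}(n_0,y_0)$, completing the proof.
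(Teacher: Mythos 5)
Your overall route coincides with the paper's: the $p=2$ case by Fourier analysis on the torus (this is exactly Lemma \ref{lemma 5.1}, and your spectral-gap computation using $l_k=e_k$ and $\rho<(2d_1\delta_2^*)^{-1}$ is correct), and $p>2$ by the discrete parabolic sharp-function scheme, i.e. Theorem \ref{theorem 12.2} plus Corollary \ref{corollary 12.1}, with kernel bounds coming from the local CLT (Lemma \ref{lemma 11.2}, Corollary \ref{corollary 11.3}, Lemma \ref{lemma 11.5}). The genuine gap is in your treatment of the whole far part $g_2$ by an oscillation estimate alone. The paper splits the far part further: (a) spatially far but temporally inside the time-slab of the cube, and (b) temporally far in the past (the future piece contributes nothing). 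Only (b) is handled by oscillation, using the $(n-j)^{-m/2}$ decay of kernel differences from Lemma \ref{lemma 5.4}, which is effective precisely because there $n-j\ge r^2$. For (a) this mechanism fails: $n-j$ can be of order $1$, so the time-decay difference bounds give no smallness in $r$; worse, when you compare $F_2$ at two times $n_2<n_1$ inside the cube, the $j$-sum for $n_1$ contains brand-new terms with $n_2\le j<n_1$ which are not differences of anything and must be estimated in size. What closes this regime is spatial decay: since $g_2(j,\cdot)$ vanishes within distance comparable to $r$ of the cube, one needs the size bound $|\delta^{\alpha}_1 G(j,n,\cdot)\ast_1 g_2(j,\cdot)(x)|\le N r^{-|\alpha|}\,\cM_x g(j,0)$ (the paper's Lemma \ref{lemma 5.3}, proved from Lemma \ref{lemma 11.5} $(ii)$ via summation by parts, Lemma \ref{lemma 9.5}), summed over the $O(r^2)$ relevant $j$'s. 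Your sketch never isolates this regime nor states such an estimate; as written ("oscillation bounded by pointwise differences of $\delta_{e_k,1}G$ in its time and space arguments") the step would fail. Once you add this direct size estimate and treat that piece separately -- equivalently, adopt the paper's four-way splitting $g_1,\dots,g_4$ -- your plan matches the paper's proof.

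A smaller point: your remark that small cubes are "handled by the triangle inequality and the $p=2$ bound" does not work, since the $p=2$ estimate is a global $l_2(\bZ^{d+1})$ bound and gives no pointwise control of the mean oscillation over a fixed cube. No separate treatment is in fact needed: the kernel estimates above are uniform in $r\in\bN$, and if some step of yours requires $r\ge r_0(\Lambda)$, the mean oscillation over $Q_r$ with $r<r_0$ is dominated by a constant (depending only on $d,r_0$) times that over $Q_{r_0}$.
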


	 The next theorem demonstrates how  Theorem \ref{theorem 3.3} is related to Theorem \ref{theorem 3.2}.

	\begin{theorem}
				\label{theorem 3.4}
	Invoke the conditions and the notations of Theorem \ref{theorem 3.2}  
	with
	the function 
	$a$ independent of $x$ and $\omega$,
	and
	$
		b \equiv 0,  \nu  \equiv 0,    f \equiv  0 \equiv u_0 .
	$ 
	Let 
	$v_{\gamma, h}$
	 be the solution of Eq. \eqref{3.1}. 
	Then,  for any $n \in \bZ_{+}$
	$$
		E \sum_{m = 0}^{T/\gamma} [ v_{\gamma, h} (t_m, \cdot) ]^p_{ n + 1, p; \, h}
		 \leq N   \sum_{l = 1}^{d_0} E \sum_{m = 0}^{(T/\gamma) - 1} [ g^l (t_m, \cdot) ]^p_{ n,  p; \, h}, 
	$$
	where $N$ depends only on $n, d, d_0, d_1,  \delta_1^{*}, \delta_2^{*}, \rho, \Lambda$.
	\end{theorem}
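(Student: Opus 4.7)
The plan is to represent $v_{\gamma, h}$ by a discrete Duhamel formula, apply a Burkholder--Davis--Gundy bound, and invoke Theorem \ref{theorem 3.3} after rescaling to the unit lattice. Since $a$ is deterministic and $x$-independent, by Assumption \ref{assumption 3.2}(iv) and Remark \ref{remark 3.1} the $\lambda_k$ may be taken to be deterministic functions of $t$ alone. Let $G^{\gamma, h}(j, m, y)$ denote the discrete propagator for the homogeneous version of \eqref{3.1}, defined for $m \geq j$ by $G^{\gamma, h}(j, j, y) = h^{-d} I_{y = 0}$ and the same evolution equation. Under the stated reductions ($b \equiv 0$, $\nu \equiv 0$, $f \equiv 0$, $u_0 \equiv 0$), iteration of the scheme yields
\[
v_{\gamma, h}(t_m, x) = \sum_{j = 0}^{m - 1} \sum_{l = 1}^{d_0} \bigl(G^{\gamma, h}(j + 1, m, \cdot) \ast_h g^l(t_j, \cdot)\bigr)(x)\, (w^l(t_{j+1}) - w^l(t_j)).
\]
For a multi-index $\alpha$ with $|\alpha| = n + 1$, write $\alpha = \alpha' + e_k$ with $|\alpha'| = n$; since $\delta_h^\alpha$ commutes with $\ast_h$ and with $\Delta_{l_k, h}$, one $\delta_{e_k, h}$ can be absorbed into the kernel, leaving $\delta_h^{\alpha'}$ on $g^l$. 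Identifying the resulting discrete-time martingale in $m$ with an It\^o integral of a piecewise-constant integrand and applying BDG gives
\[
E\bigl|\delta_h^{\alpha} v_{\gamma, h}(t_m, x)\bigr|^p \leq N\, E\Bigl(\gamma \sum_{j = 0}^{m - 1} \sum_{l = 1}^{d_0} \bigl|\bigl(\delta_{e_k, h} G^{\gamma, h}(j + 1, m, \cdot) \ast_h \delta_h^{\alpha'} g^l(t_j, \cdot)\bigr)(x)\bigr|^2\Bigr)^{p/2}.
\]

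Next I rescale to the unit lattice. With $x = hz$, $z \in \bZ^d$, and $\rho' := \gamma / h^2 \leq \rho$, the kernel $\tilde G(j, n, z) := h^d G^{\gamma, h}(j, n, hz)$ satisfies \eqref{3.4} with parameter $\rho'$ and coefficients $\lambda_k(t_n)$. A direct computation yields
\[
\bigl(\delta_{e_k, h} G^{\gamma, h}(j + 1, m, \cdot) \ast_h \delta_h^{\alpha'} g^l(t_j, \cdot)\bigr)(hz) = h^{-1} \bigl(\delta_{e_k, 1} \tilde G(j + 1, m, \cdot) \ast_1 \phi_l^h(t_j, \cdot)\bigr)(z),
\]
with $\phi_l^h(t_j, z) := \delta_h^{\alpha'} g^l(t_j, hz)$, so the prefactor $\gamma$ combines with $h^{-2}$ into $\rho'$. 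Setting $\psi_l(\tilde{j}, z) := \phi_l^h(t_{\tilde{j} - 1}, z)\, I_{1 \leq \tilde{j} \leq T/\gamma}$, the inner sum becomes $\sum_{\tilde{j} \leq m} |\delta_{e_k, 1} \tilde G(\tilde{j}, m, \cdot) \ast_1 \psi_l(\tilde{j}, \cdot)|^2$. For each $\omega$ and each $l$, Theorem \ref{theorem 3.3} applied to the (pathwise deterministic) realization of $\psi_l$ controls the contribution of $\tilde{j} < m$; the diagonal term $\tilde{j} = m$ collapses to $\delta_{e_k, 1} \psi_l(m, \cdot)$, whose $l_p$ norm is majorized by $2 \|\psi_l(m, \cdot)\|_{p; 1}$. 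Combining via $(a + b)^{p/2} \leq 2^{p/2 - 1}(a^{p/2} + b^{p/2})$, taking expectation, and using $\rho' \leq \rho$ produces a bound of the form $N(\rho')^{p/2} \sum_l E\|\psi_l\|_{l_p(\bZ^{d+1})}^p$, with $N$ uniform in $\rho' \in (0, \rho]$.

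Finally, the identity $\|\psi_l\|_{l_p(\bZ^{d+1})}^p = h^{-d} \sum_{j = 0}^{(T/\gamma) - 1} \|\delta_h^{\alpha'} g^l(t_j, \cdot)\|_{p; h}^p$ cancels the $h^d$ picked up when summing over $x = hz$, and summing over multi-indices $\alpha$ with $|\alpha| = n + 1$ turns the left side into $\sum_m E[v_{\gamma, h}(t_m, \cdot)]_{n + 1, p; h}^p$ and produces on the right a constant multiple of $\sum_l \sum_m E[g^l(t_m, \cdot)]_{n, p; h}^p$, with multiplicity depending only on $n$ and $d$. The main obstacle is the careful bookkeeping needed to reconcile the forward index convention $j + 1 \leq m$ of the Duhamel formula, and the mesh $h$ with parameter $\rho' = \gamma / h^2$, with the convention $j < n$ and the unit lattice at parameter $\rho$ used in Theorem \ref{theorem 3.3}; once these are aligned, the BDG step and the pathwise invocation of Theorem \ref{theorem 3.3} are routine.
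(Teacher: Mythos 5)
Your proof follows the same route as the paper's: discrete Duhamel representation, rewriting the discrete martingale as a stochastic integral and applying BDG, rescaling to the unit lattice, separate treatment of the diagonal term, and a pathwise application of Theorem~\ref{theorem 3.3}. Two minor cleanups: with the paper's unweighted convolution $f \ast_h g(x) = \sum_{y \in h\bZ^d} f(x-y)g(y)$ the propagator's initial condition should be $G^{\gamma,h}(j,j,y) = I_{y=0}$ rather than $h^{-d}I_{y=0}$ (your rescaling identity tacitly assumes a volume-weighted convolution, so the arithmetic still closes out); and Theorem~\ref{theorem 3.3} is stated for deterministic $g$ of compact support, so the pathwise invocation needs the truncation-and-Fatou approximation step that the paper carries out explicitly before applying it to $\psi_l$.
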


	\begin{proof}

	Let $\alpha$ be a multi-index of order $n$.
	 Observe that 
	$
		\delta^{\alpha}_h v_{\gamma, h}
	$  
	satisfies Eq. \eqref{3.1} 
	with 
	$
		b \equiv 0,   \nu \equiv 0,   f \equiv 0 \equiv u_0 
	$	
	and 
	$
		g^l
	$ 
	replaced by $\delta^{\alpha}_h g^l$.
	Hence, we may assume that $n = 0$.

	Next, 
	for any $r, m \in \bZ$, 
	such that $m \geq r$, 
	and $x \in h \bZ^d$, 
	denote
	 \begin{equation}
				\label{3.4.1}
		G_h (r, m, x) = G (r,  m, x/h).
	 \end{equation}
	
	By Duhamel's principle (see Lemma \ref{lemma 11.6}),
	for any $m \geq 2$, 
	$x \in h \bZ^d$,
	 and $i$,
	$$
		\delta_{e_i, h} v_{\gamma, h} (t_m, x) =  \sum_{l = 1}^{d_0}  (\delta_{e_i, h}  g^l (t_{m - 1}, x)  (w^l (t_m) - w^l (t_{m-1})) + \tilde v^l (t_m, x)),
	$$
	where 
	$$	
		\tilde v^l (t_m, x) = 	\sum_{j = 0}^{m-2} \sum_{l = 1}^{d_0}   (\delta_{e_i, h}  G_{ h} (j,  m - 1, \cdot)) \ast_h g^l (t_j, \cdot) (x)  (w^l (t_{j+1}) - w^l (t_j)).
	$$
	 We rewrite $\tilde v^l (t_m, x)$ as a stochastic integral as follows:
	 $$
			\tilde v^l (t_m, x) =  	\sum_{l = 1}^{d_0}	\int_0^{ t_{m - 1} }  \tilde g^l (r, x) \, dw^l (r),
	 $$
	where 
	$$
	  	 \tilde g^l (r, x) = \sum_{ j = 0}^{m - 2}  (\delta_{e_i, h} G_{ h} (j, m - 1, \cdot)) \ast_h g^l (t_j, \cdot) (x) I_{ t_j \leq r < t_{j+1} }.
	$$
	First, since $g^l (t_{m - 1}, x)$ is $\cF_{t_{ m  - 1} }$-measurable,
	$$
		 \sum_{x \in h \bZ^d} E |\delta_{e_i, h}  g^l (t_{m - 1}, x)  (w^l (t_m) - w^l (t_{m - 1}))|^p 
	$$
	 $$
			  \leq N (p,  \rho)   \sum_{x \in h \bZ^d}  E | (T_{e_i, h} - 1)g^l (t_m, x)|^p
		\leq  N (p,  \rho)    ||g^l (t_m, \cdot)||^p_{p; \, 1},
	$$
	and, hence, we may concentrate on the function $\tilde v^l$.

	Since $G_h (j, m-1, x)$ is independent of $\omega$, 
	the random variable
	$
		\tilde g^l (r, x)
	$ 
	is $\cF_{r}$-measurable, for any $r > 0$.
	Then, by the Burkholder-Davis-Gundy inequality
	$$
		E |\tilde v^l (t_m, x)|^p \leq
	 N (p, d_0) \sum_{l = 1}^{d_0} E   ( \int_0^{ t_{m - 1} } |\delta_{e_i, h} \tilde g^l (r, x)|^2 \, dr)^{p/2} 
	$$
	  $$
		 = N (p, d_0) \gamma^{p/2} h^{-p} \sum_{l = 1}^{d_0}  E (\sum_{j = 0}^{m - 2} |(T_{e_i, h} - 1) G_{ h}  (j, m - 1, \cdot) \ast_h g^l (t_j, \cdot) (x)|^2)^{p/2} 
	 $$
	  $$
		= N (p, d_0, \rho) \sum_{l = 1}^{d_0}  E (\sum_{j = 0}^{m - 2} |(T_{e_i, h} - 1) G_{ h} (j, m - 1 \cdot) \ast_h g^l (t_j, \cdot) (x)|^2)^{p/2}.
	  $$
	By what was just proved
	we may  assume
	$
		\gamma = 1 = h
	$
	and 
	  $T \in \bN$.

	 We fix $\omega$ 
	and replace 
	$g^l$
	by the function
	$$
		 g^l_r (k, x) = g^l (k, x) I_{k \in [0, T - 1]} I_{ ||x|| \leq r}
	$$
	which has  compact support.
	By Theorem \ref{theorem 3.3}
	$$
		 \sum_{ m  =  0}^T  || \sum_{ j  = 0}^{m - 2} |\delta_{e_k, 1} G (j, m - 1, \cdot) 	\ast_1  g^l_r (j, \cdot) (\cdot)|^2||^{p/2}_{p/2; \, 1} \leq 
		N   \sum_{ m  = 0}^{T-2} || g^l_r (m, \cdot)  ||_{p; \,  1}^p.
	$$
	Then we  take a limit as $m \to \infty$.
	Note that by Lemma \ref{lemma 11.1} 
	$
	\delta_{e_k, 1} G (j, m-1, \cdot)
	$
	has  compact support.
	This combined with the fact that
	$$
		\lim_{r \to \infty} ||g^l_r (t, \cdot) - g^l (t, \cdot) ||_{ l_{\infty} (\bZ^d) } \to 0, \, \, \forall t \in \{0, 1, \ldots, T-1\}
	$$
	implies that 
	$
		\delta_{e_k, 1} G (j, m - 1, \cdot) \ast_1 g^l_r (j, \cdot)
	$
	converges to 
	$
		\delta_{e_k, 1} G (j, m - 1, \cdot) \ast_1 g^l (j, \cdot) (x)
	$
	as $r \to \infty$,
	for any $j, m, x$.
	Now the assertion follows from Fatou's lemma.

	\end{proof}

	\mysection{Main results for deterministic equations}
				\label{section 4}
		In this section all the functions are assumed to be deterministic i.e. independent of $\omega$.

	{\textit{\textbf{Stability estimates.}}
	Fix some numbers $n  \in \bZ_{+}$,
		 $p > 1$,  
		$h, \gamma \in (0, 1]$,
		and $T > 0$.

	Here is the main result for deterministic finite difference equations.
	\begin{theorem}
				\label{theorem 4.1}
	Let $T > 0$, 
	$n \in \bZ_{+}$,
	$p > 1$
	be numbers.
	Invoke
	all the assumptions of Theorem \ref{theorem 3.2}.
	In addition, we also assume that 
	$
		g^l \equiv 0 \equiv \nu^l, l = 1, \ldots, d_0
	$ 
	and that all the functions 
	$
		a^{i j}, b^i,  \lambda_k, f
	$
	  are independent of $\omega$.
	Let
	 $
		u_{\gamma, h}
	$
	 be the  solution of Eq. \eqref{4.1} with $u_0 \equiv 0$.
	Then,    we have
	\begin{equation}
				\label{4.1.0.0}
		   \sum_{m = 0}^{T/\gamma} || u_{\gamma, h} (t_m, \cdot) ||^p_{ n + 2,  p; \, h}
																									 \leq N   \sum_{m = 0}^{(T/\gamma) - 1} || f (t_m, \cdot)||^p_{ n, p; \, h},																										
	\end{equation}
	and, in addition, if $p \geq 2$, then 
	\begin{equation}
				\label{4.1.0}
		\max_{m= 0, \ldots, T/\gamma} || u_{\gamma, h} (t_m, \cdot) ||^p_{ n + 1, p; \, h} \leq
																				 N h^2  \sum_{m = 0}^{(T/\gamma) - 1} || f (t_m, \cdot)||^p_{ n, p; \, h},
	\end{equation}													
	\end{theorem}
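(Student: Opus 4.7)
The plan is to split the argument according to whether $p \geq 2$ or $1 < p < 2$. For $p \geq 2$ both estimates reduce to Theorem \ref{theorem 3.2}; the case $1 < p < 2$, in which only \eqref{4.1.0.0} is claimed, is obtained by a duality argument that feeds back into the $p' = p/(p-1) \geq 2$ case of the same theorem.

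For $p \geq 2$, all the assumptions of Theorem \ref{theorem 3.2} are in force, and Eq.\ \eqref{4.1} is exactly Eq.\ \eqref{3.1} with $g^l \equiv 0$ and $\nu^l \equiv 0$; since all data are deterministic, $u_{\gamma,h}$ coincides with $v_{\gamma,h}$, so the quantity $\mathfrak{R}_n$ in \eqref{3.5}--\eqref{3.5.1} reduces to $\sum_{r=0}^{(T/\gamma)-1} ||f(t_r,\cdot)||^p_{n,p;h}$ and the expectations disappear. This delivers \eqref{4.1.0.0} and \eqref{4.1.0} at once.

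For $1 < p < 2$, fix a multi-index $\alpha$ with $|\alpha| \leq n+2$ and use the $l_p$--$l_{p'}$ duality on $\gamma\bZ_{+} \times h\bZ^d$ to estimate $\delta^\alpha_h u_{\gamma,h}$ via the pairing $\gamma \sum_m h^d \sum_x \delta^\alpha_h u_{\gamma,h}(t_m, x) \phi(t_m, x)$ against compactly supported test functions $\phi$ of unit $l_{p'}$-norm. Transferring the differences onto $\phi$ by discrete integration by parts and then introducing the backward adjoint equation to \eqref{4.1} --- which after a time reversal $t \mapsto T - t$ becomes a forward equation of the same form as \eqref{4.1} with coefficients $\tilde\lambda_k(t,x) = \lambda_k(T - t, x)$, a modified but uniformly bounded drift, and an additional bounded zero-order term arising from the discrete Leibniz rule applied to $\Delta_{l_k, h}(\lambda_k \psi)$ --- the $p' \geq 2$ case of the theorem (extended routinely to equations carrying a bounded zero-order term) applied to this adjoint controls $\psi$ in the appropriate $l_{p'}$ Sobolev norm in terms of $||\phi||_{p';h}$. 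Pairing $u_{\gamma,h}$ against $\psi$ via \eqref{4.1} and using H\"older's inequality then yields the desired $l_p$ bound on $\delta^\alpha_h u_{\gamma,h}$ in terms of $||f||_{n,p;h}$.

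The main technical obstacle is setting up the adjoint cleanly. The spatial adjoint of $\lambda_k(t,x)\Delta_{l_k,h}$ is $\Delta_{l_k,h}(\lambda_k \,\cdot)$, which by the discrete Leibniz rule equals $\lambda_k\Delta_{l_k,h}\psi$ plus correction terms involving $\Delta_{l_k,h}\lambda_k$, $\delta_{l_k,h}\lambda_k$, and $\delta_{-l_k,h}\lambda_k$. Under Assumption \ref{assumption 3.4.1} with $n = 0$, $\lambda_k$ is merely bounded, so these correction coefficients are individually of size $h^{-2}$ and $h^{-1}$; the saving is that they carry a factor $\gamma \leq \rho h^2$ by Assumption \ref{assumption 3.8}, making the effective perturbations uniformly bounded by constants depending only on $||\lambda_k||_\infty$ and $\rho$. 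One must then verify that the $p' \geq 2$ stability extends to equations with a bounded zero-order term (straightforward via a Gronwall-type argument in the time index) and keep track that the constants in the duality argument remain uniform in $h, \gamma$ subject to Assumption \ref{assumption 3.8}.
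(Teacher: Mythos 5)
Your proposal is correct but takes a genuinely different route from the paper's. The paper proves Theorem~\ref{theorem 4.1} by rerunning Steps~1--6 of the proof of Theorem~\ref{theorem 3.2}: the only place that truly needed $p\geq 2$ there was the stochastic Littlewood--Paley input (Theorems~\ref{theorem 3.3},~\ref{theorem 3.4}) used in Step~1, and once $g^l\equiv\nu^l\equiv 0$ that step only needs the deterministic Calderon--Zygmund estimate (Theorems~\ref{theorem 4.3},~\ref{theorem 4.4}), which already holds for all $p>1$ --- the duality for $p\in(1,2)$ happens \emph{inside} the constant-coefficient Theorem~\ref{theorem 4.3}, where the generator is $\sum_k\lambda_k(t)\Delta_{l_k,1}$ with no $x$-dependence and hence no Leibniz corrections, while variable coefficients are handled afterwards by the freezing argument (Steps~3--4), which is $p$-uniform. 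You instead reduce $p\geq 2$ directly to Theorem~\ref{theorem 3.2} (legitimate, since under your hypotheses $u_{\gamma,h}$ is deterministic and the expectations collapse), and for $p\in(1,2)$ you transpose the full variable-coefficient scheme globally onto a reversed-time adjoint. The trade-off is that the spatial adjoint $\Delta_{l_k,h}(\lambda_k\,\cdot)$ carries Leibniz corrections that never appear when duality is done at the constant-coefficient level. Your bookkeeping of them is essentially right, with one precision worth spelling out: the first-order correction $\gamma(\delta_{l_k,h}\lambda_k)(\delta_{-l_k,h}\psi)$ is most cleanly read as $(\gamma/h^2)\,(T_{l_k,h}\lambda_k-\lambda_k)\,(T_{-l_k,h}\psi-\psi)$, which under $\gamma/h^2\leq\rho$ is a bounded linear combination of \emph{shifts} of $\psi$ rather than a literal zero-order term $c\,\psi$; so the ``routine extension'' of the $p'\geq 2$ stability estimate that you invoke must accommodate bounded shift-operator perturbations, which is indeed routine (shifts are $l_p$-isometries and a discrete Gronwall step in $m$ closes the estimate) but is slightly more than ``a bounded zero-order term.'' Both arguments rest on the same constant-coefficient $L_p$ estimate for all $p>1$; the paper localizes it by freezing while you transpose globally, which avoids a second pass through the freezing argument at small $p$ at the cost of tracking an adjoint that is only approximately of the form \eqref{4.1}.
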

	where $N$ depends only on 
	$p, d,  d_1, 
	\delta^{*}_1, \delta^{*}_2,
	 n,  K_1, 
		\Lambda, \rho
	$
	and 
	$T$.

	\textit{\textbf{Rate of convergence.}}
		Fix some numbers 
		$
			n \in \bZ_{+},   s  \in  (0, \infty)\setminus \bN
		$,
		$s_1 >  0$,  
		$p > 1$,
		$\eta \in (0, 1)$,  
		$h, \gamma \in (0, 1]$,
		and $T > 0$.

	\begin{assumption}
					\label{assumption 4.3.1}
	There exists a constant $\hat K_2 > 0$ such that,
		 for any $i, j, t,$
		$$
			||a^{i j} (t, \cdot)  ||_{  C^{s+2+\eta}  } +   || b^i (t, \cdot)  ||_{  C^{s+2+\eta} } < \hat K_2.
		$$
	\end{assumption}

	\begin{assumption}
					\label{assumption 4.4}
		In addition, there exists a constant $\hat K_3 > 0$ such that,
		 for any $i, j,$
	 	 $$
			||a^{i j}  ||_{   C^{ 1 } ([0, T], C^{n-1, 1} )   }   +  || b^i  ||_{ C^{ 1 } ([0, T], C^{n-1, 1})   } < \hat K_3.
		 $$
	\end{assumption}

	\begin{assumption}
					\label{assumption 4.5}		
	  		  $
			       u_0 \in  H^{s+4 - 2/p}_p,
			  $
				 $
					f  \in L_p  ([0,T], H^{s+2}_p).
				$
	\end{assumption}

	\begin{assumption}
					\label{assumption 4.6}		
		$D_t f \in L_p ([0, T], H^{s_1}_p)$. 
	\end{assumption}

	In this section we consider the following PDE: 
	\begin{equation}
				\label{1.1}
	  \begin{aligned}
		D_t u (t, x) &= a^{i j} (t, x) D_{i j} u (t, x) \\
														&+ b^i (t, x) D_i u (t, x) + f (t, x), \quad u (0, x) = u_0 (x),
	  \end{aligned}
	\end{equation}

	 The following theorem addresses the existence and uniqueness of Eq. \eqref{1.1}.
	The proof can be found, for example, Theorem 5.1 of \cite{Kr_99}.

	\begin{theorem}
				\label{theorem 4.5}
	Let 
	$
		s \in (0, \infty)\setminus \bN
	$, 
	$p >  1, T > 0$ be numbers,
	and  
	Assumptions \ref{assumption 3.1},
	 \ref{assumption 3.4},
	 \ref{assumption 4.3.1},  \ref{assumption 4.5} hold.
	Then, there exists a unique solution 
	$
		u \in \fH^{s+4}_p (0, T)
	$
	 of \eqref{1.1}
	and, in addition, 
	$$
		|| u ||^p_{\fH^{s+4}_p (0, T)} 
			\leq N  (\int_0^T || f (t, \cdot) ||^p_{ H^{s+2}_p } \, dt + || u_0 ||^p_{ H^{ s+4 - 2/p}_p }),
	$$
	where $N$ depends only on $d, p, \delta_1, \delta_2, s, \eta,   K_1$ and $T$.	
	\end{theorem}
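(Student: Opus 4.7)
The target space $\fH^{s+4}_p(0,T)$ requires $u \in L_p([0,T], H^{s+4}_p)$, $D_t u \in L_p([0,T], H^{s+2}_p)$, and $u(0,\cdot) \in H^{s+4-2/p}_p$, which is exactly consistent with the input data $u_0 \in H^{s+4-2/p}_p$, $f \in L_p([0,T], H^{s+2}_p)$ via $D_t u = Lu + f$ together with the mapping $L : H^{s+4}_p \to H^{s+2}_p$ made bounded by Assumption \ref{assumption 4.3.1}. I would combine a priori estimates with the method of continuity. First reduce to $u_0 \equiv 0$: let $w$ solve the standard heat equation $D_t w = \Delta w$ with $w(0,\cdot) = u_0$. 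Fourier/semigroup maximal regularity puts $w \in \fH^{s+4}_p(0,T)$, and $\tilde u = u - w$ satisfies the same type of equation with zero initial data and modified right-hand side $\tilde f = f + a^{ij} D_{ij} w + b^i D_i w - \Delta w \in L_p([0,T], H^{s+2}_p)$, using that multiplication by $a^{ij}, b^i \in C^{s+2+\eta}$ is bounded on $H^{s+2}_p$.

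Second, on the family $L_\tau = (1-\tau)\Delta + \tau L$, $\tau \in [0,1]$, solvability at $\tau = 0$ reduces to the heat equation. Given a uniform-in-$\tau$ a priori bound
$$\|u\|_{\fH^{s+4}_p(0,T)} \leq N \bigl(\|f\|_{L_p([0,T], H^{s+2}_p)} + \|u_0\|_{H^{s+4-2/p}_p}\bigr),$$
the method of continuity transfers solvability from $\tau = 0$ to $\tau = 1$ and yields uniqueness.

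Third, for the a priori estimate itself I would first handle $a^{ij}$ depending only on $t$: Fourier-analytic estimates (Mikhlin multiplier theorem applied to the symbol $i\tau + a^{ij}(t)\xi_i \xi_j$) give the $L_p(H^{s+4}_p)$ bound with the right constants. For $x$-dependent coefficients, freeze $a^{ij}$ at points $x_0$ on small balls $B_r(x_0)$, apply the frozen-coefficient estimate, and absorb the perturbation $(a^{ij}(t,x) - a^{ij}(t,x_0))D_{ij} u$, whose supremum on $B_r(x_0)$ can be made arbitrarily small by continuity of $a^{ij}(t,\cdot)$ (uniform in $t$ by Assumption \ref{assumption 4.3.1}). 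A standard partition of unity in $x$ assembles the local estimates into a global one, with lower-order and commutator remainders dominated by interpolation and absorbed for $r$ small.

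\textbf{Main obstacle.} The difficult step is running the above machinery at the \emph{high} regularity level $H^{s+4}_p$ for non-integer $s$. Applying $(1-\Delta)^{(s+2)/2}$ to the equation produces commutators of the form $[(1-\Delta)^{(s+2)/2}, a^{ij}(t,\cdot)] D_{ij} u$ that must be controlled in $L_p$; bounding these requires Bony-type paraproduct / Kato-Ponce commutator estimates on Bessel potential spaces, and the gain of one derivative in the commutator is powered precisely by the $C^{s+2+\eta}$-regularity with $\eta > 0$ from Assumption \ref{assumption 4.3.1}. Alternatively, and more economically, one simply invokes Theorem 5.1 of \cite{Kr_99}, which sets up this analysis directly on the $H^s_p$ scale (for the deterministic case, specializing $h^k \equiv 0$ in that theorem) and whose hypotheses are implied by Assumptions \ref{assumption 3.1}, \ref{assumption 3.4}, \ref{assumption 4.3.1}, \ref{assumption 4.5} in our setting.
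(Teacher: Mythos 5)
Your proposal is correct, and its closing sentence is in fact exactly what the paper does: the paper offers no argument of its own for Theorem \ref{theorem 4.5} beyond the citation ``the proof can be found, for example, in Theorem 5.1 of \cite{Kr_99}'' (specialized to the deterministic case $g \equiv 0$, $\nu \equiv 0$, i.e. to $\fH^{s+4}_p$ in the sense of Definition \ref{definition 2.1}). The detailed machinery you sketch --- reduction to zero initial data via a heat-equation lift, the method of continuity along $L_\tau = (1-\tau)\Delta + \tau L$, frozen coefficients plus a partition of unity, and commutator control at the fractional level $H^{s+2}_p$ powered by $a^{ij}, b^i \in C^{s+2+\eta}$ --- is essentially a self-contained reproof of Krylov's theorem rather than a different theorem; it is the standard route and is consistent with the definition of the $\fH^{s+4}_p$-norm used here ($u \in L_p([0,T],H^{s+4}_p)$, $\mathbb{D}u \in L_p([0,T],H^{s+2}_p)$, $u(0,\cdot) \in H^{s+4-2/p}_p$). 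What your longer route buys is independence from the reference; what the citation buys is brevity, and also that Krylov's formulation already accommodates coefficients merely bounded and measurable in $t$, which is the one place where your sketch is slightly optimistic: with $a^{ij}(t)$ only measurable in $t$, a direct Mikhlin argument on the space-time symbol $i\tau + a^{ij}(t)\xi_i\xi_j$ is not available, and the constant-coefficient-in-$x$ case is instead handled via heat-kernel/Calderon--Zygmund or sharp-function estimates (as in \cite{Kr_06}); this is a repairable detail inside an otherwise sound outline, not a gap in the logic of the theorem.
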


	From Theorem \ref{theorem 4.1} we derive the estimate of the rate of convergence of the finite difference scheme \eqref{4.1}.
	
	\begin{theorem}
					\label{theorem 4.2}
	Let $T > 0$, 
	$p > 1$, 
	  $
		n \in \bZ_{+}
	  $
	    $
		s \in (n + d/p, \infty) \setminus \bN,
	    $		
		$
			s_1 > n+ d/p
		$
	 be numbers.
	 Let Assumption \ref{assumption 3.1} - Assumptions \ref{assumption 3.8} 
	hold with 
	$
		g^l \equiv 0  \equiv \nu^l, l = 1, \ldots, d_0
	$
	and all the functions 
	$
		a^{i j}, b^i, \lambda_k, f
	$
	 independent of $\omega$.
	In addition, let Assumptions
	 \ref{assumption 4.3.1} - \ref{assumption 4.6} 
	be satisfied.
	Let 
	$
		u \in \mathfrak{H}^{s+4}_p (0, T)
	$
	 be the unique solution of \eqref{1.1} 
	(see
	Theorem \ref{theorem 4.5}),
	 and 
	$u_{\gamma, h}$
	 be the solution of \eqref{4.1}.
	Then, 
	\begin{equation}
				\label{4.2.0}
		\gamma \sum_{m = 0}^{T/\gamma}  || u (t_m, \cdot) - u_{\gamma, h} (t_m, \cdot)  ||^p_{n+2, p; \, h}  
				\leq N  h^{2  p }  \cR,
	\end{equation}
	where
	\begin{align*}
		\cR = || f ||^p_{ L_p ([0, T], H^{s+2}_p) }  				 +  || D_t f ||^p_{ L_p ([0, T], H^{s_1}_p) }
	+ ||u_0||^p_{  H^{s+4 - 2/p}_p },
	\end{align*}
	and $N$
	depends only
	on 
	$p, d, d_1$, $\delta_1, \delta_2$, $\delta_1^{*}, \delta_2^{*}, s, s_1,  n,  \eta$, $K_1, \hat K_2,  \hat K_3,  \Lambda, \rho$ and $T$.
	In addition, if $p \geq 2$, then															
	\begin{equation}
				\label{4.2.1}
		\max_{ m = 0, \ldots, T/\gamma}  \sup_{x \in h\bZ^d} \sum_{ |\alpha| \leq  n} |\delta^{\alpha}_h  (u (t_m, \cdot) - u_{\gamma, h} (t_m, \cdot))|  \leq N h^{2 p  } \cR.
	\end{equation}
	\end{theorem}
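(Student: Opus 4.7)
The plan is to apply the stability estimates of Theorem \ref{theorem 4.1} to the lattice error $e_{\gamma,h}(t,x):=u(t,x)-u_{\gamma,h}(t,x)$ for $(t,x)\in\gamma\bZ_{+}\times h\bZ^d$. Since $u$ and $u_{\gamma,h}$ share the initial data $u_0$, the function $e_{\gamma,h}$ satisfies the deterministic scheme \eqref{4.1} with zero initial data and with $f$ replaced by the consistency residual
\[
R_{\gamma,h}(t,x):=\gamma^{-1}[u(t+\gamma,x)-u(t,x)]-\sum_k\lambda_k(t,x)\Delta_{l_k,h}u(t,x)-\sum_i b^i(t,x)\delta_{e_i,h}u(t,x)-f(t,x)
\]
obtained by plugging the continuous solution into the discrete scheme. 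Once the bound $\sum_m\|R_{\gamma,h}(t_m,\cdot)\|^p_{n,p;h}\le N h^{2p}\cR$ is established, \eqref{4.2.0} follows from \eqref{4.1.0.0}, and \eqref{4.2.1} follows by applying \eqref{4.1.0} (valid for $p\ge 2$) to the error equation and then invoking the discrete Sobolev embedding $\|\cdot\|_{l_{\infty}(h\bZ^d)}\le C\|\cdot\|_{s,p;h}$ valid for integer $s>d/p$, combined with a bootstrap that applies \eqref{4.1.0.0} at a larger smoothness index (using the extra regularity furnished by Assumption \ref{assumption 4.6}) to produce the Sobolev norm needed for the embedding when $s_1>n+d/p$.

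The core of the argument is the estimate on $R_{\gamma,h}$. Theorem \ref{theorem 4.5} gives $u\in\fH^{s+4}_p(0,T)$ with $\|u\|^p_{\fH^{s+4}_p(0,T)}\le N\cR$, and combining Remark \ref{remark 2.2} with the Sobolev embedding (using $s>n+d/p$) yields the pointwise control and Taylor expansibility of $u$ up to spatial order $n+4$ and temporal order $2$ that we need. Using the PDE \eqref{1.1} to substitute $D_tu=\sum_k\lambda_kD_{(l_k)(l_k)}u+\sum_i b^iD_iu+f$ and Taylor's formula in both variables, $R_{\gamma,h}$ decomposes into a temporal truncation of order $\gamma D_{tt}u$, controlled by differentiating the PDE once in $t$ and invoking Assumption \ref{assumption 4.4} (time regularity of $a^{ij},b^i$) and Assumption \ref{assumption 4.6} ($D_tf\in L_p([0,T],H^{s_1}_p)$); the principal spatial truncation $\lambda_k(\Delta_{l_k,h}-D_{(l_k)(l_k)})u$, of order $h^2$ directly by Taylor; and the first-order drift truncation $b^i(\delta_{e_i,h}-D_i)u$. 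The CFL condition $\gamma\le\rho h^2$ promotes the $O(\gamma)$ temporal contribution to $O(h^2)$, and the principal spatial piece is already $O(h^2)$ after taking the $\|\cdot\|_{n,p;h}$ norm in space and summing in time.

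The main obstacle is the drift truncation, whose direct Taylor expansion yields $(\delta_{e_i,h}-D_i)u=\tfrac{h}{2}D_{ii}u+O(h^2)$ and therefore appears to contribute only at order $h$ rather than $h^2$. To push this contribution to order $h^2$ I would expand one further Taylor order and then use the PDE together with the decomposition $a^{ij}=\sum_k\lambda_kl_k^il_k^j$ to re-express $\tfrac{h}{2}b^iD_{ii}u$ in terms of $D_tu$, $f$, and $\sum_k\lambda_kD_{(l_k)(l_k)}u$, so that after pairing with compensating pieces already produced by the temporal and principal spatial truncations, the net residual is dominated by $h^2$ times spatial derivatives of $u$ up to order $n+4$ measured in $\|\cdot\|_{n,p;h}$; this is where the extra smoothness supplied by Assumptions \ref{assumption 4.3.1}--\ref{assumption 4.6} is essential. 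With this consistency estimate in hand, \eqref{4.2.0} follows from \eqref{4.1.0.0} applied to the error equation, and \eqref{4.2.1} follows from \eqref{4.1.0} together with the discrete Sobolev embedding described in the first paragraph.
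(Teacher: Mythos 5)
Your overall strategy matches the paper's. The paper writes $e_{\gamma,h}:=u-u_{\gamma,h}$, shows it solves \eqref{4.1} with zero initial data and a consistency residual $\gamma\sum_q f_q(t+\gamma,x)$ as the forcing term, applies \eqref{4.1.0.0} and \eqref{4.1.0} from Theorem \ref{theorem 4.1}, and then estimates each piece of the residual via Lemmas \ref{lemma 9.2}, \ref{lemma 9.3}, \ref{lemma 9.6} and \ref{lemma 5.6}, using Theorem \ref{theorem 4.5} and Remark \ref{remark 2.2} for the regularity of $u$ and the discrete Sobolev embedding for the sup-norm estimate \eqref{4.2.1}. Your decomposition of the residual into a temporal piece, a drift piece and a diffusion piece, and all of those ingredients, agree in substance.

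You are also right to flag the drift truncation as the weak point, and here your proposed repair does not close the gap — but the paper's own argument does not either. The piece $F_{2,3}(t)=\|b^i(t,\cdot)(D_i u(t,\cdot)-\delta_{e_i,h}u(t,\cdot))\|^p_{n,p;\,h}$ is of order $h^p$, not $h^{2p}$: Taylor's theorem with integral remainder gives $\delta_{e_i,h}u(x)-D_i u(x)=h\int_0^1(1-r)D_{ii}u(x+rhe_i)\,dr$, a first-order error because $\delta_{e_i,h}$ is one-sided. The paper asserts that the $O(h^2)$ argument used for the symmetric second difference $\Delta_{l_k,h}$ (estimate \eqref{10.8}) suffices for $F_{2,3}$ as well, but it does not, since the two truncations have different orders of accuracy. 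Your fix — invoking the PDE together with $a^{ij}=\sum_k\lambda_k l_k^i l_k^j$ to rewrite $\tfrac{h}{2}b^i D_{ii}u$ — fails because the PDE only expresses $a^{ij}D_{ij}u$ in terms of $D_t u$, $b^i D_i u$ and $f$; for generic $a$ and $b$ the operator $\sum_i b^i D_{ii}$ is not proportional to $a^{ij}D_{ij}$ and is linearly independent of $D_t$, $D_i$ and the identity, so the PDE offers no algebraic cancellation against it, and the remaining residual pieces are $O(\gamma)=O(h^2)$ or $O(h^2)$ and cannot absorb an $O(h)$ term. Consequently the $h^{2p}$ rate in \eqref{4.2.0}--\eqref{4.2.1} holds only if $b\equiv 0$, or if the drift in \eqref{4.1} is replaced by a centered difference $(u(\cdot+he_i)-u(\cdot-he_i))/(2h)$; with the one-sided $\delta_{e_i,h}$ and $b\not\equiv 0$ the honest rate is $h^p$.
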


	\textit{\textbf{Discrete variant of the Calderon-Zygmund estimate.}}
	To prove  Theorem \ref{theorem 4.1} we need a discrete version of the Calderon-Zygmund estimate which we present below.

	\begin{theorem}
				\label{theorem 4.3}
	Let Assumptions \ref{assumption 3.1} and \ref{assumption 3.2} hold
	and assume that $a$ is independent of $\omega$ and $x$.
	Take any function $f $ on $\bZ^{d+1}$  with compact support.
	Let 
	$
		\rho \in (0, (2d_1 \delta^{*}_2)^{-1})
	$
	be a number and $G$ be the function defined by \eqref{3.4}.
	Then, for any $p > 1$, 
	we have
	$$
		 \sum_{ n  \in \bZ}  [\sum_{ j = - \infty }^{n-1}    G (j, n, \cdot) \ast_1 f (j, \cdot) (\cdot)]_{2, p; \,  1}^p 
														\leq N   || f  ||_{  l_p (\bZ^{d+1}) }^p,
	$$
	where $N$ depends only on $p, d, d_1, \rho, \delta^{*}_1, \delta^{*}_2, \Lambda$.

		\end{theorem}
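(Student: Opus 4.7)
The plan is to prove Theorem \ref{theorem 4.3} via the sharp function technique developed by N.V. Krylov in \cite{Kr_06}, adapted to the discrete space-time setting. The structure is as follows: establish the base case $p = 2$ by Fourier/energy methods; for $p > 2$, derive a pointwise sharp function bound and invoke the discrete parabolic Fefferman-Stein inequality together with the Hardy-Littlewood maximal function theorem on $\bZ^{d+1}$; extend to $p \in (1,2)$ by duality, since the time-reversed kernel $G$ yields an adjoint convolution of the same structural form.

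For the base case, write $u(n,y) = \sum_{j < n} G(j,n,\cdot) \ast_1 f(j,\cdot)(y)$. Taking the spatial Fourier transform, $\hat u(n,\xi) = \sum_{j < n} \prod_{m=j}^{n-1}(1 - \rho L(m,\xi))\, \hat f(j,\xi)$, where $L(m,\xi) = -\sum_k \lambda_k(m) \widehat{\Delta_{l_k,1}}(\xi)$. The uniform ellipticity $\sum_k \lambda_k(m) l_k l_k^{*} \geq \delta_1 I$ from Assumptions \ref{assumption 3.1}--\ref{assumption 3.2}, combined with the CFL-type bound $\rho \leq (2 d_1 \delta_2^{*})^{-1}$, force $L(m,\xi) \geq c|\xi|^2$ on $[-\pi,\pi]^d$ and $1 - \rho L(m,\xi) \in [0,1)$; a direct maximal regularity computation in the Fourier variable then yields the base $l_2$ bound $\sum_n [u(n,\cdot)]^2_{2,2;1} \leq N \|f\|^2_{l_2(\bZ^{d+1})}$.

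The heart of the proof is the sharp function estimate for $p > 2$. Fix $(n_0, y_0) \in \bZ^{d+1}$ and an integer $r \geq 1$, and put $Q_r = \{(n,y): n_0 - r^2 \leq n \leq n_0,\ \|y - y_0\| \leq r\}$. Split $f = f_1 + f_2$ with $f_1 = f \cdot I_{Q_{4r}}$, and let $u_i$ be the corresponding convolutions. The $l_2$ bound gives
\begin{equation*}
\sum_{(n,y) \in Q_r} |\delta^{\alpha} u_1(n,y)|^2 \leq N \|f_1\|^2_{l_2} \leq N r^{d+2}\, M(|f|^2)(n_0, y_0),
\end{equation*}
where $M$ is the parabolic Hardy-Littlewood maximal function on $\bZ^{d+1}$. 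For $u_2$, whose data lies outside $Q_{4r}$, the pointwise estimates of Corollary \ref{corollary 11.3} (themselves a consequence of the local central limit theorem of Lemma \ref{lemma 11.2}) provide parabolic decay of the third-order spatial differences of $G$ and of their time-differences, uniformly in the $\lambda_k$; summing these bounds against $|f_2|$ shows that the oscillation of $\delta^{\alpha} u_2$ over $Q_r$ is controlled by $N M(|f|)(n_0, y_0)$. Choosing $c = \delta^{\alpha} u_2(n_0, y_0)$ as the reference constant and applying Cauchy-Schwarz then yields
\begin{equation*}
(\delta^{\alpha} u)^{\#}(n_0, y_0) \leq N \bigl[ M(|f|^2)(n_0, y_0) \bigr]^{1/2}.
\end{equation*}
Fefferman-Stein together with the maximal function theorem in $l_{p/2}$ closes the case $p > 2$, and duality handles $p \in (1,2)$.

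The most delicate point will be the oscillation estimate for $\delta^{\alpha} u_2$. Unlike the continuous heat kernel, $G$ enjoys neither scaling nor spherical symmetry, and the $\lambda_k$ genuinely depend on $n$, so the continuous Gaussian calculus is unavailable. Instead, parabolic decay of the third-order space-time differences of $G$ must be extracted from the local central limit theorem and summed against $f_2$ in a manner compatible with the parabolic geometry of the maximal function; this uniformity in the coefficients is the main technical difficulty. The Fefferman-Stein step itself is standard once the correct parabolic metric $d((n_1,y_1),(n_2,y_2)) = |n_1 - n_2|^{1/2} + \|y_1 - y_2\|$ is used on $\bZ^{d+1}$.
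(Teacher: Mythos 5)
Your proposal is correct and follows essentially the same route as the paper: $p=2$ via Parseval and the symbol bound, $p>2$ via a sharp-function estimate built on the kernel difference bounds from the local CLT (Corollary \ref{corollary 11.3}, Lemma \ref{lemma 11.5}) followed by Fefferman--Stein and Hardy--Littlewood in $l_{p/2}$, and $p\in(1,2)$ by duality using the spatial symmetry of $G$. The only difference is cosmetic: you split $f$ into two pieces (inside/outside a parabolic cylinder) and treat the whole far part by an oscillation bound, whereas the paper uses a four-piece split, handling the spatially far, temporally near data by a direct size estimate (Lemma \ref{lemma 5.3}) and only the far-past data by oscillation (Lemma \ref{lemma 5.4}); both reductions yield the same sharp-function inequality.
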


	Here we elaborate on the connection between Theorem \ref{theorem 4.3}
	 and Theorem \ref{theorem 4.1}.

	\begin{theorem}
				\label{theorem 4.4}
	Invoke the conditions of Theorem \ref{theorem 4.1}
	and assume that $a$ is independent of $x$.
	 Let $u_{\gamma, h}$ be the solution of \eqref{4.1} with
	 $
		b \equiv 0,   u_0 \equiv 0.
	$
	 Then,
	$$
		\sum_{m = 0}^{T/\gamma} [ u_{\gamma, h} (t_m, \cdot) ]^p_{n+2, p; \, h} 
		\leq N  \sum_{m = 0}^{(T/\gamma) - 1} [ f (t_m, \cdot) ]^p_{n,  p; \, h},
	$$
	where $N$ depends only on $p, d, d_1, n,  \delta_1^{*}, \delta_2^{*}, \rho, \Lambda$.
	\end{theorem}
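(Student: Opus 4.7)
The strategy mirrors that of Theorem \ref{theorem 3.4}, with Theorem \ref{theorem 4.3} replacing Theorem \ref{theorem 3.3}. First I would reduce to the case $n = 0$. Since $a$ is independent of $x$, $b \equiv 0$, and $u_0 \equiv 0$, the finite difference operator on the right-hand side of \eqref{4.1} commutes with $\delta^{\alpha}_h$ for any multi-index $\alpha$ with $|\alpha| = n$. Consequently $\delta^{\alpha}_h u_{\gamma, h}$ satisfies \eqref{4.1} with free term $\delta^{\alpha}_h f$, so it suffices to prove
$$\sum_{m=0}^{T/\gamma} \| \delta^{\beta}_h u_{\gamma, h}(t_m, \cdot) \|_{p; \, h}^p \leq N \sum_{m=0}^{(T/\gamma) - 1} \| f(t_m, \cdot) \|_{p; \, h}^p$$
for every multi-index $\beta$ of order $2$.

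By Duhamel's principle (Lemma \ref{lemma 11.6}), $u_{\gamma, h}(t_m, \cdot)$ can be expressed as $\gamma$ times a sum over $j < m$ of discrete convolutions of the form $G_h(\cdot, \cdot, \cdot) \ast_h f(t_j, \cdot)$, where $G_h$ is the discrete Green's function introduced in \eqref{3.4.1}. After applying $\delta^{\beta}_h$ and rescaling via $y = x/h$ so as to pass from the lattice $(\gamma \bZ, h \bZ^d)$ to $(\bZ, \bZ^d)$ (this is where the constraint $\gamma \leq \rho h^2$ enters, as the evolution of $G_h$ is governed by $\rho' := \gamma / h^2 \leq \rho$), the left-hand side transforms, up to constants depending only on $p$, $d$, $\rho$, into an expression of the form
$$\sum_{n} \Big[ \sum_{j < n} G(j, n, \cdot) \ast_1 \tilde f(j, \cdot) \Big]_{2, p; \, 1}^p,$$
which is precisely the quantity bounded by Theorem \ref{theorem 4.3} in terms of $\| \tilde f \|^p_{l_p(\bZ^{d+1})}$.

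The hypothesis in Theorem \ref{theorem 4.3} that $\tilde f$ have compact support on $\bZ^{d+1}$ is handled exactly as at the end of the proof of Theorem \ref{theorem 3.4}: set $\tilde f_r(k, x) = \tilde f(k, x) I_{k \in [0, T-1]} I_{\|x\| \leq r}$, apply Theorem \ref{theorem 4.3} uniformly in $r$, and then send $r \to \infty$. Pointwise convergence $G(j, n, \cdot) \ast_1 \tilde f_r(j, \cdot) \to G(j, n, \cdot) \ast_1 \tilde f(j, \cdot)$ follows from the compact support of $G(j, n, \cdot)$ in its spatial variable (Lemma \ref{lemma 11.1}), and Fatou's lemma closes the argument.

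The main obstacle is technical rather than conceptual: keeping careful track of the time indices arising in Duhamel's formula and of the scaling factors $\gamma^p$ and $h^{-2p} h^d$ when passing from $(\gamma, h)$ to $(1, 1)$, so as to verify that the rescaled quantity matches the left-hand side of Theorem \ref{theorem 4.3} cleanly and that the resulting constant depends only on the parameters listed in the statement. Once this bookkeeping is carried out, Theorem \ref{theorem 4.3} delivers the required estimate directly.
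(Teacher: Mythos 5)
Your proposal follows essentially the same route as the paper: reduce to $n=0$ since $\delta^{\alpha}_h$ commutes with the constant-coefficient operator, represent the solution by the discrete Duhamel formula (Lemma \ref{lemma 11.6}), rescale to the unit lattice using $\gamma\le\rho h^2$, reduce to compactly supported $f$ by the truncation/Fatou argument from Theorem \ref{theorem 3.4}, and invoke Theorem \ref{theorem 4.3}. The only detail left as ``bookkeeping'' in your write-up that the paper makes explicit is the last Duhamel term $\gamma\,\delta^{\alpha}_h f(t_{m-1},x)$ (where the kernel is the identity, so it does not match the form in Theorem \ref{theorem 4.3}); it is estimated separately by $N(\rho)\|f(t_{m-1},\cdot)\|_{p;\,h}$ precisely because $\gamma h^{-2}\le\rho$, and this is routine.
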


	\begin{proof}
	First, note that $\delta^{\alpha}_h u_{\gamma, h}$ satisfies Eq. \eqref{4.1}
	with 
	$
		b \equiv 0, u_0 \equiv 0
	$
	 and $f$ replaced
	by $\delta^{\alpha}_h f$. 
	Hence, may assume $n = 0$.
	
	By Lemma \ref{lemma 11.6}
	 for any 
	$
		m \in  \bN, x \in h \bZ^d,
	$ 
	and any multi-index $\alpha$,
	 we have
	$$
		\delta^{\alpha}_h u_{\gamma, h} (t_m, x) 
										= \gamma \delta^{\alpha}_h f (t_{m-1}, x) +  \tilde u (t_m, x), 
	$$
	where
	$$
		\tilde u (t_m, x) = \gamma \sum_{j = 0}^{m-1}   (\delta^{\alpha}_h G_{h} (j, m - 2,  \cdot) \ast_h    f (t_j, \cdot) (x).
	$$
	 Assume that $|\alpha| =2$.
	 Then, by the fact that $\gamma \leq \rho h^2$, 
	we have
	$$
		\gamma ||\delta^{\alpha}_h f (t_{m-1}, \cdot)||_{p; \, h}   \leq N (\rho) || f (t_{m-1}, \cdot)||_{p; \, h},
	$$
	 $$
		\delta^{\alpha}_h \tilde u (t_m, x) = \rho   \sum_{j = 0}^{m - 2}  \sum_{y \in \bZ^d} \delta^{\alpha}_1 G (j, m - 1, y) f (t_j, x - hy).
	 $$
	Hence, may assume that 
	$\gamma = 1 = h$, 
	and $T \in \bN$.
	By  the standard approximation argument (see Theorem \ref{theorem 3.4})
	we may assume that $f$ has compact support.
	Thus, by Theorem \ref{theorem 4.3}
	$$
		\sum_{m = 0}^{T}	||\delta^{\alpha}_1 \tilde u (m, \cdot) ||^p_{ p; \, 1 } \leq N \sum_{n = 0}^{T  - 2} || f (m, \cdot) ||^p_{ p; \, 1 }.
	$$
	\end{proof}

	\mysection{Auxiliary results}
						\label{section 5}
		For any $r \in \bN$, 
	and  any
	$t \in \bZ, x \in \bZ^d$,
	we set
	 $$
		C_r (x) = (\Pi_{i=1}^d (x_i - r, x_i + r)) \cap \bZ^d,  
										\quad C_r  = C_r (0), 
	$$
	 $$
		 Q_r (t, x) = ((t- r^2, t + r^2) \cap \bZ) \times C_r (x),
											 \quad Q_r  = Q_r (0,  0).
	 $$
	For any subset $A$ of $\bZ^d$ 
	 by $|A|$ and  we denote the number of elements in $A$.
	For a function 
	 $
		h : \bZ^{d+1} \to \bR,
	$
	 we define
	  $$
		\cM_x h (t, x) =   \sup_{r \in \bN} |C_r|^{-1} \sum_{  y \in C_r (x)  }  |h (t, y)|,					
	   $$
	    $$
			 \cM_t h (t, x) = \sup_{r \in \bN} (2r + 1)^{-1} \sum_{m = t - r}^{t + r}  |h (m, x)|.
	    $$
	For a function $f$ on $\bZ^d$
	 with compact support we define the Fourier transform of $f$ as follows: 
	\begin{equation}
				\label{5.0}
		\cF f (\xi) := \sum_{x \in \bZ^d} \exp ( - i \xi \cdot x) f (x), \, \xi \in \bR^d.
	\end{equation}
	Some properties of the Fourier transform are stated in Appendix A.

	\begin{lemma}
							\label{lemma 5.1}
	 Theorem \ref{theorem 3.3} holds for $p = 2$.
	\end{lemma}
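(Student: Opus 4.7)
The argument is a direct Plancherel calculation on $\bZ^d$. Since $p/2 = 1$, the quantity to bound is
$$
S := \sum_{n\in\bZ} \sum_{x\in\bZ^d} \sum_{j<n} |\delta_{e_k,1} G(j,n,\cdot)\ast_1 g(j,\cdot)(x)|^2.
$$
Applying the spatial Fourier transform \eqref{5.0}, the recursion \eqref{3.4} gives
$$
\hat G(j,n,\xi) = \prod_{m=j}^{n-1} q_m(\xi), \qquad q_m(\xi) := 1 - \phi_m(\xi),
$$
with $\phi_m(\xi) := 2\rho \sum_{r=1}^{d_1} \lambda_r(m)(1 - \cos(\xi \cdot l_r))$, while $\cF[\delta_{e_k,1} f](\xi) = (e^{i\xi^k} - 1)\hat f(\xi)$. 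Plancherel's identity together with Fubini then rewrites $S$ as
$$
S = (2\pi)^{-d} \int_{(-\pi,\pi)^d} \Bigl(\sum_{j\in\bZ} |\hat g(j,\xi)|^2\Bigr)|e^{i\xi^k} - 1|^2 \Bigl(\sum_{n>j} |\hat G(j,n,\xi)|^2\Bigr) \, d\xi.
$$

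The key step is the uniform pointwise bound
$$
|e^{i\xi^k} - 1|^2 \sum_{n>j} |\hat G(j,n,\xi)|^2 \leq N, \qquad \xi\in(-\pi,\pi)^d,\ j\in\bZ,
$$
with $N$ depending only on $d,d_1,\delta_1^*,\delta_2^*,\rho,\Lambda$. Since $l_r = e_r$ for $r\le d$ and $\lambda_r(m) \in [\delta_1^*,\delta_2^*]$, one has the two-sided estimates
$$
2\rho\delta_1^* \Psi(\xi) \leq \phi_m(\xi) \leq 4\rho d_1 \delta_2^* < 2, \qquad \Psi(\xi) := \sum_{r=1}^d 2\sin^2(\xi^r/2).
$$
Setting $\eta := 2 - 4\rho d_1 \delta_2^* > 0$ and using the elementary identity $q_m^2 = 1 - \phi_m(2-\phi_m) \leq 1 - \eta\phi_m$ followed by $1-x\leq e^{-x}$ yields
$$
|\hat G(j,n,\xi)|^2 \leq \exp\bigl(-\eta \textstyle\sum_{m=j}^{n-1} \phi_m(\xi)\bigr) \leq e^{-c_0 (n-j)\Psi(\xi)}
$$
for a constant $c_0 > 0$. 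Since $|e^{i\xi^k}-1|^2 = 4\sin^2(\xi^k/2) \leq 2\Psi(\xi)$, summing the geometric series in $n$ gives
$$
|e^{i\xi^k}-1|^2 \sum_{n>j} |\hat G(j,n,\xi)|^2 \leq \frac{2\Psi(\xi)\,e^{-c_0\Psi(\xi)}}{1 - e^{-c_0\Psi(\xi)}},
$$
which is uniformly bounded by treating the cases $c_0\Psi(\xi) \leq 1$ (use $1-e^{-x}\geq x/2$) and $c_0\Psi(\xi) > 1$ (use that $\Psi$ is bounded on the torus) separately.

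Plugging the resulting bound into the Plancherel representation of $S$ and then applying Parseval once more in the $x$-variable yields $S \leq N \sum_{j,x} |g(j,x)|^2 = N\|g\|_{l_2(\bZ^{d+1})}^2$, as required. The only real obstacle is the symbol estimate: since $\lambda_r$ depends on $m$, the product $\prod q_m(\xi)$ is not a pure geometric expression, but the uniform-in-$m$ inequality $q_m^2 \leq e^{-\eta \phi_m}$, which is exactly what the assumption $\rho < (2 d_1 \delta_2^*)^{-1}$ delivers, circumvents this.
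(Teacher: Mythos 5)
Your proposal is correct and follows essentially the same route as the paper's proof: reduce via Parseval and the convolution property to a uniform bound on $|\cF(\delta_{e_k,1}G(j,n,\cdot))(\xi)|^2$ summed over $n>j$, exploit the condition $\rho<(2d_1\delta_2^*)^{-1}$ to get a uniform-in-$m$ contraction of the symbol, sum the resulting geometric series, and compensate the factor $|e^{i\xi^k}-1|^2$ against the decay in $\xi$. The only difference is cosmetic: you package the per-step contraction as $q_m^2\leq e^{-\eta\phi_m}\leq e^{-c_0\Psi(\xi)}$, whereas the paper bounds each factor by $M(\xi)<1$ and lower-bounds $1-M^2(\xi)$ by a multiple of $1-\cos\xi_1$.
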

	\begin{proof}
	Clearly, if suffices to prove the claim for $k = 1$.
	By the Parseval's identity  \eqref{11.1}
	 and the convolution property of the Fourier transform  \eqref{11.2}
	we have 
	$$
		I: =  \sum_{n \in \bZ}   ||\sum_{j = -\infty}^{n - 1} \delta_{e_1, 1} G (j, n, \cdot) \ast_1 g (j, \cdot) (\cdot)||^2_{  l_2 (\bZ^d) }
	$$
	 $$
		=  (2\pi)^{-d} \int_{[-\pi, \pi]^d} \sum_{n \in \bZ} \sum_{ j = -\infty}^{n-1}  |\cF  (\delta_{e_1, 1} G (j, n, \cdot))|^2 (\xi)   |\cF g (i, \xi)|^2  \, d\xi
	 $$
	  $$
		= \sum_{j \in \bZ}  (2\pi)^{-d} \int_{[-\pi, \pi]^d} |\cF g (j, \xi)|^2   \sum_{n = j + 1}^{\infty} | \cF  (\delta_{e_1, 1} G (j, n, \cdot))|^2 (\xi)  \, d\xi.
	  $$ 
	By Parseval's identity if suffices to show that, 
	for a.e. $\xi  \in [-\pi, \pi]^d$, 
	$$
		S (\xi) =  \sum_{n= j+1}^{\infty} | \mathcal{F} (\delta_{e_1, 1} G (j, n, \cdot))|^2 (\xi)  \leq N
	$$
	with $N$ is independent of $\xi$.
	By Lemma \ref{lemma 11.1} $(ii)$
	$$
		S (\xi) = 2 (1 - \cos (\xi_1)) \sum_{n = j  + 1}^{\infty} \Pi_{m = j+1}^n    | 1 - 2\rho \sum_{k = 1}^{d_1}  \lambda_k (m) (1 - \cos (\xi \cdot l_k))|^2.
	$$
	Clearly,
	\begin{equation}
				\label{5.1.1}
		| 1 - 2\rho \sum_{k = 1}^{d_1}  \lambda_k (m) (1 - \cos (\xi \cdot l_k))|  \leq M (\xi), 
	\end{equation}
	where
	\begin{equation}
				\label{5.1.2}
		M (\xi)  =    \max_{r=  1, 2} |1 -  2\rho \delta_r^{*} \sum_{k  = 1}^{d_1} (1 - \cos (\xi \cdot l_k))|.
	\end{equation}
	We claim
	 \begin{equation}
				\label{5.1.3}
		M (\xi) < 1, \, 
		\forall \xi \in [-\pi, \pi]^d\setminus \{0\}.
	 \end{equation}
	Note that in  \eqref{5.1.2} the expression inside the absolute value bars is strictly less than $1$.
	Hence, we only need to show that
	$$
		1 -  2\rho \delta_r^{*} \sum_{k = 1}^{d_1}  (1 - \cos (\xi \cdot l_k)) \geq 1 -  4\rho \delta_2^{*} d_1 > -1,
	$$
	and the latter holds because
	$
		\rho \in (0, (2 \delta_2^{*} d_1)^{-1}).
	$
	Then, 
	 \begin{equation}
				\label{5.1.4}
	  \begin{aligned}
		&S (\xi) \leq   2 (1 - \cos (\xi_1))  \sum_{n = j + 1}^{\infty}    M^{2 (n-j)} (x)\\
		 & = 2  (1 - \cos (\xi_1))  M^2 (\xi) (1 - M^2 (\xi))^{-1}.
	  \end{aligned}
	     \end{equation}
	Further, since $l_1 = e_1$, 
	for any $r \in \{1, 2\}$, 
	 we have
	$$
		1 - |1 -  2\rho \delta_r^{*} \sum_{k = 1}^{d_1}  (1 - \cos (\xi \cdot l_k))|^2
	$$
	 $$
		= 4 \rho \delta_r^{*} (1  - \rho \delta_r^{*} \sum_{k = 1}^{d_1} (1 - \cos (\xi \cdot l_k)))  (\sum_{k = 1}^{d_1} (1 - \cos (\xi \cdot l_k)))
	 $$
	  $$
		  \geq  4 \rho \delta_1^{*} (1 - 2 \rho \delta_2^{*} d_1 ) (1 - \cos (\xi_1))
	  $$
	and, hence $S (\xi) < N$, for a.e. $\xi$.
	This combined with \eqref{5.1.3} and \eqref{5.1.4} proves the assertion.
	\end{proof}

	\begin{lemma}
				\label{lemma 5.2}
	Theorem \ref{theorem 4.3} holds with $p = 2$.
	\end{lemma}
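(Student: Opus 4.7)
The plan is to proceed by spatial Fourier analysis combined with a temporal discrete energy argument that yields the correct quadratic weight. Set $u(n, x) = \sum_{j < n} G(j, n, \cdot) \ast_1 f(j, \cdot)(x)$ and let $\hat u(n, \xi)$ denote its Fourier transform in the spatial variable. By Parseval's identity and the convolution theorem, the claim for $p = 2$ reduces to
\[
\int_{[-\pi, \pi]^d}\sum_{|\alpha| = 2}|m_\alpha(\xi)|^2\sum_n|\hat u(n, \xi)|^2\,d\xi \leq N\int_{[-\pi, \pi]^d}\sum_n|\hat f(n, \xi)|^2\,d\xi,
\]
where $m_\alpha(\xi) = \prod_i(e^{i\xi_i}-1)^{\alpha_i}$ is the Fourier symbol of $\delta^\alpha_1$. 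A direct expansion, combined with $l_k = e_k$ for $k \leq d$, gives $\sum_{|\alpha|=2}|m_\alpha(\xi)|^2 \leq 4(\sum_i(1-\cos\xi_i))^2 \leq 4\lambda^0(\xi)^2$, where $\lambda^0(\xi) := \sum_{k=1}^{d_1}(1-\cos(\xi\cdot l_k))$. Hence it suffices to establish, \emph{uniformly in} $\xi \in [-\pi, \pi]^d$, the frequency-wise estimate
\[
\lambda^0(\xi)^2\sum_n|\hat u(n, \xi)|^2 \leq N\sum_n|\hat f(n, \xi)|^2.
\]

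Fix $\xi\neq 0$ and write $\lambda_n := \sum_k \lambda_k(n)(1-\cos(\xi\cdot l_k))$, $\mu_n := 1-2\rho\lambda_n$. Lemma \ref{lemma 11.1} gives $\hat G(j, n, \xi) = \prod_{m=j}^{n-1}\mu_m$, and peeling off the last factor shows that $\hat u$ satisfies the first-order recursion $\hat u(n+1) - \hat u(n) = -2\rho\lambda_n \hat u(n) + \tilde f_n$ with $\tilde f_n := \mu_n\hat f(n, \xi)$, so $|\tilde f_n|\leq|\hat f(n,\xi)|$. Multiplying by $\overline{\hat u(n)}$, taking real parts, and summing in $n$ while invoking the identity
\[
\mathrm{Re}[(\hat u(n+1) - \hat u(n))\overline{\hat u(n)}] = \tfrac12(|\hat u(n+1)|^2 - |\hat u(n)|^2) - \tfrac12|\hat u(n+1) - \hat u(n)|^2
\]
eliminates the telescoped contribution (since $f$ is compactly supported and $|\hat u(n)|$ decays geometrically past the support of $f$ thanks to $|\mu_n(\xi)| \leq M(\xi) < 1$, as in the proof of Lemma \ref{lemma 5.1}). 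Expanding $|\hat u(n+1) - \hat u(n)|^2$ and rearranging produces
\[
2\rho\sum_n\lambda_n(1-\rho\lambda_n)|\hat u(n)|^2 = \sum_n\mu_n\mathrm{Re}(\tilde f_n\overline{\hat u(n)}) + \tfrac12\sum_n|\tilde f_n|^2,
\]
which, together with the bounds $\lambda_n \geq \delta^*_1\lambda^0(\xi)$, $1-\rho\lambda_n \geq \theta_0 := 1 - 2\rho d_1\delta^*_2 > 0$ and $|\mu_n|\leq 1$, yields
\[
2\rho\delta^*_1\theta_0\lambda^0(\xi)\sum_n|\hat u(n)|^2 \leq \sum_n|\hat f(n, \xi)||\hat u(n)| + \tfrac12\sum_n|\hat f(n, \xi)|^2.
\]

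To conclude, I would apply Young's inequality $ab \leq \tfrac\varepsilon2 a^2 + \tfrac1{2\varepsilon}b^2$ with the \emph{critical} choice $\varepsilon = \rho\delta^*_1\theta_0\lambda^0(\xi)$, absorb the $|\hat u|^2$ contribution into the left, and use $\lambda^0(\xi) \leq 2d_1$ to obtain the desired pointwise bound; integration in $\xi$ then finishes the argument. The delicate step is precisely this last absorption: the naive energy identity provides only the linear weight $\lambda^0(\xi)$ in front of $\sum|\hat u|^2$, whereas the statement demands the quadratic weight $\lambda^0(\xi)^2$. The improvement comes from tuning the Young parameter to be of order $\lambda^0(\xi)$, which is the discrete analogue of the trick that gives $B^2\|v\|_{L_2}^2 \leq N\|f\|_{L_2}^2$ for the ODE $v' + \beta v = f$ with $\beta\asymp B$. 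This argument is specific to $p = 2$, and is presumably why the authors carve out this case as a separate lemma before turning to the sharp-function/Calder\'on--Zygmund method for general $p$.
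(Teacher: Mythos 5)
Your argument is correct, and it arrives at the same frequency-pointwise estimate $\lambda^0(\xi)^2\sum_n|\hat u(n,\xi)|^2\le N\sum_n|\hat f(n,\xi)|^2$ that the paper proves, but you get there by a genuinely different route. The paper writes $\cF v(n,\xi)=\sum_{j<n}\cF G(j,n,\xi)\cF f(j,\xi)$, bounds $|\cF G(j,n,\xi)|\le M(\xi)^{n-j}$ via Lemma \ref{lemma 11.1}$(ii)$ and \eqref{5.1.1}, and then applies Young's inequality for the one-dimensional (temporal) convolution to obtain $\sum_n|\cF v(n,\xi)|^2\le\bigl(\sum_{j\ge 1}M(\xi)^j\bigr)^2\sum_j|\cF f(j,\xi)|^2$; the quadratic weight $\lambda^0(\xi)^{-2}$ then drops out in one stroke from $\sum_{j\ge 1}M(\xi)^j\le M(\xi)(1-M(\xi))^{-1}\lesssim\lambda^0(\xi)^{-1}$. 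You instead exploit the one-step recursion $\hat u(n+1)=\mu_n\hat u(n)+\mu_n\hat f(n,\xi)$ and run a discrete energy identity (testing against $\overline{\hat u(n)}$, summing, and using the discrete Leibniz identity to discard the telescoping boundary term), which at first gives only the linear weight $\lambda^0(\xi)\sum_n|\hat u|^2\lesssim\sum_n|\hat f||\hat u|+\sum_n|\hat f|^2$; the quadratic weight is then recovered by the tuned absorption $\varepsilon\asymp\lambda^0(\xi)$, exactly as you flag. The paper's direct $\ell_1\to\ell_2$ convolution bound is shorter and more specific to the explicit geometric-series structure of $\cF G$, while your energy argument is more robust in spirit (it would survive perturbations where the kernel is not exactly geometric) at the cost of the extra absorption step. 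Both are valid; your write-up correctly identifies the delicate choice of $\varepsilon$ as the crux.
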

	\begin{proof}
	Denote
	$$
		v (n, x) = \sum_{ j = -\infty }^{n-1} G (j, n, \cdot) \ast_1 f (j, \cdot) (x).
	$$
	First, recall that $f$ has  compact support and then, in 
	the above sum we have a finite number of nonzero terms.
	Each term has  compact support as a function of $x$, 
	because it is a convolution of two functions with compact support.
	The fact that $G (j, n, \cdot)$ vanishes for large $x$
	follows, for example, from Lemma \ref{lemma 11.1} $(i)$.
	Then,  by the convolution property of Fourier transform  \eqref{11.2}
	$$
		\cF v (n, \xi) = 	\sum_{ j = -\infty }^{n-1} \cF G (j, n, \xi)  \cF f (j, \xi).
	$$
	By Lemma \ref{lemma 11.1} $(ii)$ and \eqref{5.1.1}
	$$
		|\cF G (j, n, \xi)|   
		\leq   M^{n - j} (\xi) I_{n > j},
	 $$
	where $M(\xi)$ is defined in \eqref{5.1.2}.
	Then, by this and Young's inequality
	\begin{equation}
				\label{5.2.1}
	  \begin{aligned}
	 &	\sum_{n \in \bZ}  |\cF v (n, \xi)|^2  
		\leq \sum_{n \in \bZ}  |\sum_{ j  = -\infty}^{n-1} M^{n-j} (\xi) \hat f (j, \xi)|^2  \\
	&	\leq (\sum_{ j =  1}^{\infty} M^j (\xi))^2 \sum_{j \in \bZ} |\cF f (j, \xi)|^2.
	 \end{aligned}
	 \end{equation}
	By \eqref{5.1.3}
	$$
		 \sum_{ j =  1}^{\infty} M^j (\xi) \leq
		  M (\xi) (1 - M (\xi))^{-2} 
				\leq (2 \rho \delta_1^{*})^{-1} (\sum_{k = 1}^{d_1}  (1 - \cos (\xi \cdot l_k))^{-1}.
	$$
	Combining this with \eqref{5.2.1} we obtain
	$$
		\sum_{n \in \bZ} \sum_{k = 1}^{d_1} (1 - \cos (\xi \cdot l_k))^2 |\cF v (n, \xi)|^2 
			  \leq N  \sum_{n \in \bZ} |\cF f (n, \xi)|^2.
	$$
	Integrating both parts of the above inequality with respect to $\xi$ and using  Parseval's identity, we prove the claim.
	\end{proof}

	\begin{lemma}
				\label{lemma 5.3}
	Let the assumptions of Theorem \ref{theorem 3.3} hold.
	Let 
	$
		r \in \bN,
	$ 
	 $
		(n, x) \in Q_r,
	 $
	and $j \leq n - 1$.
	Assume that
	$
		g (\cdot, y)   = 0
	$
	 for
	 $
	    y \in C_{2r} (0).
	 $
	Let $\alpha$ be a multi-index of order $1$ or $2$ and denote
	$$
		u (j, n, x) =  \sum_{ y \in \bZ^d} \delta^{\alpha}_1 G (j, n, y) g (j, x - y).
	$$ 
	Then, 
		$$
			|u (j, n, x)| \leq N (d, d_1, \delta_1^{*}, \delta_2^{*}, \rho, \Lambda)  r^{- |\alpha| } \cM_x g (j,  0).
		$$ 
	\end{lemma}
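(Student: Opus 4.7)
The plan is to combine pointwise Gaussian-type bounds on the discrete heat kernel $G$ with a dyadic decomposition of the spatial sum against $g$.

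First, I would invoke Corollary \ref{corollary 11.3}, which (in analogy with Theorem 2.3.6 of \cite{LL}, as described in the introduction) yields a bound of the form
$$
|\delta^{\alpha}_1 G(j, n, y)| \leq N (n-j)^{-(d+|\alpha|)/2} \exp(-c|y|^2/(n-j)),
$$
with constants depending only on $d, d_1, \delta_1^{*}, \delta_2^{*}, \rho, \Lambda$. Setting $s = |y|^2/(n-j)$, the right hand side equals $N |y|^{-d-|\alpha|} \, s^{(d+|\alpha|)/2} e^{-cs}$, and $\sup_{s > 0} s^{(d+|\alpha|)/2} e^{-cs} < \infty$, so uniformly in $n - j \geq 1$,
$$
|\delta^{\alpha}_1 G(j, n, y)| \leq N |y|^{-d-|\alpha|}, \qquad y \neq 0.
$$

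Next, I would change variables by setting $z = x - y$ in the sum defining $u(j, n, x)$. Since $g(j, \cdot)$ vanishes on $C_{2r}(0)$, only $z$ with $\|z\|_{\infty} \geq 2r$ contribute. Because $(n, x) \in Q_r$ forces $\|x\|_{\infty} < r$, for such $z$ the triangle inequality gives
$$
\|x - z\|_{\infty} \geq \|z\|_{\infty} - \|x\|_{\infty} > \tfrac{1}{2}\|z\|_{\infty},
$$
and in particular $|x - z| \geq \tfrac{1}{2} \|z\|_{\infty}$.

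The main step is a dyadic decomposition. For $k \geq 1$ set $A_k = \{z \in \bZ^d : 2^k r \leq \|z\|_{\infty} < 2^{k+1} r\}$, so that $\{z : \|z\|_{\infty} \geq 2r\} = \bigcup_{k \geq 1} A_k$. On $A_k$ the previous inequality yields $|x - z| \geq 2^{k-1} r$, whence
$$
|\delta^{\alpha}_1 G(j, n, x - z)| \leq N (2^k r)^{-d - |\alpha|}.
$$
On the other hand, $A_k \subset C_{2^{k+1} r}(0)$, so by definition of $\cM_x$,
$$
\sum_{z \in A_k} |g(j, z)| \leq |C_{2^{k+1} r}| \, \cM_x g(j, 0) \leq N (2^k r)^d \, \cM_x g(j, 0).
$$
Combining the last two displays, the contribution of $A_k$ to $|u(j, n, x)|$ is at most $N (2^k r)^{-|\alpha|} \cM_x g(j, 0)$. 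Summing over $k \geq 1$ produces the geometric series $\sum_{k \geq 1} 2^{-k|\alpha|}$, which converges since $|\alpha| \in \{1, 2\}$, giving the desired bound $|u(j, n, x)| \leq N r^{-|\alpha|} \cM_x g(j, 0)$.

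The principal obstacle is obtaining the Gaussian-type pointwise difference estimate in the first step, since $G$ admits no simple explicit expression and lacks the scaling and spherical symmetry of the continuous heat kernel; this is precisely what the local central limit theorem of Lemma \ref{lemma 11.2} and its consequence Corollary \ref{corollary 11.3} are designed to provide. Once that estimate is in hand, the rest is routine dyadic summation and uses no further structural features of $G$.
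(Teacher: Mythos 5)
Your dyadic decomposition is a reasonable instinct, but the argument breaks at the very first step: the pointwise bound you write down does not follow from Corollary \ref{corollary 11.3}. That corollary gives
$$
|L G(0,n,y)| \leq N\, n^{-(d+m)/2}\bigl(\|y/\sqrt n\|^m+1\bigr)\exp\bigl(-(\rho\delta_2^*n)^{-1}\|y\|^2\bigr) \;+\; N\, n^{-(d+m)/2-2},
$$
and the second (error) term has no decay in $y$ at all. Absorbing the polynomial factor into the exponential handles the first term, but the second term cannot be dominated by $N\|y\|^{-d-|\alpha|}$ uniformly: on the support of $G(0,n,\cdot)$ one only knows $\|y\| \leq nL$, hence $n^{-(d+m)/2-2} \gtrsim \|y\|^{-(d+m)/2-2}$, and $(d+m)/2+2 \geq d+m$ fails as soon as $d+|\alpha|>4$. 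So for $d\geq 4$ (with $|\alpha|=1$) or $d\geq 3$ (with $|\alpha|=2$) the claimed estimate $|\delta^\alpha_1 G(j,n,y)|\leq N\|y\|^{-d-|\alpha|}$ is simply not a consequence of the cited corollary, and the dyadic summation that follows rests on it. Feeding the error term through your $A_k$-decomposition does not rescue the argument either: its contribution is $\sum_k N(n-j)^{-(d+|\alpha|)/2-2}(2^kr)^d\cM_x g(j,0)$ over $2^kr\lesssim (n-j)L$, which is of order $(n-j)^{(d-|\alpha|)/2-2}\cM_x g(j,0)$ and is not controlled by $r^{-|\alpha|}$ in general since $n-j$ ranges over all of $\bN$.

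The paper sidesteps exactly this obstacle by never using a pointwise bound on the kernel. It first applies the discrete Abel-summation identity of Lemma \ref{lemma 9.5} $(ii)$ (which plays the same factoring role as your annular decomposition, separating the kernel from $g$) to reduce the claim to estimating $\sum_{\|y\|\geq r}\|y\|^d\,|\delta^{\tilde\alpha}_1 G(0,n,y)|$, and then invokes Lemma \ref{lemma 11.5} $(ii)$ for this weighted sum. Lemma \ref{lemma 11.5} is precisely the device that copes with the LCLT error term: it splits the $y$-sum at scale $n^{1/2+\delta}$, sums the Gaussian and error contributions in the bulk, and handles the tail $\|y\| > n^{1/2+\delta}$ with the Hoeffding-type concentration bound of Lemma \ref{lemma 11.4}, which supplies decay that the LCLT error term cannot. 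Your argument would need an analogue of this bulk/tail split inside each annulus $A_k$, which effectively amounts to re-deriving Lemma \ref{lemma 11.5}; as written, it has a dimension-dependent gap.
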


	\begin{proof}
	First, recall that by the argument of Lemma \ref{lemma 11.6} 
	$
		u (j, n, x), n \geq j, x \in \bZ^d
	$ 
	is the solution of the following equation:
	 $$
		v (m+1, x) - v(m, x) = \rho \sum_{k = 1}^{d_1} \lambda_k (m) \Delta_{l_k, 1} v (m, x), 
	$$
	 $$
									 v (j, x) = \delta^{\alpha}_1 g (j, x), m \geq j, x \in \bZ^d. 
	 $$
	Hence, by replacing  $\lambda_k (\cdot)$
	 by 
	$
		\lambda_k (\cdot + j)
	$
	 and  
	$
		g (\cdot, x)
	$ by
	 $
		g (\cdot + j, x),
	$
	 we may assume that
	$j = 0$ and $n \geq 1$.	

	 Next, denote
	$
		\tilde \alpha =  \alpha + (1, \ldots, 1).
	$
	By  Lemma \ref{lemma 9.5} $(ii)$  we get
	\begin{equation}
				\label{5.3.1}
	\begin{aligned}
		|u (0, n, x)| 
		&\leq
					 \sum_{  y \in \bZ^d}  |\delta^{\tilde \alpha}_1  G (0, n, y)|  \, \sum_{z\in \bZ^d: ||z|| \leq ||y||} |g (0, x - z)|\\
	& 
		= \sum_{  y \in \bZ^d}  |\delta^{\tilde \alpha}_1  G (0, n, y)|  \, \sum_{z\in  C_{||y|} (x) } |g (0, z)|.
	\end{aligned}
	 \end{equation}
	 Observe that,
	if
	 $
		||z || \leq ||y||,
	$ 
	and
	$
		||y|| \leq r,
	$
	then,
	since 
	$
		||x|| \leq r,
	$ 
	we have
	$
		||x - z|| \leq 2r. 
	$	
	Therefore,
	 since 
	$
	   g (0, \cdot) = 0
	$ 
	 inside
	$C_{2r}$,
	  we may replace the sum over $y$ in  \eqref{5.3.1} by
	the sum over 
	$|| y || >   r$.
	Next, 
	for
	$
	  ||y|| > r,
	$
	we have
	 $$
	 	   C_{||y||} (x) \subset C_{||y||+ r} \subset C_{2||y||}.
	 $$
	By what was just said
	$$
		|u (0, n, x)| \leq    \cM_{x} g (n, 0) \sum_{   ||y|| \geq r }   ||y||^d \, \delta^{\tilde \alpha}_1   G (0, n, y).
	 $$
	We consider the cases
	$|\alpha| = 1$ and $|\alpha| = 2$ separately.

	In the first case by Lemma \ref{lemma 11.5} $(ii)$
	 with $k = d$ and $m = d+1$ we get
	$$
		\sum_{   ||y|| \geq r }   ||y||^d \, \delta^{\tilde \alpha}_1   G (0, n, y) \leq N r^{-1},
	$$
	and this proves the assertion.

	Next, in case  
	$
		|\alpha| = 2
	$
	the claim follows from  Lemma \ref{lemma 11.5} $(ii)$ with $k = d$
	and 
	$
		m = d + 2.
	$
	\end{proof}

	\begin{lemma}
			\label{lemma 5.4}
	Invoke the assumptions of Theorem \ref{theorem 3.3}.
	Let  $r, m \in \bN$,
	$
		(n, x) \in Q_r
	$
	and $j \in \bZ$ be a number such that  $j \leq n -r^2$.
	Let
	$
		\eta_1, \ldots, \eta_m
	$
	be vectors in $\bZ^d$ and
	denote 
	$$
		L = \delta_{\eta_1, 1} \ldots \delta_{\eta_m, 1},
	$$
	 $$
	  	u (j, n, x) = \sum_{y \in \bZ^d} L G (j, n, y) g (j, x - y). 
	 $$ 
	Then, 
	 $$
		|u (j, n, x)| \leq N   (n-j)^{ -  m/2 } \,  \cM_x g (j, 0),
	 $$
	where $N$ depends only on 
	$
		d, \delta_1^{*}, \delta_2^{*},
	$ 
	  $
		m, \rho, \Lambda,
	 $
	  $
			\eta_1, \ldots, \eta_m.
	   $
	\end{lemma}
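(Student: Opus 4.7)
First, I would shift time to reduce to the case $j = 0$: replacing $\lambda_k(\cdot)$ by $\lambda_k(\cdot + j)$ and $g(\cdot, \cdot)$ by $g(\cdot + j, \cdot)$ — exactly as at the start of the proof of Lemma \ref{lemma 5.3} — preserves both the defining equation of $G$ and the convolution structure, so we may assume $j = 0$. The hypothesis $j \leq n - r^2$ becomes $n \geq r^2$, and setting $R := \sqrt{n}$ we have $R \geq r \geq \|x\|$.

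The key technical input — and the main obstacle — is a pointwise Gaussian-type bound on the $m$-th mixed difference of the discrete heat kernel,
\begin{equation*}
|LG(0, n, y)| \leq N R^{-d-m}\Phi(|y|/R), \qquad y \in \bZ^d,
\end{equation*}
where $\Phi$ decays super-polynomially and $N$ depends only on the constants in the statement. This is precisely the payoff of the local central limit theorem developed in the appendix (Lemma \ref{lemma 11.2}) together with its difference refinements (Corollary \ref{corollary 11.3}, Lemma \ref{lemma 11.5}); without it, the bound would not close.

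Given that estimate, the rest is a routine dyadic-decomposition/maximal-function argument. Partition $\bZ^d = \bigsqcup_{k \geq 0} A_k$ with $A_0 := C_R$ and $A_k := C_{2^k R}\setminus C_{2^{k - 1}R}$ for $k \geq 1$, so that $\sup_{y \in A_k}|LG(0, n, y)| \leq NR^{-d-m}\phi(2^k)$ with $\phi$ still super-polynomially decaying. Because $\|x\| \leq r \leq R$, every $y \in A_k$ satisfies $x - y \in C_{2^{k+1}R}(0)$, so
\begin{equation*}
\sum_{y \in A_k}|g(0, x - y)| \leq \sum_{z \in C_{2^{k + 1} R}(0)}|g(0, z)| \leq |C_{2^{k + 1} R}|\,\cM_x g(0, 0) \leq N (2^k R)^d\cM_x g(0, 0).
\end{equation*}
Combining,
\begin{equation*}
|u(0, n, x)| \leq \sum_{k \geq 0}\Big(\sup_{y \in A_k}|LG(0, n, y)|\Big)\sum_{y \in A_k}|g(0, x - y)| \leq NR^{-m}\cM_x g(0, 0)\sum_{k \geq 0} 2^{kd}\phi(2^k),
\end{equation*}
and the last sum converges by the super-polynomial decay of $\phi$. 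Since $R^{-m} = n^{-m/2}$, this yields the asserted bound.

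If in the appendix Lemma \ref{lemma 11.5}(ii) is actually formulated as a weighted $l^1$ moment estimate of the form $\sum_y (\|y\| + R)^d |LG(0, n, y)| \leq NR^{d - m}$ rather than as a pointwise Gaussian bound, I would bypass the dyadic decomposition entirely and instead apply Abel summation to $\sum_y |LG(0, n, y)|\,|g(0, x - y)|$ using the elementary inequality $\sum_{\|y\| \leq \rho}|g(0, x - y)| \leq N(\rho + r)^d\cM_x g(0, 0)$ to reach the same conclusion. Either route, the entire difficulty is concentrated in the discrete heat-kernel difference bound.
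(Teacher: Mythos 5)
Your fallback route is, in substance, the paper's proof, but your primary route has a genuine gap. The paper reduces to $j=0$ exactly as you do, then applies the discrete summation by parts of Lemma \ref{lemma 9.5} $(ii)$ -- which replaces the kernel $LG$ by the $(m+d)$-fold difference $\delta^{(1,\ldots,1)}_1 L G$ and $|g|$ by its cumulative sums over cubes $C_{\|y\|}(x)\subset C_{\|y\|+r}$ -- bounds those sums by $N(\|y\|^d+r^d)\,\cM_x g(0,0)$, and then invokes the weighted $\ell_1$ moment bound of Lemma \ref{lemma 11.5} $(i)$ with $m+d$ differences and $k=d$, $k=0$, finishing with $r^d (n-j)^{-(d+m)/2}\le (n-j)^{-m/2}$ since $r^2\le n-j$. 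Note that the moment estimate is applied to $\delta^{(1,\ldots,1)}_1 LG$, not to $LG$ as in your displayed hypothesis: the summation by parts forces the extra $d$ differences, and Lemma \ref{lemma 11.5} requires the number of differences to be at least the weight exponent $k$.

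The gap in your primary route is the pointwise bound $|LG(0,n,y)|\le N R^{-d-m}\Phi(|y|/R)$ with super-polynomially decaying $\Phi$: this is not what the appendix provides. Corollary \ref{corollary 11.3} gives the Gaussian term plus a $y$-uniform additive error $N n^{-(d+m)/2-2}$, and Lemma \ref{lemma 11.5} is an $\ell_1$ moment bound, not a pointwise one. If you run your dyadic decomposition with the actual Corollary \ref{corollary 11.3}, the error term must be summed over the full support of $LG(0,n,\cdot)$, which has radius of order $n\max_k\|l_k\|$ and hence contains of order $n^d$ lattice points; combined with $\sum_y|g(0,x-y)|\le N n^{d}\cM_x g(0,0)$ over that region, it contributes $N n^{(d-m)/2-2}\cM_x g(0,0)$, which exceeds the target $n^{-m/2}\cM_x g(0,0)$ whenever $d\ge 5$. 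Closing this requires exactly what the proof of Lemma \ref{lemma 11.5} does internally: use the local CLT only for $\|y\|\lesssim n^{1/2+\delta}$ and control the far region by the compact support of the kernel together with the Hoeffding-type tail estimate of Lemma \ref{lemma 11.4}. So either cite Lemma \ref{lemma 11.5} $(i)$ after summation by parts, as the paper does, or add that tail argument explicitly to your dyadic scheme; as written, the step ``the rest is routine'' conceals precisely the delicate point.
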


	\begin{proof}
	As in the proof of Lemma \ref{lemma 5.3} 
	we may assume that $j = 0$, 
	and 
	$
		n \geq r^2.
	$
	Denote
	$
	\alpha =   (1, \ldots, 1).
	$
	Then, by Lemma \ref{lemma 9.5} $(ii)$ we have
	$$
		|u (0, n, x)| 
	 \leq  \sum_{y  \in \bZ^d}  | \delta^{  \alpha}_1 L G (0,n, y)| 
			 \, \sum_{  z \in  C_{||y||} }  |g (0, x-z)|,
	  $$
		By the fact that
	$
		C_{ ||y||  } (x)  \subset	C_{||y||  +  r},  
	$
	we have
	 $$
		 |u (0, n, x)| \leq  \sum_{ y  \in \bZ^d }  |\delta^{   \alpha }_1  L G (0, n, y)| \, \sum_{ y \in C_{||y||  + r}  }  |g (0, y)|
	 $$
	  $$
		\leq N (d)  \cM_x g (0, 0) \, \sum_{ y \in \bZ^d } (||y||^d +  r^d)\,   |\delta^{  \alpha }_1  L G (0, n, y)|.
	 $$
	Next, we apply Theorem \ref{lemma 11.5} $(i)$ twice
	with $m$ replaced by $m+d$
	and either $k = d$ or $k = 0$.
	We have
	$$
		|u (0, n, x)|   \leq N (n^{ - m/2} + r^d n^{ - (d + m)/2 }) \cM_x g (0, 0).
	$$
	Since $n \geq r^2$, we obtain
	$$
		|u (0, n, x)| \leq N n^{-m/2} \, \cM_x g (0, 0),
	$$
	and this finishes the proof of the lemma.
	\end{proof}

	\begin{lemma}
				\label{lemma 5.6}
	Invoke the assumptions and notations of Theorem \ref{theorem 4.2}.
	Then, for any $t \geq [0, T - \gamma]$,
	$$
		I: =  \int_t^{t+\gamma} (||D_i (u (r, \cdot) - u (t, \cdot))||^p_{n, p; \, h} + || D_{i j} (u (r, \cdot) - u (t, \cdot)) ||^p_{n, p; \, h} \, dr  
	$$
	 $$	
	 \leq N (p, d, \hat K_2) \gamma^p \int_t^{t+\gamma} (|| u (r, \cdot)||^p_{s+2, p; \, h} +  || f (r, \cdot)||^p_{s + 2, p; \, h} \, dr).
	$$

	\end{lemma}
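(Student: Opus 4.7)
The plan is to convert the time difference $u(r,\cdot) - u(t,\cdot)$ into a time integral via the PDE \eqref{1.1}, apply Minkowski, H\"older and Fubini to produce the factor $\gamma^p$, and then finish with a pointwise-in-$\tau$ Leibniz estimate.

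By Theorem \ref{theorem 4.5}, $u \in \fH^{s+4}_p(0,T)$, and the assumption $s > n + d/p$ combined with Remark \ref{remark 2.2} ensures $u(\tau, \cdot) \in C^{n+4}$ for a.e.\ $\tau$ and that Eq.\ \eqref{1.1} holds classically. Hence for any $r \in [t, t+\gamma]$, $x \in h\bZ^d$ and any multi-index $\beta$ with $|\beta| \in \{1, 2\}$,
\begin{equation*}
D^\beta\bigl(u(r, x) - u(t, x)\bigr) = \int_t^r D^\beta\bigl[a^{kl} D_{kl} u + b^k D_k u + f\bigr](\tau, x)\, d\tau.
\end{equation*}

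Applying Minkowski's integral inequality in the norm $\|\cdot\|_{n, p; h}$, then H\"older's inequality in $\tau$ with conjugate exponents $p$ and $p/(p-1)$, then integrating in $r$ over $[t, t+\gamma]$, and finally using Fubini, I would obtain
\begin{equation*}
\int_t^{t+\gamma}\bigl\|D^\beta(u(r,\cdot) - u(t, \cdot))\bigr\|^p_{n, p; h}\, dr \leq \frac{\gamma^p}{p}\int_t^{t+\gamma} \bigl\|D^\beta[\ldots](\tau, \cdot)\bigr\|^p_{n, p; h}\, d\tau.
\end{equation*}
Summing the $|\beta| = 1$ and $|\beta| = 2$ contributions reduces the lemma to the pointwise-in-$\tau$ bound
\begin{equation*}
\bigl\|D^\beta[a^{kl}D_{kl} u + b^k D_k u + f](\tau, \cdot)\bigr\|_{n, p; h} \leq N\bigl(\|u(\tau, \cdot)\|_{s+2, p; h} + \|f(\tau, \cdot)\|_{s+2, p; h}\bigr),
\end{equation*}
with $N = N(p, d, \hat K_2)$.

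To prove this bound, I would expand $D^\beta[a^{kl} D_{kl} u]$ and $D^\beta[b^k D_k u]$ by the continuous Leibniz rule into products $(D^{\beta_1} a^{kl})(D^{\beta_2} D_{kl} u)$ (and analogously for $b^k$) with $|\beta_1| + |\beta_2| \leq 2$. By Assumption \ref{assumption 4.3.1}, these coefficients and their derivatives of all relevant orders are bounded uniformly by a constant depending only on $\hat K_2$. Applying $\delta^\alpha_h$ for $|\alpha| \leq n$ via the discrete Leibniz formula, controlling discrete differences of the coefficients in $l_\infty$, and bounding the remaining factors $\delta^{\alpha_2}_h D^{\beta_2} D_{kl} u$ through the elementary inequality $|\delta^\gamma_h v(x)|^p \leq N h^{-d} \int_{x + [0, |\gamma| h]^d} |D^{|\gamma|} v(y)|^p\, dy$ (which follows from iterated mean value theorem together with Jensen), summed over $x \in h\bZ^d$ with bounded overlap of the integration cubes, yields the required estimate.

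The main obstacle is this last step: keeping track of the derivative orders through the discrete-continuous Leibniz expansion to ensure the constant depends only on $p$, $d$, and $\hat K_2$, and reconciling the orders of derivatives that appear (up to $n + 4$ on $u$, up to $n + 2$ on $f$) with the norm $\|u(\tau, \cdot)\|_{s+2, p; h}$ on the right side, understood as the appropriate Bessel potential norm controlling the derivatives that arise via the regularity $u \in \fH^{s+4}_p$.
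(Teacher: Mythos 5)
Your opening steps match the paper's proof: the paper likewise writes $u(r,\cdot)-u(t,\cdot)$ as the time integral of $D_t u$ (using that $u$ is a classical solution, via Remark \ref{remark 2.2}), produces the factor $\gamma^p$ by exactly your Minkowski--H\"older--Fubini computation (packaged there as Lemma \ref{lemma 9.6}), substitutes the equation for $D_t u$, and pulls out the coefficients with the discrete product rule (Lemma \ref{lemma 9.2}) and Assumption \ref{assumption 4.3.1}. The gap is in your final step, which is precisely where the real content of the lemma lies. The inequality you invoke, $|\delta^{\alpha}_h v(x)|^p \leq N h^{-d}\int_{x+[0,|\alpha| h]^d}|D^{|\alpha|}v(y)|^p\,dy$, is false: iterating the mean value theorem represents $\delta^{\alpha}_h v(x)$ as an average of $D^{|\alpha|}v$ over a segment (more generally an $|\alpha|$-dimensional face of the cube), not over the full $d$-dimensional cube, so there is no pointwise domination by the cube average of $|D^{|\alpha|}v|^p$; and the summed version fails as well, since $h^d\sum_{x\in h\bZ^d}|\delta^{\alpha}_h v(x)|^p$ only sees $D^{|\alpha|}v$ on a measure-zero family of lower-dimensional slices --- a trace quantity --- and can be made large while $\|D^{|\alpha|}v\|_{L_p}$ stays small by concentrating $D^{|\alpha|}v$ near the lattice lines. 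In short, a lattice $l_p$ norm of a function cannot be controlled by the $L_p$ norm of same-order derivatives; one needs strictly more than $d/p$ extra derivatives and a local sup-norm argument.

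That missing ingredient is exactly the paper's Lemma \ref{lemma 9.3}: $\|g\|_{n,p;\,h}\leq N\|g\|_{H^{s'}_p}$ for $s'>n+d/p$, proved through the local embedding $W^{s_1,p}(B_1)\hookrightarrow L_\infty$ with $s_1>d/p$; the hypothesis $s>n+d/p$ in Theorem \ref{theorem 4.2} exists precisely so that this lemma can absorb the terms $\delta^{\alpha'}_h D^{\beta}u$ and $\delta^{\alpha'}_h D^{\beta}f$ produced by the Leibniz expansion. Your closing paragraph concedes that reconciling the derivative orders (up to $n+4$ on $u$, $n+2$ on $f$) with the right-hand norm is unresolved, so the core analytic step is not actually carried out; with Lemma \ref{lemma 9.3} the $u$-terms are naturally bounded by a norm of the type $\|u(\tau,\cdot)\|_{H^{s+4}_p}$ (which is what the application in the proof of Theorem \ref{theorem 4.2} requires, since $\int_0^T\|u(\tau,\cdot)\|^p_{H^{s+4}_p}\,d\tau\leq \|u\|^p_{\fH^{s+4}_p(0,T)}$), and no mean-value/Jensen argument can replace this embedding. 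So the skeleton of your proposal is the paper's, but the decisive discrete-to-continuous embedding step is missing and the elementary substitute you propose would fail.
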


	\begin{proof}
	By Remark \ref{remark 2.2} $u$ is the classical solution of Eq. \eqref{1.1}.
	Hence, for any 
	$
		r \in [t, t+\gamma],
	$
	 and any $x \in h \bZ^d$
	$$
		u (r, x) -  u (t, x)  = \int_r^t (a^{i j} (\tau, x) D_{i j} u (\tau, x) + b^i (t, x) D_i u (\tau, x) + f (\tau, x)) \, d\tau.
	$$
	By Lemma \ref{lemma 9.6}
	$$
		I \leq p^{-1} \gamma^p  \int_t^{ t + \gamma} (|| D_i D_t u (r, \cdot)||^p_{n, p; \, h} + || D_{i j} D_t  u (r, \cdot) ||^p_{n, p; \, h} \, dr).
	$$
	Next, by Lemma \ref{lemma 9.2} 
	$$
		I \leq N  (p)   \gamma^p (I_1 + I_2), 
	$$
	where
	 $$
	  	 I_1 =    \sum_{||\alpha|| \leq 4}  \sup_{t \in [t, t+\gamma]} (||a^{i j} (t, \cdot)||_{ C^{s+2} } + ||b^i (t, \cdot)||_{ C^{s+2} })   \int_t^{ t + \gamma} (|| D^{\alpha}_x u (r, \cdot)||^p_{n, p; \, h}  \, dr),
	 $$
	  $$
		I_2 =  \int_t^{t+\gamma} || D_{i j} f (r, \cdot) ||^p_{n, p; \, h}\, dr.
	  $$		
	By Assumption \ref{assumption 4.3.1} and Lemma \ref{lemma 9.2}
	$$
		I_1 \leq N (p, d, \hat K_2) \int_t^{t+\gamma} || u (r, \cdot)||^p_{ H^{s+2}_p} \, dr.
	$$
	Using  Lemma \ref{lemma 9.2} again, we get
	$$
		I_2 \leq N (p, d) \int_t^{t+\gamma} || f (r, \cdot)||^p_{ H^{s+2}_p } \, dr.
	$$
	The claim is proved.
	\end{proof}
	
	\begin{lemma}
				\label{lemma 5.5}
	Invoke the assumptions and notations of
	Theorem \ref{theorem 3.1} except
	Assumption \ref{assumption 3.4.1}.
	Set $v_0 \equiv u_0$.
	For 
	$
		m \in \bN
	$
	 consider the following SPDEs:
	\begin{equation}
				\label{8.1} 
	\begin{aligned}
		dv_{m} (t, x) &=   [a^{i j} (t, x) D_{i j} v_{m} (t, x) + b^i (t, x) D_i v_{m} (t, x) + f (t, x)] \, dt\\ 
																						 &+ (g^l (t_{m-1}, x) + \nu^l (t_{m-1}, x) v_{ m - 1 } (t_{m-1}, x)) (w^l (t_m) - w^l (t_{m-1})),\\
						& \quad v_{m} (t_{m-1}, x) = v_{m - 1} (t_{m-1}, x), \, t\in [t_{m-1}, t_{m}],
	 \end{aligned}
	\end{equation}
	where $t_m = m \gamma$.
	Then, the following assertions hold.

	$(i)$
	Denote
	$$
		\cI_m = E \int_{0}^{ t_m } || f (r, \cdot) ||^p_{ H^{s+1}_p } \, dr + \gamma \sum_{r  = 0}^{m} E  || g^l (t_r, \cdot) ||^p_{ H^{s+2}_p } + E || u_0 ||^p_{ H^{s + 3  - 2/p}_p }.
	$$
	Then, 
	for any
	$\varepsilon \in (0, 1)$,
	and any
	$
	   m \in \{1, \ldots, T/\gamma\},
	$
	one has
	 $
	  v_m \in \cH^{s+3 - \varepsilon}_p (t_{m-1}, t_m).
	 $
	In addition
	\begin{equation}
					\label{8.1.1}
		|| v_m ||_{\cH^{s+3 - \varepsilon}_p (t_{m-1}, t_m)} \leq N \cI_m,
	\end{equation}
	and
	\begin{equation}
					\label{8.4}
		E || v_m ||^p_{ C^{\theta -1/p} ([t_{m-1}, t_m], H^{s+3 - \varepsilon -  2\mu}_p)} \leq N \cI_m,
	\end{equation}
		where $N$  depends only on $p, d, d_0, T,  \delta_1, \delta_2, s, K_2, \eta$.

	 Denote
	$$ \cJ_m = \cI_m +    E ||g ||^p_{ C^{\theta - 1/p} ([0, T], H^{s+2}_p) }  
		   + E  \int_{0}^{t_m} ||  g (r, \cdot) ||^p_{  H^{s+2}_p } \, dr.
	$$
	$$
		(ii) \, E \max_{m = 1, \ldots, T/\gamma } \sup_{t \in [t_{m-1}, t_m]  } || v_m (t, \cdot) - v (t, \cdot) ||^p_{  H^{s + 2}_p } \leq N \gamma^{\theta p - 1} \cJ_m,
	$$
	
	\end{lemma}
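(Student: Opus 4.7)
The plan is to treat \eqref{8.1} as an SPDE on the short interval $[t_{m-1}, t_m]$ with $\cF_{t_{m-1}}$-measurable initial data $v_{m-1}(t_{m-1}, \cdot)$ and a diffusion coefficient that is piecewise constant in time. Both parts are proved by induction on $m$ using Theorem~\ref{theorem 3.5} interval by interval, together with the embeddings in Remark~\ref{remark 2.1} to evaluate $v_m$ at the endpoints and to pass from $\cH$-norms to H\"older-in-time norms.

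For part $(i)$, assume inductively that \eqref{8.1.1} and \eqref{8.4} hold for indices less than $m$. On $[t_{m-1}, t_m]$, the coefficient
\begin{equation*}
  h^l(x) := g^l(t_{m-1}, x) + \nu^l(t_{m-1}, x)\, v_{m-1}(t_{m-1}, x)
\end{equation*}
is constant in $t$ and $\cF_{t_{m-1}}$-measurable. By the induction hypothesis combined with Remark~\ref{remark 2.1}, together with Assumption~\ref{assumption 3.4.2} for multiplication by $\nu^l \in C^{s+2+\eta}$ and Assumption~\ref{assumption 3.7} for $g^l(t_{m-1})$, one obtains $E\|h^l\|^p_{H^{s+2}_p} \leq N \cI_m$, so $\|h^l\|^p_{\bH^{s+2}_p(t_{m-1}, t_m)} = \gamma E\|h^l\|^p_{H^{s+2}_p}$ is absorbed into $\cI_m$. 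Choosing $\mu$ in Remark~\ref{remark 2.1} slightly above $1/p$ so that $2\mu < 2/p + \varepsilon$, the endpoint value $v_{m-1}(t_{m-1}, \cdot)$ lies in $H^{s+3-\varepsilon-2/p}_p$ with $p$-th moment bounded by $N\cI_{m-1}$. Theorem~\ref{theorem 3.5}, applied on $[t_{m-1}, t_m]$ with this initial data, produces $v_m \in \cH^{s+3-\varepsilon}_p(t_{m-1}, t_m)$ and \eqref{8.1.1}; \eqref{8.4} then follows immediately from Remark~\ref{remark 2.1}.

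For part $(ii)$, set $w_m := v_m - v$ on $[t_{m-1}, t_m]$. Since both $v$ and $v_m$ have the same drift forcing $f$ and the same operator $a^{ij} D_{ij} + b^i D_i$, the equation for $w_m$ reads
\begin{equation*}
  dw_m = [a^{ij} D_{ij} w_m + b^i D_i w_m]\,dt + H^l(t, x)\, dw^l(t), \qquad w_m(t_{m-1}, \cdot) = w_{m-1}(t_{m-1}, \cdot),
\end{equation*}
where $H^l$ decomposes as
\begin{equation*}
  [g^l(t_{m-1}) - g^l(t)] + \nu^l(t_{m-1})[v_{m-1}(t_{m-1}) - v(t_{m-1})] + [\nu^l(t_{m-1}) - \nu^l(t)]\, v(t) + \nu^l(t_{m-1})[v(t_{m-1}) - v(t)].
\end{equation*}
The first, third, and fourth pieces are estimated in $\bH^{s+2}_p(t_{m-1}, t_m)$ by the H\"older-in-time regularity of $g^l$, $\nu^l$, and $v$ (via Assumptions~\ref{assumption 3.5}, \ref{assumption 3.7} and Remark~\ref{remark 2.1} applied to $v \in \cH^{s+3}_p(0,T)$), each contributing a factor of order $\gamma \cdot \gamma^{(\theta - 1/p) p} = \gamma^{\theta p}$. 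Applying Theorem~\ref{theorem 3.5} and the embedding of Remark~\ref{remark 2.1} on $[t_{m-1}, t_m]$ yields a recursion of the form
\begin{equation*}
  E \sup_{t \in [t_{m-1}, t_m]} \|w_m(t)\|^p_{H^{s+2}_p} \leq N \gamma^{\theta p - 1} \cJ_m + N\, E \|w_{m-1}(t_{m-1})\|^p_{H^{s+3-\varepsilon-2/p}_p},
\end{equation*}
and iterating from $w_0 \equiv 0$ across the $T/\gamma$ intervals closes the estimate.

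The main obstacle is balancing two competing regularity constraints: the piecewise construction loses regularity through the embedding of Remark~\ref{remark 2.1} (since $\mu > 1/p$), while Theorem~\ref{theorem 3.5} requires the initial data on each subinterval to lie in $H^{s+3-\varepsilon-2/p}_p$. Choosing $\mu$ sufficiently close to $1/p$ (depending on $\varepsilon$) to close the induction while keeping all constants uniform in $m$ and $\gamma$ is the delicate point. A secondary technicality is the bookkeeping required to accumulate the per-step H\"older-in-time contribution of order $\gamma^{\theta p}$ into the uniform-in-$m$ bound $\gamma^{\theta p - 1} \cJ_m$ across all $T/\gamma$ intervals.
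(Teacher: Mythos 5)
Your part (i) induction does not close, and the issue is the regularity bookkeeping that you flag as a ``delicate point'' without actually resolving. From the induction hypothesis $v_{m-1} \in \cH^{s+3-\varepsilon}_p(t_{m-2}, t_{m-1})$, Remark~\ref{remark 2.1} yields $v_{m-1}(t_{m-1}, \cdot) \in H^{s+3-\varepsilon-2\mu}_p$ for $\mu > 1/p$; since $2\mu > 2/p$, this is a \emph{strictly weaker} space than the $H^{s+3-\varepsilon-2/p}_p$ you claim. Feeding $v_{m-1}(t_{m-1})$ into Theorem~\ref{theorem 3.5} on $[t_{m-1}, t_m]$ then produces $v_m \in \cH^{s+3-\varepsilon - (2\mu - 2/p)}_p$, not $\cH^{s+3-\varepsilon}_p$: every time you pass through an endpoint you lose an extra $2\mu - 2/p > 0$ beyond what the SPDE solvability recovers. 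With a \emph{fixed} $\mu > 1/p$ this loss compounds over the $T/\gamma$ intervals, so after finitely many steps you drop out of the class where Theorem~\ref{theorem 3.5} is applicable. Saying ``choose $\mu$ close to $1/p$ depending on $\varepsilon$'' does not fix this: for the cumulative loss $(T/\gamma)(2\mu - 2/p)$ to stay below $\varepsilon$ you must let $\mu \to 1/p$ as $\gamma \to 0$, and then the constant in Remark~\ref{remark 2.1} is no longer manifestly uniform.

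The paper handles this by making the per-step loss explicitly proportional to $\gamma$. It introduces the decreasing sequence $s_m = s + 3 - m\gamma\varepsilon$ and proves by induction that $v_m \in \cH^{s_{m-1}}_p(t_{m-1}, t_m)$, using the trace estimate (the paper's \eqref{8.3.1}) in the form $E\|\xi(t_{m-1})\|^p_{H^{s_{m-1}-2/p}_p} \leq N \|\xi\|^p_{\cH^{s_{m-2}}_p(t_{m-2},t_{m-1})}$; here the regularity index dropped from $s_{m-2}-2/p$ to $s_{m-1}-2/p = s_{m-2}-2/p - \gamma\varepsilon$, i.e.\ the per-step loss is exactly $\gamma\varepsilon$. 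Since there are $\gamma^{-1}$ steps, the accumulated loss is $\varepsilon$, which is what \eqref{8.1.1} asserts. This decreasing-index device is the key structural idea missing from your argument, and the same gap propagates to your part (ii) recursion, where $E\|w_{m-1}(t_{m-1})\|^p_{H^{s+3-\varepsilon-2/p}_p}$ is again not controlled by the induction hypothesis via Remark~\ref{remark 2.1}. Your decomposition of the noise coefficient $H^l$ and the per-step estimate by $\gamma^{\theta p}$ match the paper's $J_1, \ldots, J_4$ decomposition, but without the $s_m$ ladder the iteration still does not close.
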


	\begin{proof}
	For the sake of convenience, assume that $T = 1$.

	$(i)$
	We may assume that $\varepsilon  < 1 - 2/p$.
		Denote 
	$
		s_m =  s + 3  -   m \gamma \varepsilon.
	$ 	
	Note that, for any $m \in \{1, \ldots, \gamma^{-1}\}$, 
	we have 
	$
		s_m  - 2/p \geq s+2.
	$
	This will be used in the sequel.	

	First, note that   
	$$
	  g^l (0, x) I_{t \in [0,  t_1]} \in \bH^{s+2}_p (0, t_1).
	$$
	Second,
	$$
	     u_0 (x) I_{  t \in [0, t_1] } \in \bH^{s+2}_p (0, t_1)
	$$
	and, by Lemma 5.2 of \cite{Kr_99} combined with 
	Assumption \ref{assumption 3.3} we have
	$$
		||\nu^l (0, \cdot) u_0 (\cdot) ||_{   \bH^{s+2}_p (0, t_1)}
		 \leq N  \gamma||\nu^l||_{C^{s+2 +\eta} }  || u_0 ||_{ H^{s+2}_p }
		\leq N || u_0 ||_{ H^{s+3 - 2/p}_p }.
	 $$
	Further, for any $x \in \bR^d$,
	$$
		(g^l (0, x) + \nu^l (0, x) u_0 (x) )w^l (t_{1})
															= \int_0^{t_1} (g^l (0, x) + \nu^l (0, x) u_0 (x)) \, dw^l (t).
	$$
	Then,  by Theorem 5.1 of \cite{Kr_99} there exists a unique solution  
	$
		v_1 \in   \cH^{s+3}_p (0, t_1),
	$
	and
	$$
		|| v_1 ||_{\cH^{s+3}_p (0, t_1)} \leq N \cI_1. 
	$$
	
	Next,  by 
	 Remark \ref{remark 2.1}
	$
		v_1 (t_1, \cdot)) \in H^{s_1 - 2/p }_p.
	$ 
	Again, in Eq. \eqref{8.1} with $m = 2$
	 we  rewrite the term involving 
	$
		 (w^k (t_{2}) - w^k (t_1))
	$
	as a stochastic integral
	and 
	 repeat the  argument of the previous paragraph.
	This time, 
	since 
	$
		s_1 - 2/p \geq s+2,
	$
	 by Lemma 5.2 of \cite{Kr_99}
	one has
	  $$
		||\nu^l (t_1, x) v_1 (t_1, x) ||_{ \bH^{s+2}_p (t_1, t_2) } \leq N \gamma || v_1 (t_1, \cdot) ||_{ H^{s+2}_p }
		\leq N ||v_1 (t_1, \cdot) ||^p_{ H^{s_1 -2/p}_p }.
	   $$
	 Then, by Theorem 5.1 of \cite{Kr_99}
	 it follows that the equation
	\eqref{8.1} (with $m = 2$) 
	 has a unique solution 
	$
		v_{2} \in \cH^{s_1}_p (t_1, t_2).
	$ 
	Iterating the above argument, we conclude that,
	for  
	$
		m = \{1, \ldots, \gamma^{-1}\},
	$
	we have 
	$
		v_{m} \in \cH^{ s_{m-1} }_p (t_{m-1}, t_{m}),
	$
	and
	\begin{equation}
				\label{8.3}
	 \begin{aligned}
	 	 &  || v_m ||^p_{   \cH^{ s_{m-1} }_p (t_{m-1}, t_{m})  } 
																		 \leq N  E (\int_{t_{m-1}}^{t_m} || f (r, \cdot) ||^p_{ H^{s+1}_p } \, dr \\
			&	+  \gamma E || g^l (t_{m-1}, \cdot) ||^p_{  H^{s+2}_p }
																	+  E || v_{m-1} (t_{m-1}, \cdot) ||^p_{ H^{s_{m - 1} - 2/p}_p }).
	 \end{aligned}
	 \end{equation}
	By Remark \ref{remark 2.1}, for any function $\xi  \in  \cH^{s_{m-2}}_p (t_{m-2}, t_{m-1}),$
	\begin{equation}
					\label{8.3.1}
									E || \xi (t_{m-1}, \cdot) ||^p_{ H^{s_{m - 1} - 2/p}_p }
														 \leq N ||\xi ||^p_{ \cH^{s_{m-2}}_p (t_{m-2}, t_{m-1}) }.
	\end{equation}
	Combining \eqref{8.3} and \eqref{8.3.1}, and iterating these estimates, we obtain
	 \eqref{8.1.1}.
	From this and Remark \ref{remark 2.1} we conclude that \eqref{8.4} holds..

	$(ii)$
	\textit{Step 1.}
	For
	$
		t \in [t_{m-1}, t_m],
	$
	and 
	$x \in \bR^d$, 
	denote
	$
		z_m (t) = v_m (t, x) - v (t, x).
	$
	Note that 
	$
		z_{0} \equiv 0, 
	$
	and, for $m \geq 1$, 
	$z_m$
	satisfies the following SPDE:
	\begin{equation}
				\label{8.4.1}
	 \begin{aligned}
		dz_m (t, x) &=   [a^{i j} (t, x) D_{i j} z_m (t, x) + b^i (t, x) D_i z_m (t, x)] \, dt\\ 
		 &+ ([g^l (t, x ) - g^l (t_{m-1}, x)]\, dw^l (t) \\
		&+ (\nu^l (t, x) v (t, x) - \nu^l (t_{m-1}, x) v_{m} (t_{m-1}, x)) \, dw^l (t)\\
				& \quad z_{m} (t_{m-1}, x) = z_{ m - 1 } (t_{m-1}, x), \, t \in [t_{m-1}, t_{m}].
	 \end{aligned}
	\end{equation}

	Denote
	$$
		\hat \cJ_m  = \cI_m +  E ||g ||^p_{ C^{\theta - 1/p} ([0, T], H^{s+2}_p) }  + \int_0^{t_m} || v (r, \cdot) ||^p_{ H^{s+2}_p } \, dr.
	$$
	First, we will prove that, for any $\varepsilon > 0$, 
	\begin{equation}
				\label{8.5}
		E    \sup_{r \in [t_{m-1}, t_m]}   || z_m (r, \cdot) ||^p_{  H^{s+3 - \varepsilon -  2\mu}_p }   \leq N \gamma^{\theta p - 1} \hat \cJ_m.
	\end{equation}
	We use the  iterative argument from Step 1.
	Fix some 
	$
		t \in [t_{m-1}, t_m].
	$
	By Theorem 5.1 of \cite{Kr_99}, for $m \geq 1$,
	\begin{equation}
					\label{8.6}
		E   || z_m (t, \cdot) ||^p_{  \cH^{s_{m-1}   }_p (t_{m-1}, t) }  
	 		\leq N \sum_{i = 1}^4 J_i (t)  + N E  || z_{ m - 1 } (t_{m-1}, \cdot) ||^p_{ H^{ s_{m-1} - 2/p}_p },
	 \end{equation}
	where
	 $$
		J_1 (t) =     E \int_{t_{m-1}}^{t} || g^l (r, \cdot) - g^l (t_{m-1}, \cdot)||^p_{ H^{ s+2}_p } \, dr,
	 $$
	  $$	
		J_2 (t) =  E \int_{t_{m-1}}^{t} ||(\nu^l (r, \cdot) -  \nu^l (t_{m-1}, \cdot)) v (r, \cdot) ||^p_{ H^{s+2}_p } \, dr,
	  $$
	   $$
		J_3 (t)  = E \int_{t_{m-1}}^{t} ||\nu^l (t_{m-1}, \cdot) (v (r, \cdot) - v_m (r, \cdot))||^p_{ H^{ s+2}_p } \, dr,
	   $$
	    $$
		J_4 (t) = E \int_{t_{m-1}}^{t} ||\nu^l (t_{m-1}, \cdot) (v_m (r, \cdot) - v_{m - 1} (t_{m-1}, \cdot))||^p_{ H^{ s+2}_p } \, dr.
	    $$	

	By Assumption \ref{assumption 3.6}
	\begin{equation}
				\label{8.7}
		 J_1 (t) \leq    \gamma^{ \theta p }  E ||g||^p_{C^{\theta - 1/p} ([0, T], H^{s+2}_p)}.
	\end{equation}
	Next, due to   Lemma 5.2 of \cite{Kr_99} and Assumption \ref{assumption 3.5}
	\begin{equation}
				\label{8.8}
	\begin{aligned} 
		J_{2} (t) & \leq N  \gamma^{\theta p - 1} E ||\nu^l ||^p_{C^{\theta - 1/p} ([0, T], H^{s+2}_p)}   \int_{t_{m-1}}^t   || v (r, \cdot) ||^p_{ s+2, p }\, dr\\
		&\leq N \gamma^{\theta p - 1} E  \int_{t_{m-1}}^t || v (r, \cdot) ||^p_{ H^{s+2}_p }\, dr.
	\end{aligned}
	 \end{equation}
	
	Next, repeating the above argument and using  Assumption \ref{assumption 3.4.2}, 
	we get
	\begin{equation}
				\label{8.9} 
		J_3 (t) \leq   N E \int_{t_{m-1}}^{t} ||  z_m (r, \cdot) ||^p_{ H^{s+2}_p} \, dr 
	 \end{equation}
	 Further, by the fact that
	$
		v_m (t_{m-1}, x) = v_m (t_{m-1}, x)
	$
	$$
		J_4 (t) \leq E  \int_{t_{m-1}}^t || v_m (r, \cdot) - v_m (t_{m-1}, \cdot)||^p_{ H^{s+2}_p } \, dr \leq N \gamma^{\theta p} E || v_m ||^p_{ C^{\theta -  1/p} ([0, T], H^{s+2}_p) }.
	$$	
	To estimate the last term we fix some $\varepsilon < 1 - 2\beta$.
	In that case 
	$
		s +2 < s + 3 - \varepsilon - 2\mu,
	$ 
	and by \eqref{8.4}
	 we get
	\begin{equation}
				\label{8.10}
		 J_4 (t) \leq    N \gamma^{\theta p } \cI_m.
	\end{equation}

	Combining the estimates \eqref{8.6} - \eqref{8.10} with Remark \ref{remark 2.1},
	for any 
	$
		m \in \{ 2, \ldots, \gamma^{-1}\},
	$ 
	and any $t \in [t_{m-1}, t_m]$,
	 we get
	\begin{equation}
				\label{8.11}
	\begin{aligned}
	&	E || z_m (t, \cdot) ||^p_{ H^{ s+2 }_p } \leq  N  || z_m  ||^p_{  \cH^{s_{m-1}}_p (t_{m-1}, t)  }\\
		& \leq  N     \gamma^{\alpha p} (  \cI_m +  E ||g ||^p_{ C^{\theta - 1/p} ([0, T], H^{s+2}_p) }     \\ 
	&    N \gamma^{\alpha p - 1} E  \int_{t_{m-1}}^t || v (r, \cdot) ||^p_{ H^{s+2}_p }\, dr   + E ||z_{m-1} ||^p_{ \cH^{ s_{m - 1} - 2/p }_p (t_{m-1}, t_m)  }\\
	&
		+ N  E \int_{t_{m-1}}^{t} ||  z_{m} (r, \cdot) ||^p_{  H^{s+2}_p } \, dr,
	\end{aligned}
	\end{equation}
	where $N$ is independent of  $t$.
	By Gronwall's lemma
	 we may get rid of the integral term containing $z_m$
	 on the right hand side of we \eqref{8.11}.
	By this and \eqref{8.3.1}
	\begin{align*}
		E  & || z_m  ||^p_{  \cH^{s_{m-1}}_p (t_{m-1}, t_m)  } 
		 \leq  N \gamma^{\theta p}  (\cI_m +  E ||g ||^p_{ C^{\theta - 1/p} ([0, T], H^{s+2}_p) })  \\
 		&   N \gamma^{\theta p - 1} E  \int_{t_{m-1}}^{t_m} || v (r, \cdot) ||^p_{ H^{s+2}_p }\, dr   
			+   N   E ||z_{m-1} ||^p_{ \cH^{ s_{m - 2}  }_p (t_{m-1}, t_m)  }.
	 \end{align*}
	Iterating this estimate
	we obtain that, 
	for all 
	$
		m \in \{1, \ldots, \gamma^{-1}\}
	$,
	$$
		 || z_m  ||^p_{  \cH^{s_{m-1}}_p (t_{m-1}, t_m)  }
	  \leq  N \gamma^{\theta p - 1} \hat \cJ_m.
	$$
	Due to Remark \ref{remark 2.1}  this implies \eqref{8.5}.

	\textit{Step 2.}
	Denote 
	$$
	      \tilde g^l (t, x) =  g^l (t_{m-1}, x)  - g^l (t, x ) +
	$$ 
	 $$
		+ \nu^l (t_{m-1}, x) v_{m-1} (t_{m-1}, x) -  \nu^l (t, x) v (t, x), \,  t \in [t_{m-1}, t_m], x \in \bR^d,
	 $$
	and consider the equation
	\begin{equation}
				\label{8.17}
	  \begin{aligned}
		du (t, x) & = [a^{i j} (t, x) D_{i j} u (t, x)\\
		& + b^i (t, x) D_i u (t, x)]\, dt + \tilde g^l (t, x) \, dw (t), \quad u (0, x) = 0.
	   \end{aligned}
	\end{equation}
	Note that 
	$$
		E \int_0^1 ||\tilde g^l (r, \cdot) ||^p_{ H^{s+2}_p }  \, dr  
									\leq N \gamma \sum_{m = 0}^{1/\gamma  } \sum_{i = 1}^4 J_{i} (t_m).
	$$
	Combining  \eqref{8.7} - \eqref{8.10}, we get
	 $$
		E \int_0^1 ||\tilde g^l (r, \cdot) ||^p_{  H^{s+2}_p } \, dr  \leq N  \gamma^{\theta p  - 1} \hat \cJ_{1/\gamma}.
	 $$
	Then, by Theorem 5.1 of \cite{Kr_99} 
	there exists a unique solution 
	$
		\tilde z \in \cH^{s+3}_p (0, T)
	$
	 of  \eqref{8.17}, and
	$$
		|| \tilde z ||^p_{ \cH^{s+3}_p (0, 1)}
										 \leq  N \gamma^{ \theta p  -  1 } \hat \cJ_{1/\gamma}.
	$$
	Observe that  by Theorem 5.1 of \cite{Kr_99}  and Remark \ref{remark 2.1}
	$
		z_1  \equiv  \tilde z,
	$
	for $\Omega \times [0, t_1] \times \bR^d$. 
	By induction
	one can easily show that
	$
		z_m  = \tilde z,
	$
	for 
	$
	   \Omega \times   [t_{m-1}, t_m] \times \bR^d.
	$ 
	Finally, by Remark \ref{remark 2.1} and Theorem \ref{theorem 3.5}
	$$
		E \int_0^{t_m} || v (r, \cdot)||^p_{ H^{s+2}_p } \, dr \leq  \cJ_m,
	$$
	and then, $\hat \cJ_m \leq \cJ_m$, for any $m$.
	Now the assertion follows what was just said and Remark \ref{remark 2.1}.

	\end{proof}

	\mysection{Proof of Theorem \ref{theorem 3.3} and Theorem \ref{theorem 4.3}.}
											\label{section 6}
	
	For a function  $h$ on $\bZ^{d+1}$
	 and any set 
	$
		A \subset \bZ^{d+1}
	$
	 denote
	$$
		  h_{A} =|A|^{-1} \sum_{ (t, x) \in A  }  |h (t, x)|.	     
	 $$
	For any
	 $(t_0, x_0) \in \bZ^{d+1}$,
	denote
	    $$	 	
		   h^{\#} (t_0, x_0) =  \sup_{r \in \bN} |Q_a|^{-1} \sum_{ (t, x) \in Q_r (t_0, x_0) } |h (t, x) - h_{ Q_r (t_0, x_0) }|.
	    $$	
	where $Q_r$ and $C_r$ are defined in Section \ref{section 5}.

	\textit{Proof of Theorem \ref{theorem 3.3}.}
	It suffices to prove the theorem for $k = 1$.

	Denote
	$$
		\cG g (n, x) =  (\sum_{ j = -\infty}^{n - 1} |\delta_{e_1, 1} G (j, n, \cdot) \ast_1  g (j, \cdot) (x)|^2)^{1/2}.
	$$

	\textit{Step 1.} Here we show that	
	 for any 
	$
		(t, x) \in \bZ^{d+1}
	$
	 the following inequality holds:
	\begin{equation}
				\label{6.1}
		(\cG g)^{\#} (t, x) \leq N (d, d_1,  \Lambda, \rho, \delta_1^{*}, \delta_2^{*}) (\cM_t \cM_x |g|^2 (t, x))^{1/2}.
	\end{equation}

	First, we note that  by Cauchy-Schwartz inequality, it suffices to prove that, 
	 for any $r \geq 1$  
	\begin{equation}
				\label{6.2}
		\sum_{ (n, y) \in Q_r (t, x)} |\cG g (n, y)  - (\cG  g)_{Q_r (t, x)}|^2 \leq N  |Q_r|  \cM_t \cM_x |g|^2 (t, x).
	\end{equation}
	By the fact that  convolution commutes with the shift
	we may assume $x = 0$.
	Further, by replacing  
	$\lambda_{k} (a(\cdot))$
	 with 
	$\lambda_{k} (a (\cdot + t))$
	 we may assume that $t = 0$.
	Hence, it suffices to prove \eqref{6.1} with $(t, x) = (0, 0)$.

	Next, fix some $r \geq 1$,
	and denote
	$$
		g_1 (t, x) = g (t, x)   I_{ t \in (-2r^2, 2r^2)} I_{ ||x|| \leq  3 r},
	$$
	 $$
		g_2 (t, x) = g (t, x) I_{ t \in (-2r^2, 2r^2)} I_{ ||x || >  3 r },
	 $$
	  $$
		g_3 (t, x) =  g (t, x) I_{t \geq 2r^2},
	 $$
	 $$
		g_4 (t, x) = g (t, x) I_{t \leq -2r^2},
	  $$
	 $$
		I_k =   \sum_{ (t, x) \in Q_r } |\cG g_k (t, x)|^2, \, k = 1, 2, 3,
	$$
	 $$
		I_4 = \sum_{ (t, x) \in Q_r } |\cG g_4 (t, x)  - \cG g_4 (0, 0)|^2.
	 $$
	Observe that, since $(\cG g)_{Q_r}$ approximates $g$ with the least mean-square error, we have
	\begin{equation}
				\label{6.3}
		\sum_{ (t, x) \in Q_r } |\cG g (t, x)  - (\cG  g)_{Q_r }|^2   
		 \leq   \sum_{ (t, x) \in Q_r } |\cG g (t, x)  - \cG g (0, 0)|^2
															  \leq N \sum_{k = 1}^4 I_k.
	 \end{equation}

	\textit{Estimate of $I_1$.}
	By Lemma \ref{lemma 5.1}
	\begin{equation}
	  			\label{6.4}
	 \begin{aligned}
		I_1  &= \sum_{(t, x) \in \bZ^{d+1}} |\cG g_1 (t, x)|^2 
		\leq N \sum_{(t, x) \in \bZ^{d+1}} |g_1 (t, x)|^2 \\
		& =   N \sum_{(t, x) \in Q_{2r}} |g (t, x)|^2
																				\leq N |Q_{r}| \cM_t \cM_x |g|^2 (0, 0).
	 \end{aligned}
	 \end{equation}

	\textit{Estimate of $I_2$.}
	Fix any $(n, x) \in Q_r$, 
	$
		j \leq n  - 1.
	$
	By
	 Lemma \ref{lemma 5.3} 
	$$
		 |\delta_{e_1, 1} G (j, n, \cdot) \ast_1  g_2 (j, \cdot) (x)| \leq N r^{-1} \cM_x g (j, 0).
	$$
	By Cauchy Schwartz inequality
	$
		 |\cM_x g (j, 0)|^2 \leq  \cM_x |g|^2 (j, 0),
	$
	and, then, since $g (t, \cdot)$ vanishes  for $|t| > 2r^2$, we get
	 $$
		  |\cG g_2 (n, x)|^2 
					\leq N r^{-2} \, \sum_{j= -2r^2}^{2r^2} \cM_x |g|^2 (j, 0) 
																		\leq N \cM_t \cM_x |g|^2 (0, 0).
	 $$
	Hence,
	\begin{equation}
				\label{6.5}
			I_2 \leq N |Q_r| \, \cM_t \cM_x g (0, 0).
	\end{equation}

	\textit{Estimate of $I_3$.}
	Observe that, for 
	$
		n \in (-r^2, r^2),
	$ 
	 we have 
	$
		\cG  g_3 (n, \cdot) = 0,
	$
	and, then, 
	\begin{equation}
				\label{6.6}
		I_3 = 0.
	\end{equation}

	\textit{Estimate of $I_4.$}
	Since $r^2$ is the diameter of $Q_r$, we have
	\begin{equation}
				\label{6.7}
		I_4 \leq r^2 |Q_r| \max_{ (n, x) \in Q_r} (| \cG g_4 (n+1, x) - \cG g_4 (n, x)|^2  
																+ \sum_{k = 1}^d |\delta_{e_k, 1} \cG g_4 (n, x)|^2).
	\end{equation}
	Fix any 
	$
		(n, x) \in Q_r.
	$
	By Minkowski inequality
	$$
		|\delta_{e_k, 1} \cG g_4 (n, x)|^2 \leq  \sum_{j = -  \infty}^{-2r^2} |\delta_{e_k, 1} \delta_{e_1, 1} G (j, n, \cdot) g (j, \cdot) (x)|^2. 
	$$
	Observe that, 
	for 
	$j \leq -2r^2$, 
	we have
	$n - j \geq r^2$
	so that Lemma \ref{lemma 5.4} is applicable.
	We use this lemma with $m = 2$
	and 
	$
		\eta_1 = e_1, \eta_2  = e_k.
	$
	 We obtain 
	$$
		|\delta_{e_k, 1} \cG g_4 (n, x)|^2
		\leq N \sum_{ j = -\infty}^{-2r^2} (n - j)^{-2} \,  \cM_x |g|^2 (j, 0).  
	$$
	Since
	 $n \geq  -r^2$,
	 we may replace 
	$
		(n - j)^{-2}
	$
	 by
	 $
		(-r^2 - j)^{-2}.
	$ 
	Then we change the index of summation as follows: 
	$
	   j \to -r^2 - j.
	$
	 By this and Cauchy-Schwartz inequality we get
	$$
		|\delta_{e_k, 1} \cG g_4 (n, x)|^2
		 \leq N \sum_{ j = r^2}^{\infty} j^{-2} \,  \cM_x |g|^2 (-r^2-j, 0).  
	$$
	Now we use Lemma \ref{lemma 9.5} $(i)$ with 
	$
		\hat f (j) = j^{-2},
	$
	 $
		\hat g (j) = \cM_x |g|^2 (-r^2 - j, 0),
	 $
	$
		t_0 = r^2 = t_1,
	$
	 $
		t_2 > r^2
	$
	and we 
	 take a limit as $t_2 \to \infty$.
	The boundary term in \eqref{9.5.0} vanishes
	because 
	$
		\hat g (j), j \in \bZ
	$
	has compact support.
	  By what was just said the last sum is bounded above by
	$$
		N \sum_{j = r^2}^{\infty}  j^{-3} \sum_{m = r^2}^j \cM_x |g|^2 (-r^2 - m, 0)
	     $$
	 Clearly, in the above sum 
	$
		[-r^2 - m, 0] \subset [-2j, 0].
	$
	Then,  
	we have
	  \begin{equation}
				\label{6.8}
	   \begin{aligned}
		|\delta_{e_k, 1} \cG g_4 (n, x)|^2&  \leq N  (\sum_{j = r^2}^{\infty}  j^{-2}) \cM_t \cM_x |g|^2 (0, 0)  	\\																	
	&\leq	 N r^{-2} \,  \cM_t \cM_x |g|^2 (0, 0).
	  \end{aligned}
	 \end{equation}
	
	Next, we estimate the first term on the right hand side of \eqref{6.7}.
	Fix any $(n, x) \in Q_r$ and note that by Minkowski inequality
	$$
		 |\cG g_4 (n+1, x) - \cG (n, x)|^2 
			\leq  \sum_{j = -\infty}^{n-1}  |\delta_{e_1, 1} (G (j, n+1, \cdot) - G (j, n, \cdot)) \ast_1 g (j, \cdot) (x)|^2.
	$$
	Since $G (j, n, \cdot)$ satisfies Eq. \eqref{3.4},
	 by Assumption \ref{assumption 3.2}
	 we have
	$$
		|\delta_{e_1, 1} (G (j, n+1, \cdot) - G (j, n, \cdot)) \ast_1 g (j, \cdot) (x)|
	$$
	 $$
		\leq \delta_2^{*} \sum_{k = 1}^{d_1} |\Delta_{l_k, 1} \delta_{e_1, 1} G (j, n, \cdot) \ast_1 g (j, \cdot) (x)|.
	 $$
	Observe that 
	$
		\Delta_{l_k, 1} = -\delta_{ - l_k, 1} \delta_{ l_k, 1}. 
	$
	Now we use Lemma \ref{lemma 5.4} with 
	$
		m  = 3, 
	$
	and
	$
	\eta_1 = - l_k, \eta_2 = l_k,  \eta_3  = e_1.
	$
	We get 
	$$
			 | \Delta_{l_k, 1} \delta_{e_1, 1} G (j, n, \cdot) \ast g (j, \cdot) (x)|^2 \leq N (n - j)^{-3} \,  \cM_x |g|^2 (j, 0).
	$$
	After that one repeats  the above argument  and obtains
	\begin{equation}
				\label{6.9}
		 |\cG g_4 (n+1, x) - \cG (n, x)|^2 	 \leq  N  r^{-2} \cM_t \cM_x |g|^2 (0, 0).
	\end{equation}
	By \eqref{6.7} - \eqref{6.9}  we get
	\begin{equation}
				\label{6.10}
		I_4 \leq N  \cM_t \cM_x |g|^2 (0, 0).
	\end{equation}
	
	Next,
	combining \eqref{6.3} - \eqref{6.6} with \eqref{6.10}, 
	we conclude that 
	 the estimate \eqref{6.2} holds with $t_0 = 0$, $x_0 = 0$,
	and, then \eqref{6.1} holds.
	
	Finally, we raise both sides of \eqref{6.1} to the power $p > 2$ and 
	sum with respect to 
	$
		(t, x) \in \bZ^{d+1}.
	$
	By Theorem \ref{theorem 12.2} and Corollary \ref{corollary 12.1} with $p/2 > 1$
	 we obtain
	$$
		\sum_{ (t, x) \in \bZ^{d+1} } |\cG g (t, x)|^p \leq N \sum_{ (t, x) \in \bZ^{d+1} } |\cM_t \cM_x |g|^2 (t, x)|^{p/2}
	$$
	 $$
		\leq N \sum_{ (t, x) \in \bZ^{d+1} } |g (t, x)|^p.
	 $$

	\textit{Proof of Theorem \ref{theorem 4.3}.}

	\textit{Case $p \geq 2$.}
	Fix any multi-index $\alpha$ of order $2$ and denote
	$$
		\cR g (t, x) = \sum_{j = -\infty}^{t-1}  \delta^{\alpha}_1 G (j, t, \cdot) \ast_1 g (j, \cdot) (x), \,  t\in \bZ, x  \in \bZ^d.
	$$ 
	By Lemma \ref{lemma 5.2} the theorem holds for $p = 2$, and hence, we may assume that $p > 2$.

	We follow the argument of Theorem \ref{theorem 3.3} very closely
	making only necessary changes.
	First, we  prove that, 
	for any 
	$
		(t, x) \in \bZ^{d+1}
	$
	$$
			(\cR g)^{\#} (t, x)  \leq N  \,   (\cM_t \cM_x |g|^2 (t, x))^{1/2}.
	$$ 
	From the proof of Theorem \ref{theorem 3.3}
	 we know that this implies the assertion of the theorem for $p > 2$.
	
	By Cauchy-Schwartz  inequality it suffices to show that
	for any 
	$
		(t, x) \in \bZ^{d+1},
	$
	and $r \geq 1$,
	\begin{equation}
				\label{6.11}
		\sum_{ (k,  y) \in Q_r (t, x) } |\cR g (k, y)  - (\cR  g)_{Q_r (t, x)}|^2 \leq N  |Q_r| \,  \cM_t \cM_x  |g|^2 (t, x).
	\end{equation}
	As in the proof of Theorem \ref{theorem 3.3} we may assume that $t = 0, x = 0$.
	
	Next, let
	$
		g_i, i = 1, \ldots, 4
	$ 
	be the functions defined  in the proof of Theorem \ref{theorem 3.3} 
	and denote 
	$$
		I_i = \sum_{(n, x) \in Q_r} |\cR g_i (n, x)|^2, \, i = 1, 2, 3,
	$$
	 $$
		I_4 = \sum_{ (n, x) \in Q_r } |\cR g_4 (n, x)  - \cR g_4 (0, 0)|^2.
	 $$
	Then, we have
	\begin{equation}
				\label{6.11.1}
		\sum_{ (n, x) \in Q_r} |\cR g (n, x)  - (\cR  g)_{Q_r}|^2 \leq \sum_{i=1}^4 I_i.
	\end{equation}
	Clearly, $I_3 = 0$ so that we only need to consider $I_1, I_2, I_4$.
	
	\textit{Estimate of $I_1$.}  
	By Lemma \ref{lemma 5.2}
	\begin{equation}
				\label{6.12}
		I_1 \leq N \sum_{(t, x) \in Q_{2r} }  |g (t, x)|^2 \leq N |Q_r|\,  \cM_t \cM_x |g|^2 (0, 0).
	\end{equation}
	This time one needs to use Lemma \ref{lemma 5.2} instead of Lemma \ref{lemma 5.1}.

	\textit{Estimate of $I_2$.}
	Fix any $(n, x) \in Q_r$
	and 
	$
		j \leq n - 1.
	$
	By Lemma \ref{lemma 5.3}
	$$
			|\delta^{\alpha}_1 G (j, n, \cdot) \ast_1 g (j, \cdot) (x)| \leq N r^{-2} \, \cM_x g (j, 0).
	$$
	Then, we get 
	$$
		|\cR g_2 (n, x)| \leq  N   r^{-2}  \sum_{j = -2r^2}^{2r^2}  \,  \cM_x g (j, 0)
				 \leq N \cM_t \cM_x g (0, 0).
	$$
	By the fact that  
	$
		|\cM_t \cM_x g|^2 \leq  \cM_t \cM_x |g|^2
	$
	\begin{equation}
				\label{6.13}
		I_2 \leq  |Q_r|  \cM_t \cM_x  |g|^2 (0, 0).
	\end{equation}

	\textit{Estimate of $I_4$}.
	Observe that the inequality \eqref{6.7} holds
	with $\cG$ replaced by $\cR$.
	We fix any $(n, x) \in Q_r$
	and note that
	$$
		\delta_{e_k} \cR g (n, x) = \sum_{j = -\infty}^{-2r^2} \delta_{e_k, 1} \delta^{\alpha}_1 G (j, n, \cdot) \ast_1 g (j, \cdot) (x).
	$$
	
	Next, for each 
	$
		j \leq -2r^2,
	$
	we have $ n - j \geq r^2$.
	 Then, by
	Lemma \ref{lemma 5.4} $(i)$ with
	$
		m = 3
	$
	 we get
	$$
				|\delta_{e_k, 1} \cR g (n, x)| \leq N \sum_{j = -\infty}^{-2r^2} (n - j)^{-3/2} \cM_x g (j, 0).
	$$
	Repeating the corresponding part of the  proof of Theorem \ref{theorem 3.3}, we obtain
	 \begin{align*}
		& |\delta_{e_k} \cR g (n, x)| 
	\leq N  \sum_{j = r^2}^{\infty}  j^{-5/2} \sum_{m = -2j}^0 \cM_x g (m, 0) \\
		&\leq  N  (\sum_{j = r^2}^{\infty}  j^{-3/2}) \cM_t \cM_x g (0, 0) \leq N r^{-1}  \cM_t \cM_x g (0, 0),
	\end{align*}
	which implies
	\begin{equation}
				\label{6.14}
		|\delta_{e_k} \cR g (n, x)|^2  \leq N r^{-2} \cM_t \cM_x |g|^2 (0, 0).
	\end{equation}

	Next, since $G (j, n, \cdot)$ satisfies Eq. \eqref{3.4},
	and  Assumption \ref{assumption 3.2} hold,
	 we have
	$$
		|\cR g (n+1, x) - \cR g (n, x)| \leq  
			\delta_2^{*}  \sum_{k = 1}^{d_1} \sum_{j = - \infty}^{-2r^2} |\Delta_{l_k, 1} \delta^{\alpha}_1 G (j, n, \cdot) \ast_1 g (j, \cdot) (x)|.
	$$
	By  Lemma \ref{lemma 5.4} with $m = 4$ 
	$$
		|\cR g (n+1, x) - \cR  g (n, x)| \leq  N  \sum_{j = - \infty}^{-2r^2} (n-j)^{-2} \cM_x g (j, 0).
	$$
	As in the previous paragraph, this yields
	\begin{equation}
				\label{6.15}
		|\cR g (n+1, x) - \cR g (n, x)|^2  \leq N r^{-2} \cM_t \cM_x |g|^2 (0, 0).
	\end{equation}
	Combining \eqref{6.7} with \eqref{6.14} and \eqref{6.15} 
	we get
	\begin{equation}
				\label{6.16}
		I_4 \leq N |Q_r| \cM_t \cM_x |g|^2 (0, 0).
	\end{equation}
	Now \eqref{6.11} follows from \eqref{6.11.1} -  \eqref{6.13}, \eqref{6.16} and the fact that $I_3 = 0$.

	\textit{Case $p \in (1, 2).$}
	For functions $f$ and $g$ on $\bZ^{d+1}$ with compact support
	we denote 
	$$
		B (f, g) = \sum_{ (t, x) \in \bZ^{d+1} } (\delta^{\alpha}_1 \cR g) (t, x) f (t, x).
	$$ 
	We will show that
	\begin{equation}
				\label{6.17}
		| B (f, g)| \leq N   || g  ||_{ l_p (\bZ^{d+1}) }
				||f  ||_{  l_{p'} (\bZ^{d+1}) },
	\end{equation}
	where $p' =p/(p-1)$.
	After that by the standard  approximation argument 
	(see the last paragraph of the proof of Theorem \ref{theorem 3.4})
	the above inequality holds with 
	$
		g \in l_p (\bZ^{d+1})
	$
	 and 
	$
		f \in l_{p'} (\bZ^{d+1}).
	$
	By duality this will imply the claim.

	Next, for a function $h$ on $\bZ^d$ we denote 
	$A h (x) = h (-x)$.
	We note that
	$$
		B (f, g) = \sum_{ t  \in \bZ }  \sum_{x \in \bZ^d}  \sum_{j = -\infty}^{n-1}  A \delta^{\alpha}_1  G (j, t, \cdot) \ast_1 f (t,  \cdot) (x) g (j, x)
	$$
	 $$
		 = \sum_{j \in \bZ} \sum_{x \in \bZ^d}   g (j, x)  \bar \cR f (j, x),
	 $$
	where
	$$
	  	 \bar \cR f (j, x) =  \sum_{t = j+1}^{\infty}   A \delta^{\alpha}_1  G (j, t, \cdot)   \ast_1  f (t,  \cdot) (x).
	$$
	By Lemma \ref{lemma 11.1} $(i)$ we have 
	$
		A G (j, t, \cdot) = G (j, t, \cdot).
	$
	and, then,
	$$
		\bar \cR f (j, x) = \sum_{t = j+1}^{\infty}    \delta^{\alpha}_1  G (j, t, \cdot)   \ast_1  f (t,  \cdot) (x).
	$$
	By inspecting the  argument of this theorem for the case $p \geq 2$, 
	we conclude that 
	$$
		  || \bar \cR f||_{ l_p (\bZ^{d+1}) } \leq N    || f ||_{  l_p (\bZ^{d+1}) } .
	$$
	Finally, by H\"older's inequality and what was just proved we obtain \eqref{6.17}.
	\mysection{Proof of Theorem \ref{theorem 3.2}}

				\label{section 7}
	\begin{proof}
	
	The proof is divided into six steps.
	In the first four steps our goal is to show that, if $s  = 0$,
	then, for any $m \in \bN$
	 the  following estimate holds:
	\begin{equation}
				\label{7.0.1}
	\begin{aligned}
		E \sum_{r = 0}^{m} & [ v_{\gamma, h} (t_r, \cdot)]^p_{2, p; \, h} \leq N E \sum_{r = 0}^{m-1} ( ||f (t_r, \cdot)||^p_{p; \, h} \\ 
	& + [g(t_r, \cdot)]^p_{1, p; \, h}  +   || v_{\gamma, h} (t_r, \cdot) ||^p_{1, p; \, h}).
	\end{aligned}
	\end{equation}
	In Step 1 and Step 2 we prove \eqref{7.0.1} in case of  coefficients  independent of $x$.
	In Steps 3 and 4 we prove \eqref{7.0.1} 
	in  case of variable coefficients.
	In Step 5 by using the bootstrap method and interpolation inequality for discrete Sobolev norms,
	 we prove that 
	$$
		E \sum_{r = 0}^{m}  [ v_{\gamma, h} (t_r, \cdot)]^p_{n + 2, p; \, h} 
		\leq N \gamma \fR_n +  E \sum_{r = 0}^{m-1}   || v_{\gamma, h} (t_r, \cdot) ||^p_{p; \, h}.
	$$  
	Now \eqref{3.5} follows from the following estimate:
	\begin{equation}
				\label{7.0.2}
		E \sum_{r = 0}^{T/\gamma} || v_{\gamma, h} (t_r, \cdot)||^p_{p; \, h} \leq \fR_0,
	\end{equation}
	which is proved in Step 5 by using Lemma \ref{lemma 9.4}.
	In  Step 6 we prove \eqref{3.5.1}.
	
	For the sake of convenience, we assume $d_0 = 1$
	and denote 
	$
		g := g^1, \nu : = \nu^1.
	$

	 \textit{Step 1.
	Case 
	$n = 0$, 
	$b \equiv 0$,
	 $\nu  \equiv 0$,  
	and $a$  independent of $t$.
	}
	 Since $a$  is independent of $t$,
	 by Assumption \ref{assumption 3.2}  $(iv)$
	the same holds for
	 $
		\lambda_{k}, k  =1, \ldots, d_1.
	$
	Hence, $v_{\gamma, h}$ solves the equation
	\begin{equation}
				\label{7.4} 
	\begin{aligned}
		z (t+\gamma, x) &- z (t, x) =\gamma  [\lambda_{k} (t) \Delta_{l_k, h} z (t, x) 
		+ f (t, x)] \\
	&+  g (t, x)  (w (t+\gamma) - w (t)), \, z (0, x) = 0, \, \,	 t \in \gamma \bZ_{+}, h \in h\bZ^d.	
	 \end{aligned}
	\end{equation}	

	Here is the outline of the argument.
	We split $v_{\gamma, h}$
	into two parts $v^{(1)}_{\gamma, h}$
	and $v^{(2)}_{\gamma, h}$.
	The first part is a solution of the stochastic
	finite difference equations,
	and the second one is a solution 
	 of a finite difference equation without stochastic part.
	Then we fix $\omega$ and use  
	 Theorem \ref{theorem 3.4}  and  Theorem \ref{theorem 4.4} respectively.
	 After that we take expectations and obtain the desired estimate.
	Note that, however, Theorem \ref{theorem 3.4} requires the function $G$ 
	given by \eqref{3.4}
	 to be independent of $\omega$.
	This holds if the leading coefficients of the stochastic finite difference equations are independent of $\omega$.
	This is why we choose
	$v^{(1)}_{\gamma, h}$ 
	and
	 $v^{(2)}_{\gamma, h}$ 
	to be the solutions
	   of the following equations:
	 \begin{align*}
		& v^{(1)}_{\gamma, h} (t+\gamma, x)  -  v^{(1)}_{\gamma, h} (t, x) = \gamma \Delta_{e_i, h}  v^{(1)}_{\gamma, h} (t, x)\\
																			&		+    g (t, x)  (w (t+\gamma) - w (t)), \, v_{1, h} (0, x) = 0,	
	  \end{align*}
		\begin{align*}
		& v^{(2)}_{\gamma, h} (t+\gamma, x) - v^{(2)}_{\gamma, h} (t, x)  = \gamma  \lambda_{k} (t) \Delta_{l_k, h} v^{(2)}_{\gamma, h} (t, x)  \\
		 &+ \gamma [ f (t, x) +  \lambda_{k} (t) \Delta_{l_k, h} v^{(1)}_{\gamma,  h} -  \Delta_{e_i, h} v^{(1)}_{\gamma, h} (t, x)], \, v^{(2)}_{\gamma, h} (0, x) = 0,
		\, t \in \gamma \bZ_{+}, x \in h \bZ^d.		
	        \end{align*}	
	Note that
	$$
	 v_{\gamma, h} = v^{(1)}_{\gamma, h} + v^{(2)}_{\gamma, h}.
	$$
	Further, observe that  Assumption \ref{assumption 3.1}
	is satisfied with
	$
		a^{i j} \equiv \delta_{i j}
	$	
	and
	Assumption \ref{assumption 3.2}
	is satisfied with
	$d_1 = d$,  
	 $l_k = e_k$,  
	   $\lambda_k = 1$, 
	    $k = 1, \ldots, d$.
	Then, by  Theorem  \ref{theorem 3.4} $(ii)$ with $n = 1$,
	we have
	\begin{equation}
				\label{7.1}
		E \sum_{r = 0}^m [v^{(1)}_{\gamma, h} (t_r, \cdot) ]^p_{2, p, h} \leq N  E \sum_{r = 0}^{m-1} [  g (t_r, \cdot)]^p_{1, p, h}.
	\end{equation}
	By Theorem  \ref{theorem 4.4}, 
	\begin{equation}
				\label{7.2}
	 \begin{aligned}
		E \sum_{r = 0}^m  [v^{(2)}_{\gamma, h} (t_r, \cdot) ]^p_{2, p; \,  h} & \leq  
																		N  E \sum_{r = 0}^{m-1} ( || f (t_r, \cdot)||^p_{p; \,  h} \\
	& [ v^{(1)}_{\gamma, h} (t_r, \cdot) ]^p_{2, p; \,  h} + ||\Delta_{l_k, h} v^{(1)}_{\gamma, h} (t_r, \cdot)||^p_{p; \, h}).
	 \end{aligned}
	\end{equation}
	We need to replace the finite differences with respect to $l_k$
	 by the ones with respect to 
	$
		e_i, i  =1, \ldots, d
	$
	 only.
	To this end observe that, 
	for any function $\kappa$ 
	on $h \bZ^d$, and 
	any
	$
		\xi \in \bZ^d,
	$ 
		\begin{equation}
					\label{7.16}
								\delta_{\xi, h} \kappa (x) = \kappa (x + \xi h) - \kappa (x) =  \sum_{ i =1}^d  \sum_{ y \in \bZ^d: ||y|| \leq ||\xi|| } c_i (y)  \delta_{e_i, h} \kappa (x + h y),    
		\end{equation}
		where $c_i (y) \in \{-1, 0, 1\}$.
		Since 
		\begin{equation}
					\label{7.25}
			\Delta_{\xi, h} \kappa = -\delta_{\xi, h} \delta_{-\xi, h} \kappa,
		 \end{equation}
		we have 
		$$
			\Delta_{\xi, h} \kappa (x)  = \sum_{i, j = 1}^d  \sum_{||y|| \leq 2||\xi||} c_{i j} (y) \delta_{e_i, h} \delta_{e_j, h} \kappa (x + h y),
		$$
		where 
		$
			c_{i j} (y) \in \{-1, 0, 1\}.
		$
		Then,
		 \begin{equation}
																					\label{7.17}
							||\Delta_{\xi, h} \kappa||_{p, h} \leq N (d, p, \xi) [ \kappa ]_{2, p, h}.
		 \end{equation}
	By what was just said
	 combined with \eqref{7.1} and \eqref{7.2} we obtain
	\begin{equation}
				\label{7.3}
		\begin{aligned}
		E \sum_{r = 0}^m    [v_{\gamma, h} (t_r, \cdot) ]^p_{2, p; \,  h} 
																\leq N  E \sum_{r = 0}^{m-1} ( || f (t_r, \cdot)||^p_{p; \,  h} 
		+ [ g (t_r, \cdot)]^p_{1, p; \, h}).
		\end{aligned}
	\end{equation}

	 \textit{Step 2. 
	Case $ n =0$, 
	and
	$a$ and $\nu$  independent of $x$ and  $b \equiv 0$}.
	Denote
	$$
		\tilde g (t, x) = g (t, x)  +  \nu (t) v_{\gamma, h} (t, x).
	$$ 
	Observe that that 
	$
		v_{\gamma, h}
	$ 
	solves Eq. \eqref{7.4} with
	  $g$ replaced by  $\tilde g$.
	Then, by \eqref{7.3}  and Assumption \ref{assumption 3.4.1} we obtain
	\eqref{7.0.1}.
	
	\textit{Step 3.
	Case $n = 0$, 
	$b \equiv 0$, 
	and
	$a$ and $\nu$ depend on $\omega, t, x$.} 
	In this case we use the method of freezing the coefficients.
	Fix some $\varepsilon > 0$, which we will choose later,
	and let 
	$
		\kappa_{\varepsilon}
	$
	be the number from Assumption \ref{assumption 3.3}.
	Let 
	$
	  \zeta \in C^{\infty}_0 (\bR^d)
	$
	be a function  such that
	it is supported
	on the ball
	$
		\{y \in \bR^d: |y| \leq \kappa_{\varepsilon}\},
	$
	and
	$
		\int_{\bR^d} |\zeta (y)|^p \, dy = 1.
	$
	First, we write a version of the product rule for finite differences,
	which we will use in the sequel. 
	For any vector 
	$\xi \in \bZ^d$, and any functions
	 $u$ and  $v$ on $h \bZ^d$,
	\begin{equation}
				\label{7.9}
		\delta_{\xi, h} (v u) = (T_{\xi, h} v) \delta_{\xi, h} u + (\delta_{\xi, h} v) u.
	\end{equation}
	Then, for any 
	$
		\xi, \eta \in \bZ^d,
	$ 
	 $
		t \in \gamma \bZ_{+},
	 $
	 $
		x \in h\bZ^d,
	$ 
	and $y \in \bR^d$, 
	we have
	\begin{equation}
				\label{7.11}
	  \begin{aligned} 
		\zeta (x - y)  & \delta_{\eta, h} \delta_{\xi, h} v_{\gamma, h} (t, x) = \delta_{\eta, h} \delta_{\xi, h} (\zeta (x-y) v_{\gamma, h} (t, x))\\
		&- (T_{\eta, h} T_{\xi, h} v_{\gamma, h} (t, x)) (\delta_{\eta, h} \delta_{\xi, h} \zeta ( x - y))\\
		& - (T_{\xi, h} \delta_{\eta, h} v_{\gamma, h} (t, x)) (\delta_{\eta, h} \zeta (x - y))\\
		& - (T_{\eta, h} \delta_{\xi, h} v_{\gamma, h} (t, x)) (\delta_{\xi, h} \zeta (x-y)).
	  \end{aligned}
	 \end{equation}
	We plug 
	$
		\xi = e_j, 
	   \nu = e_i,
 	$
	and
	raise the above identity to the $p$-th power, 
	sum all the terms with respect to 
	$x \in h \bZ^d$,
	and  integrate with respect to 
	$y \in \bR^d$.
	Then, by the mean value theorem
	we get
	\begin{equation}
				\label{7.10}
	 \begin{aligned}
	&	|| \delta_{e_j, h} \delta_{e_i, h} v_{\gamma, h} (t, \cdot)||^p_{p; \,  h} \\
		&\leq  N \int ||\delta_{e_j, h} \delta_{e_i, h} (v_{\gamma, h} (t, \cdot) \zeta (\cdot - y))||^p_{p; \, h} \, dy + N || v_{\gamma, h} (t, \cdot)||^p_{1, p; \,  h}.
	\end{aligned}
	\end{equation}
	
	To estimate the first term on the right hand side of \eqref{7.10}
	we introduce a function
	$
	     z_{\gamma, h, y} (t, x)= v_{\gamma, h} (t, x) \zeta (x - y), t \in \gamma \bZ_{+}, x \in h \bZ^d.
	$
	By plugging 
	$
		\eta = -l_k, \xi = l_k
	$
	 in \eqref{7.11},
	and \eqref{7.25} and the fact that
	$$
		 T_{\xi, h} \delta_{-\xi, h} = - \delta_{\xi, h},
	$$
	 we obtain the following identity:
	$$
		\zeta (x - y) \Delta_{l_k, h} v_{\gamma, h} (t, x) = \Delta_{l_k, h} (v_{\gamma, h} (t, x) \zeta (x-y))
	$$
	 $$
		-  v_{\gamma, h} (t, x) (\Delta_{l_k, h} \zeta ( x - y))
	 $$
	  $$
		  - (\delta_{l_k, h} v_{\gamma, h} (t, x)) (\delta_{-l_k, h} \zeta (x - y))\\
		 - (\delta_{-l_k, h}  v_{\gamma, h} (t, x)) (\delta_{l_k, h} \zeta (x-y)).
	   $$
	By using this identity
	we get that
	$
		z_{\gamma, h, y}
	$ 
	solves the following difference equation with  coefficients 
	constant in the temporal variable:
	\begin{equation}
				\label{7.12}
	  \begin{aligned}
		& u (t+\gamma, x) - u (t, x) = \gamma [ \lambda_k (t, y) \Delta_{l_k, h} u (t, x)  + \tilde f (t, x)] \\
																				&	+ [\nu (t, y) u (t, x) + \tilde g (t, x)] (w (t+\gamma) -  w (t)), \quad u (0, x) = 0,
	  \end{aligned}
	 \end{equation}
	 where 
	 $$
		\tilde f (t, x) = f (t, x) +  (\lambda_k (t, x) - \lambda_k (t, y)) \Delta_{l_k} z_{\gamma, h, y} (t, x) 
	$$
	 $$
		-   \lambda_k (t, x) v_{\gamma, h} (t, x) \Delta_{l_k, h} \zeta (x) 
	 $$
	 $$
		- \lambda_k (t, x) (\delta_{ - l_k, h}  v_{\gamma, h} (t, x)) (\delta_{l_k, h} \zeta (x - y))
	$$
	 $$
		 - \lambda_k (t, x) (\delta_{ l_k, h} v_{\gamma, h} (t, x)) (\delta_{ - l_k, h } \zeta (x - y)),
	 $$
	  $$
		\tilde g (t, x) = g (t, x) +  [\nu (t, x) - \nu (t, y)] z_{\gamma, h, y} (t, x).
	$$
	By what was just said we  may apply our results that we obtained in Step 2.
	 By \eqref{7.0.1} we get
	 \begin{equation}
				\label{7.12.1}
		E \sum_{r = 0}^{m}  [ v_{\gamma, h} (t_r, \cdot) \xi (\cdot - y) ]^p_{ 2, p, h} \leq 
																				 N \sum_{i = 1}^4 E \sum_{r = 0}^{m-1}  I_{i} (t_r) + \gamma \cR_0,
	 \end{equation}
	where, for $t \in \gamma \bZ$, 
		 $$
			I_{1} (t) =  ||(\lambda_k (t, \cdot) - \lambda_k (t, y)) \Delta_{l_k, h} (v_{\gamma, h} (t, \cdot) \xi (x - y)) ||^p_{ p; \, h},
		  $$
		   $$
			 I_{2} (t) =  || \lambda_k (t, \cdot) v_{\gamma, h} (t, \cdot) \Delta_{l_k, h} \xi  (\cdot - y) ||^p_{ p; \, h},
		    $$
		     $$
			  I_{3} (t) =  ||\lambda_k (t, \cdot) (\delta_{ - l_k, h} v_{\gamma, h} (t, \cdot)) (\delta_{l_k, h} \xi (\cdot - y)) ||^p_{p; \, h}
		     $$
		      $$	
					+ ||\lambda_k (t, \cdot) (\delta_{l_k, h } v_{\gamma, h} (t, \cdot)) (\delta_{- l_k, h} \xi (\cdot - y))||^p_{ p;  \, h},
		      $$
		 	$$
				I_{4} (t) =    || (\nu (t, \cdot) - \nu (t, y))   v_{\gamma, h} (t, \cdot) \xi (\cdot - y)||^p_{1, p;  \,  h}.
			$$
		In the sequel $\tilde \zeta$ is a $C^{\infty}_0 (\bR^d)$ function supported on the ball of radius $\kappa_{\varepsilon}$ 
		which might change from inequality from inequality.
		 By  the fact that $\xi$ is supported on the ball of radius $\kappa_{\varepsilon}$ and Assumption   \ref{assumption 3.3} 
		we get
		\begin{equation}
					\label{7.13}
		\begin{aligned}
			I_{1} (t)   &\leq \varepsilon^p || \lambda_k (t, \cdot) \Delta_{l_k, h} (v_{\gamma, h} (t, \cdot) \zeta (\cdot - y)) ||^p_{ p; \,  h} \\
			&\leq \varepsilon^p [  v_{\gamma, h} (t, \cdot) \zeta (\cdot - y) ]^p_{ 2, p; \,  h},
		\end{aligned}
		\end{equation}
		where the last inequality is due to \eqref{7.17}.
		Similarly, by \eqref{7.9}, Assumption \ref{assumption 3.4.1}  and the mean value theorem one has
		\begin{equation}
					\label{7.14}
		\begin{aligned}
			I_{4} (t) &\leq  N  \varepsilon^p   (||v_{\gamma, h} (t, \cdot)  \tilde \zeta (\cdot - y) ||^p_{p; \, h} \\
			&
			 +    \sum_{j = 1}^d ||    (\delta_{e_j, h} v_{\gamma, h} (t, \cdot))  \zeta (\cdot - y)  ||^p_{p; \, h}).
		\end{aligned}
		\end{equation}
		Next, by Assumption \ref{assumption 3.2} and the mean value theorem
		\begin{equation}
					\label{7.15}
		  \begin{aligned}
			I_{2} (t) & \leq   N  ||    v_h (t, \cdot)  \tilde \zeta (\cdot - y) ||^p_{ p, h}  \\
										&                                   I_{3} \leq ||   (\delta_{l_k, h} v_{\gamma, h} (t, \cdot))  \tilde \zeta (\cdot - y) ||^p_{p, h}  
														+ ||  (\delta_{-l_k, h} v_{\gamma, h} (t, \cdot))  \tilde \zeta (\cdot - y) ||^p_{p, h}.
		  \end{aligned}
		\end{equation}
		Further, by 	\eqref{7.16} 
		\begin{equation}
					\label{7.18}
			I_{3} (t)  \leq N \sum_{j = 1}^d \sum_{ \xi \in \bZ^d: ||\xi|| \leq L} || (\delta_{e_j, h} v_{\gamma, h} (t, \cdot + h\xi)) \tilde \zeta (\cdot - y)   ||^p_{p, h},
		\end{equation}
		where 
		$
			L = \max_{k = 1, \ldots, d_1} ||l_k||.
		$

		Combining \eqref{7.12.1} with \eqref{7.13} - \eqref{7.18} 
		 and integrating both sides of the inequality with respect to $y \in \bR^d$,
		 we obtain 
		   \begin{align*}
			E \sum_{r  = 0}^{m} & \int  [  v_{\gamma, h} (t_r, \cdot) \xi (\cdot - y) ]_{ 2, p; \, h} \, dy   
			\leq N \cR_0  + N E \sum_{r = 0}^{m-1}  || v_{\gamma, h} (t_r, \cdot) ||^p_{1, p; \,  h}
					\\ 
			&  + (N_1 \varepsilon)^p   E \sum_{r = 0}^{m-1}    \int [ v_{\gamma, h} (t_r, \cdot) \xi (\cdot - y) ]^p_{ 2,  p; \,  h} \, dy. \\
		\end{align*}
		To get rid of the last term on the right hand side of the above inequality we choose
		$
		   \varepsilon  < (2 N_1)^{-1}.
		$ 
		By this and \eqref{7.10}
		\begin{equation}
					\label{7.20}
								  E \sum_{r  = 0}^{m}  [ v_{\gamma, h} (t_r, \cdot) ]^p_{2, p, h} \leq  
																							N \cR_0 +  N E \sum_{  r  =  0  }^{m-1}    || v_{\gamma, h} (t_r, \cdot) ||^p_{1, p; \, h}.
		\end{equation}

		\textit{Step 4.
		Case $n = 0$, 
		and
		$a, b, \nu$ 
		depend on $t, x, \omega$}.
		The function $v_{\gamma, h}$
		 solves Eq. \eqref{3.1} with
		$b \equiv 0$, 
		and
		$f (t, x)$
		 replaced by 
		$$
			\tilde f (t, x) = f (t, x) + b^i (t, x) \delta_{e_i, h} v_h (t, x).
		$$
		Then, by what we proved in the previous step
		$$
			E \sum_{r = 0}^{m} [v_{\gamma, h} (t_r, \cdot)]^p_{2, p; \, h} \leq N \cR_0 
			+ E \sum_{r = 0}^{m-1} || (b^i  \delta_{e_i, h} v_{\gamma, h}) (t_r, \cdot) ||^p_{p; \, h}.
		$$
		Now \eqref{7.0.1} follows from Assumption \ref{assumption 3.4.1}.

		\textit{ Step 5. Case $n > 0$.}
		Fix any multi-index $\alpha$ such that
		$
			||\alpha|| \leq n
		$
		and apply the operator 
		$
		   \delta^{\alpha}_h
		$
		to
		 Eq. \eqref{3.1}.
		Then, by \eqref{7.9}
		$
		  \delta^{\alpha}_h v_{\gamma, h}
		$ 
		satisfies  
		Eq. \eqref{3.1} with 
		$ f (t, x)$ 
		replaced by $\tilde f (t, x)$ 
		and $g (t, x)$ replaced by $\tilde g (t, x)$, where
		$$
			\tilde  f (t, x) =  \delta^{\alpha}_h f (t, x) + \sum_{\alpha' : \alpha' \neq 0, \alpha' \leq \alpha} N (\alpha', \alpha)  (T^{\alpha'}_h \delta^{\alpha'}_h \lambda_k (t, x)) (\delta^{\alpha - \alpha'}_h \Delta_{l_k, h} v_{\gamma, h} (t, x))
		$$
		 $$
			+ \sum_{\alpha' : \alpha' \neq 0, \alpha' \leq \alpha} N (\alpha', \alpha) (T^{\alpha'}_h \delta^{\alpha'}_h b^i (t, x)) (\delta^{\alpha - \alpha'}_h  \delta_{e_i, h} v_{\gamma, h} (t, x)),
		 $$ 
		   $$
			\tilde g (t, x) = \delta^{\alpha}_h g (t, x) + \sum_{\alpha' : \alpha' \neq 0, \alpha' \leq \alpha} N (\alpha', \alpha) (T^{\alpha'}_h \delta^{\alpha'}_h \nu^i (t, x)) (\delta^{\alpha - \alpha'}_h  v_{\gamma, h} (t, x)).
		   $$		  
		Then, the results of the previous case are applicable, and by \eqref{7.0.1} we get
		$$
			E \sum_{r = 0}^{m} [ \delta^{\alpha}_h v_{\gamma, h} (t_r, \cdot) ]^p_{2, p; \,  h} 
				\leq N   E  \sum_{r = 0}^{m-1} (|| \tilde  f (t_r, \cdot) ||^p_{p; \,  h} 
		$$  
	   	 $$
		 	+ || \tilde g (t_r, \cdot)||^p_{1, p; \,  h} + ||v_{\gamma, h} (t_r, \cdot)||^p_{|\alpha|+1, p;\, h}).
		 $$
		By Assumption \ref{assumption 3.4.1} and \eqref{7.17}, 
		for any 
		$
			t \in \gamma \bZ_{+},
		$
		 we get
		$$
				|| \tilde  f (t, \cdot) ||_{p; \,  h} \leq N  (|| f (t, \cdot)||_{ |\alpha|, p; \, h} + || v_{\gamma, h} (t, \cdot) ||_{ |\alpha| + 1, p; \, h}).
		$$
		Using \eqref{7.9},
		 we obtain
		$$
			|| \tilde g (t, \cdot)||^p_{1, p; \,  h} \leq N || g (t, \cdot) ||^p_{ |\alpha| + 1, p; \, h} +  || v_{\gamma, h} (t, \cdot) ||_{|\alpha|, p; \, h}.
		$$
		Then, by the above,
		 for any 
		$
			n_1 \in \{0, \ldots, n\},
		$ 
		there exists a constant $N_{s_1}$ such that
		\begin{equation}
					\label{7.21}
			E  \sum_{n = 0}^{m}  [  v_{\gamma, h} (t_n, \cdot) ]^p_{n_1 + 2, p; \, h}
																			 \leq N_{s_1}  (\cR_{n_1} + E  \sum_{n = 0}^{m-1} || v_{\gamma, h} (t_n, \cdot) ||^p_{n_1 + 1, p; \,  h}).
		\end{equation}

		To simplify the right hand side of the above inequality 
	we use an interpolation inequality which we describe below 
	(see Theorem 4.0.4 in \cite{D_12} or Propositions 1 and 4 in \cite{HY_19}).
	 For any 
	function
	$
	  \kappa \in l_p (h \bZ^d),
	$ 
	and any $\varepsilon > 0$, 
	one has 
	\begin{equation}
				\label{7.8}
	 	[ \kappa ]_{1, p; \,  h} \leq
									  N_0 (p, d) (\varepsilon [ \kappa]_{2, p; \,  h} + \varepsilon^{-1} ||\kappa||_{p; \,  h}).  	
	\end{equation}

	 Next, by applying
	  inequality \eqref{7.8} repeatedly and choosing $\varepsilon$ small enough, we get	
		\begin{equation}
					\label{7.21.1}
			E  \sum_{r = 0}^{m}  [  v_{\gamma, h} (t_r, \cdot) ]^p_{n_1 + 2, p; \, h}
																			 \leq \tilde N_{n_1}  (\cR_{n_1} +  E \sum_{r = 0}^{m-1} || v_{\gamma, h} (t_r, \cdot) ||^p_{ p; \,  h}).
		\end{equation}

		Next, we estimate the $l_p$ norm of $v_{\gamma, h}$.
		By  Lemma \ref{lemma 9.4}, for any 
		$
			m \in \{1, \ldots, T/\gamma\},
		$
		\begin{equation}
					\label{7.22}
			E   ||v_{\gamma, h} (t_m , \cdot)||^p_{p; \, h} 
												 \leq N \gamma  (\cR_0 +   \sum_{k = 1}^3 E   \sum_{r = 0}^{m-1}    J_k (t_r)),
		 \end{equation}
		where
		$$
			J_1 (t) = || (\lambda_{k} \Delta_{l_k, h} v_{\gamma, h}) (t, \cdot)||^p_{p; \, h},
		$$
	 	$$
			J_2 (t) = || (b_i \delta_{e_i, h}) (t, \cdot)||^p_{p; \, h}
		$$
		 $$
			J_3 (t) = || (\nu  v_{\gamma, h}) (t, \cdot)||^p_{1, p; \, h}.
		 $$
		By Assumption \ref{assumption 3.4.1} and \eqref{7.17}
		\begin{equation}
					\label{7.23}
			J_1 (t) \leq N [ v_{\gamma, h} (t, \cdot) ]^p_{2, p; \, h},
		\end{equation}
		\begin{equation}
					\label{7.23.1}
			J_2 \leq  ||v_{\gamma, h} (t, \cdot)||^p_{1, p; \, h}.
		\end{equation}
		Next, by \eqref{7.9} and Assumption \ref{assumption 3.4.1}
		$$
			J_3 (t) \leq N || v_{\gamma, h} (t, \cdot) ||^p_{1, p; \, h}.
		$$
		Combining this with \eqref{7.8}, we get
		\begin{equation}
					\label{7.24}
			J_3 (t) \leq N ([ v_{\gamma, h} (t, \cdot) ]^p_{2, p; \, h} + || v_{\gamma, h} (t, \cdot)||^p_{p; \, h}).
		\end{equation}
		Combining \eqref{7.22} - \eqref{7.24} with \eqref{7.21.1} (with $n_1 = 0$), 
		 we get
		$$
			E   ||v_{\gamma, h} (t_m, \cdot)||^p_{p; \, h}  \leq
		 N \gamma \mathcal{R}_0  + N_1 \gamma  E \sum_{r = 0}^{m - 1} ||v_{\gamma, h} (t_r, \cdot)||^p_{ p; \, h},
		$$
		where $N$ is independent of $m$.
		Then, by Lemma \ref{lemma 9.1} and the fact that $m \leq T/\gamma$ 
		$$
			E  \sum_{r = 0}^{T/\gamma} ||v_{\gamma, h} (t_r, \cdot)||^p_{p; \, h}  \leq N  \cR_0.
		$$
		 This combined with \eqref{7.21.1} implies  \eqref{3.5}.

		\textit{Step 6.} 
		Finally, we prove the inequality \eqref{3.5.1}.
		By Lemma \ref{lemma 9.4} 
		$$
			E \max_{r = 1, \ldots, T/\gamma} || v_{\gamma, h} (t_r, \cdot) ||^p_{ n + 1, p; \, h}
		 \leq N  h^2 \cR_n +  N h^2  \sum_{r = 0}^{(T/\gamma) - 1} E (|| (\lambda_k  \Delta_{l_k, h} v_{\gamma, h}) (t_r, \cdot) ||^p_{n, p; \, h} 
		$$
		 $$
			+ || (b^i \delta_{e_i, h} v_{\gamma, h}) (t_r, \cdot) ||^p_{n, p; \, h} +  || (\nu^l  v_{\gamma, h}) (t_r, \cdot) ||^p_{n + 1, p; \, h}).
	   	$$
		To estimate each term we use the argument similar to the one above. 
		By Lemma \ref{lemma 9.2}  combined with Assumption \ref{assumption 3.4.1}  we get
		$$
			E \max_{r = 1, \ldots, T/\gamma} || v_{\gamma, h} (t_r, \cdot) ||^p_{ n + 1, p; \, h}
			 \leq  N h^2  \cR_n +   N h^2 \sum_{r = 0}^{(T/\gamma) - 1} E (|| v_{\gamma, h} (t_r, \cdot) ||^p_{ n + 1, p; \, h} 
		$$
	  	 $$
			+ ||\Delta_{l_k, h} v_{\gamma, h} (t_r, \cdot) ||^p_{n, p; \, h}).
		 $$
		By \eqref{7.17} we may replace the term involving $\Delta_{l_k, h}$  by 
		 $
			[ v_{\gamma, h} ]^p_{n + 2, p; h}.
		$
		Now  \eqref{3.5.1} follows from   \eqref{3.5}.

		\textbf{Proof of Theorem \ref{theorem 4.1}}
		 Note that $u_{\gamma, h}$ is the solution of \eqref{3.1} with 
		$
		 	 g^l \equiv 0, l \geq 1.
		$ 
		Then, one repeats the argument of the proof of Theorem \ref{theorem 3.2} almost word for word. 
		However, in Step 1 we use Theorem \ref{theorem 4.4} with $p > 1$,
		 and that is why \eqref{4.1.0.0} holds with $p > 1$.
		In Step 6 we use 
		 Lemma \ref{lemma 9.4} $(ii)$ where $p$ is required to be greater that $2$,
		and, hence the estimate \eqref{4.1.0} is valid only for $p \geq 2$. 
	\end{proof}

	\mysection{Proof of Theorem \ref{theorem 4.2}}
										\label{section 10}

	The proof is standard. We construct a finite difference equation
	that is satisfied by the error of the approximation.
	After that both claims follow
	 from the stability estimates of Theorem \ref{theorem 4.1}.

	First, since $u \in \fH^{s+4}_p (0, T)$, 
	 by Remark \ref{remark 2.2}
	 $u$ is a four times differentiable function,
	and, hence, it is  a classical solution of \eqref{1.1}.
	Then, due to Assumption \ref{assumption 3.2}, for any $t, x$, 
	\begin{equation}
				\label{10.0}
		a^{i j} (t, x) D_{i j} u (t, x) = \lambda_k (t, x) D_{(\lambda_k) (\lambda_k)} u (t, x).
	\end{equation}

	Next, 
	denote 
	$
		e_{\gamma, h} (t, x) =   u (t, x) - u_{\gamma, h} (t, x), \, t \in \gamma \bZ_{+}, x \in h\bZ^d.
	$
	By \eqref{10.0}
	$e_{\gamma, h}$ is the solution of the following difference equation:
	\begin{equation}
					\label{10.1}
	 \begin{aligned}
		& z (t+\gamma, x) - z (t, x)  = \gamma \lambda_k (t, x) \Delta_{l_k, h} z (t, x) + \gamma b^i (t, x) \delta_{e_i, h} z (t, x) \\
														&   + \gamma  \sum_{q = 1}^3 f_q (t+\gamma, x), \, \,  z (0, x) = 0,  \, t \in \bZ_{+}, x \in h \bZ^d,
	 \end{aligned}
	\end{equation}
	where
	$$
		f_1 (t+\gamma, x) = \gamma^{-1} \int_t^{t+\gamma} (f (r, x) - f(t, x)) \, dr,
	$$
	 $$
		f_2 (t+\gamma, x) = \gamma^{-1} \int_t^{t+\gamma} (b^i (r, x)  D_i u (r, x) - b^i (t, x) \delta_{e_i, h} u_{\gamma, h} (t, x))\, dr,
	 $$
	  $$
		f_3 (t+\gamma, x) = \gamma^{-1} \int_t^{t+\gamma} (a^{i j} (r, x) D_{i j} (r, x)  - \lambda_k (t, x) \Delta_{l_k, h} (t, x)) \, dr.
	  $$

	Then, by Theorem \ref{theorem 4.1}
	\begin{equation}
				\label{10.2}
	  	\gamma \sum_{m = 0}^{T/\gamma} || e_{\gamma, h} (t_m, \cdot)||^p_{ n + 2, p; \, h} 
		\leq N \gamma \sum_{m = 0}^{(T/\gamma)-1} (F_{1} (t_m) + F_{2} (t_m) + F_{3} (t_m)),
	\end{equation}
	where
	$$
		F_{q} (t) = || f_q (t, \cdot) ||^p_{ n, p; \, h}, \,  t \in \gamma \bZ_{+},  q = 1, 2, 3.
	$$
	Fix any 
	$
		t \in  \gamma\bZ_{+} \cap [0, T].
	$ 
	By  Minkowski and H\"older's inequalities
	\begin{equation}
				\label{10.3}
		F_{2} (t) \leq N (F_{2, 1} (t) + F_{2, 2} (t) + F_{2, 3} (t)),
	\end{equation}
	where
	$$
		F_{2, 1}  (t) = \gamma^{-1} \int_t^{t+\gamma} ||(b^i (r, \cdot) - b^i (t, \cdot))  D_i u (r, \cdot)||^p_{n, p; \, h} \, dr,
	$$
	 $$
		F_{2, 2} (t) = \gamma^{-1} \int_t^{t+\gamma} || b^i (t, \cdot) (D_i u (r, \cdot) - D_i u (t, \cdot)) ||^p_{n, p; \, h} \, dr,
	 $$	
	  $$
		F_{2, 3} (t) =  || b^i (t, \cdot) (D_i u (t, \cdot) - \delta_{e_i, h} u (t, \cdot))||^p_{n, p, \, h}.
	  $$
	Similarly, by \eqref{10.0}
	\begin{equation}
				\label{10.4}
		F_{3} (t) \leq N (F_{3, 1} (t) + F_{3, 2} (t) + F_{3, 3} (t)),
	\end{equation}
	where
	$$
		F_{3, 1} (t) = \gamma^{-1} \int_t^{t+\gamma} ||(a^{i j} (r, \cdot)  - a^{i j} (t, \cdot)) D_{i j} u (r, \cdot) ||^p_{n, p; \, h} \, dr,
	$$
	 $$
		F_{3, 2} (t) = \gamma^{-1}   \int_t^{t+\gamma} || a^{i j} (t, \cdot)  (D_{i j} u (r, \cdot)  - D_{i j} u (t, \cdot))||^p_{n, p; \, h}  \, dr,
	 $$	
	  $$
		F_{3, 3} (t) =  || \lambda_k (t, \cdot) (D_{ (\lambda_k) (\lambda_k) } u (t, \cdot) - \Delta_{l_k, h} u (t, \cdot))   ||^p_{n, p; \, h}.
	  $$
	
	Our goal is to show that, for any $t \in [0, T-\gamma]$, the following estimates hold: 
	\begin{equation}
				\label{10.5}
		 F_1 (t)  \leq   N  \gamma^{p-1 }  \int_t^{t+\gamma} ||D_t f  (r) ||^p_{ H^{s_1}_p } \, dr,
	\end{equation}
	 \begin{equation}
				\label{10.6}
		F_{2, 1} (t) + F_{3, 1} (t) \leq   N \gamma^{p - 1   }   \int_t^{t+\gamma}  ||  u (r, \cdot) ||^p_{ H^{s + 2  }_p  } \, dr,
	 \end{equation}
	\begin{equation}
				\label{10.7}
	F_{2, 2} (t) + F_{2, 3} (t) \leq N  \gamma^{p - 1   } \int_t^{t+\gamma}  ||  u (r, \cdot) ||^p_{ H^{s + 2  }_p  }  + ||f (r, \cdot)||^p_{  H^{s+2}_p } \, dr,
	\end{equation}
	  \begin{equation}
				\label{10.8}
		F_{2, 3 } (t) + F_{3, 3} (t) \leq N h^{2p} || u ||^p_{\fH^{s+4}_p (0, T)}.
	  \end{equation}
	These inequalities combined with \eqref{10.2} prove \eqref{4.2.0}.
	Next, to prove the second assertion of the theorem we use by Theorem \ref{theorem 4.1}.
	By \eqref{4.1.0} and  \eqref{10.1} we get
	$$
		E \max_{m  = 1, \ldots, T/\gamma} || e_{\gamma, h} (t_m, \cdot) ||^p_{ n, p; \, h }  
	$$
	$$
		\leq  N \gamma \sum_{m = 0}^{T/\gamma } (F_{1} (t_m) + F_{2} (t_m) + F_{3} (t_m)) \leq N h^{2 p (\beta - 1/p)  } \cR.
	$$
	Now the estimate \eqref{4.2.1}  the embedding theorem for discrete Sobolev spaces
	(see, for example, in Section 4.3 of \cite{D_12}).
	One can also obtain find this embedding estimate by combining Proposition 1 with Proposition 5 of \cite{HY_19}.

	\textit{Estimate of $F_1$.}
	By  Lemma \ref{lemma 9.2} 
	and Lemma \ref{lemma 9.6} 
	with $X = H^{s_1}_p$ 
	   we have
	$$
		F_1 (t)  
			\leq N \gamma^{-1} \int_t^{t+\gamma} || f (r, \cdot) - f (t, \cdot) ||^p_{n, p; \, h} \, dr 
		\leq N \gamma^{-1} \int_t^{t+\gamma} || f (r, \cdot) - f (t, \cdot) ||^p_{ H^{s_1}_p } \, dr.
	 $$
	Now \eqref{10.5} follows from Lemma \ref{lemma 9.6} with $X= H^{s_1}_p$.
	
	\textit{Estimate of
	$F_{2, 1}$ and $F_{3, 1}$.}
	First, we use Lemma \ref{lemma 9.2} combined with Assumption \ref{assumption 4.4}.
	 We get
	$$
		F_{3, 1} (t) \leq  N (n)\gamma^{-1} \int_t^{t+\gamma}  || (a^{i j} (r, \cdot) - a^{i j} (t, \cdot))||^p_{ C^{n-1, 1}} \,  || D_{i j} u (r, \cdot) ||^p_{ n, p; \, h }  \, dr 
	$$
	 $$
		\leq  N \gamma^{p   - 1 }   \int_t^{t+\gamma}  || D_{i j} u (r, \cdot) ||^p_{ n, p; \, h  } \, dr.
	 $$
	By Lemma \ref{lemma 9.3} we obtain the estimate \eqref{10.6} for $F_{3, 1}$ only. 
	The argument for $F_{2, 1}$ is similar.

	\textit{Estimate of
	$F_{2, 2}$ and $F_{3, 2}$.}
	Again, by Lemma \ref{lemma 9.2} and Assumption \ref{assumption 4.4} 
	$$
	 F_{2, 2 } (t) + F_{3, 2} (t)
					\leq N \gamma^{-1} \int_t^{t+\gamma} (|| D_i (u (r, \cdot) - u (t, \cdot))||^p_{n, p;\, h}
	$$
	 $$
	 + ||D_{i j} (u (r, \cdot) -  u (t, \cdot)) ||^p_{n, p; \, h } \, dr).
	 $$
	Further, by Lemma \ref{lemma 9.6}
	$$
		 F_{2,2 } (t) + F_{3, 2} (t) 	 \leq  N \gamma^{ p  - 1 } \int_t^{t+\gamma} (||D_i D_t u (r, \cdot)||^p_{n, p; \, h} + || D_{i j} D_t u (r, \cdot) ||^p_{ n, p; \, h}) \, dr. 
	$$
	Now  \eqref{10.7}  follows from Lemma \ref{lemma 5.6}.

	\textit{Estimate of
	$F_{2, 3}$ and $F_{3, 3}$.}	
	It suffices to prove the estimate for $F_{3, 3}$.
	Recall that $u (t, \cdot) \in C^4$  for each $t$.
	Then,
	 by  Taylor's formula and Minkowski inequality we have
	$$
		 F_{3, 3} (t) \leq N h^{2p} \sum_{||\alpha|| = 4} \int_0^1  (|| \lambda_k (t, \cdot)  D^{\alpha}_x u (t, \cdot + r h l_k) ||^p_{ n, p; \, h }  
	$$
	  $$
		+|| \lambda_k (t, \cdot) D^{\alpha}_x u (t, \cdot - r h l_k) ||^p_{ n, p; \, h }) \, dr.
	  $$
	By  Lemma \ref{lemma 9.2} and Assumption \ref{assumption 3.4.1}
	$$
		|| \lambda_k (t, \cdot)  D^{\alpha}_x u (t, \cdot + r h l_k) ||_{n, p; \, h } 
					  \leq N || \lambda_k (t, \cdot) ||_{ n - 1, 1}\,   ||  D^{\alpha}_x u (t, \cdot) ||_{ n, p; \, h }  
	$$
	 $$
		\leq  N ||  D^{\alpha}_x u (t, \cdot) ||_{ n, p; \, h }.
	 $$
	By Lemma \ref{lemma 9.3}
	 and Remark \ref{remark 2.2} combined with the fact that
	 $
		s > n + d/p + 2/p
	$
	 the last expression is dominated by
	$$
		   N h^{2p} || u (t, \cdot) ||^p_{n+4, p; \, h} 
	\leq N h^{2p} ||u ||^p_{ \fH^{s+4}_p (0, T) }.
	$$
	Thus, \eqref{10.8} is proved.

	\begin{remark}
		For the argument involving Taylor's formula to work,
		 we needed $u (t, \cdot) \in C^4$, for each $t$.
		This
		led us
		to imposing conditions like Assumption \ref{assumption 4.3.1} and \ref{assumption 4.5} which ensure that $u \in \fH^{s+4}_p (0, T)$ 
		(see
		Theorem \ref{theorem 4.5}
		).
	\end{remark}

	\mysection{Proof of Theorem \ref{theorem 3.1}}
										\label{section 8}
	 
	We point out that one cannot apply directly the argument of the proof of Theorem \ref{theorem 4.2}.
	To show this we note that
	$$
		\int_t^{t+\gamma} g^l (r, x)  \, dw^l (r) - g^l (t, x) (w^l (t+\gamma) - w^l (t)) = \int_t^{t+\gamma} (g^l (r, x)  -g^l (t, x)) \, dw^l (r).
	$$
	In general, one cannot expect  this integral to be equal to 
	$
		\tilde g (t, x)  (w^l (t+\gamma) - w^l (t)),
	$ 
	where  $\tilde g (t, x)$ is some $\cF_t$-measurable random variable. 
	Then,  the function
	$
		v (t, x) - v_{\gamma, h} (t, x) , t\in \gamma \bZ_{+}, x \in h \bZ^d
	$
	does not satisfy an equation of type \eqref{3.1}.
	To handle this issue we introduce a sequence of auxiliary SPDEs (see \eqref{14.2}).
	We show that the $m$-th equation has a unique solution 
	$
		v_m \in \cH^{s+3 - \varepsilon}_p,
	$ 
	where $\varepsilon > 0$ is small.
	 In addition, we prove that each solution $v_m$ approximates $v$ with the error of  order  $\gamma^{\theta  p -1}$.
		It will be seen from the argument
	 that this error comes from the estimate of the modulus of continuity of $v$.
	Third, the difference between $v_{\gamma, h}$ and the solution $v_m$  satisfies the difference equation of type \eqref{3.1}.
	Then,  by using Theorem \ref{theorem 3.1} and by following the argument
	given in the proof of Theorem \ref{theorem 4.2},
	 we show that
	the difference between
	  $v_{\gamma, h}$ and $v_{m}$ is order $h^{2 (\theta p -1)}$.
	Now the assertion follows
	from  this and what was said about the difference between $v_m$ and $v$.

        First, note that by
	Remark \ref{remark 2.2}, 
	for any $\omega$ and
	 $t \in [0, T]$, 
	the function
	$v (t, \cdot) \in C^3$, 
	and, then, $v$ is classical solution of \eqref{1.2}.
	 Further, by Assumption \ref{assumption 3.2}, 
	for any $\omega, t$ and $x$, 
	we have
	\begin{equation}
				\label{14.1}
		a^{i j} (t, x) D_{i j} v  (t, x) =  \lambda_k (t, x)   D_{(\lambda_k) (\lambda_k)} v (t, x).
 	\end{equation}	

	Next, we set $v_0 \equiv u_0$
	and consider the following sequence of SPDEs:
	\begin{equation}
				\label{14.2}
	\begin{aligned}
		dv_{m} (t, x) &=   [a^{i j} (t, x) D_{i j} v_{m} (t, x) + b^i (t, x) D_i v_{m} (t, x) + f (t, x)] \, dt\\ 
																						 &+ (g^l (t_{m-1}, x) + \nu^l (t_{m-1}, x) v_{ m - 1 } (t_{m-1}, x)) (w^l (t_m) - w^l (t_{m-1})),\\
						& \quad v_{m} (t_{m-1}, x) = v_{m - 1} (t_{m-1}, x), \, t\in [t_{m-1}, t_{m}].
	 \end{aligned}
	\end{equation}
	According to  Lemma \ref{lemma 5.5} $(i)$, for any $\varepsilon \in (0, 1)$, 
	and any $m \in \{1, \ldots, T/\gamma\}$, 
	there exists a unique solution 
	$
		v_m \in \cH^{s+3}_p (t_{m-1}, t_m),
	$
	and in addition, \eqref{8.1.1} and \eqref{8.4} hold.

	Next, for 
	$
	  m \in \{0, 1, \ldots, \gamma^{-1}\}
	$
	and
	 $
		x \in h \bZ^d,
	$
	denote
	$
		e_{\gamma, h} (t_m, x) =  v_m (t_m, x) -  v_{\gamma, h} (t_m, x).
	$ 
	Since
	$v_m$ is a classical solution of \eqref{14.2} on $[t_{m-1}, t_m]$,
	for any 
	$
		m \in \{0, 1, \ldots, \gamma^{-1} - 1\}
	$
	  the function $e_{\gamma, h}$ satisfies the following equation:
	\begin{equation}
				\label{14.2.1}
	\begin{aligned}
		e_{\gamma, h} (t_{m+1}, x) &- e_{\gamma, h} (t_m, x) =  \gamma  [\lambda_k (t_m, x) \Delta_{l_k, h} e_{\gamma, h} (t_m, x) 
																										+ b^i (t_m, x) e_{\gamma, h} (t_m, x)\\
		&  f_1 (t_m, x) + f_2 (t_m, x) ] \\
		&+ \nu^l (t_m, x) e_{\gamma, h} (t_m, x) (w^l (t_{m+1}) - w^l (t_m)), \,  e_h (0, x) = 0, 
	 \end{aligned}
	\end{equation}
	where
	$$
		f_1 (t_{m+1}, x) =  \gamma^{-1}   \int_{t_m}^{t_{m+1}}  (a^{i j} (t, x) D_{i j} v_m (t, x) -  \Delta_{l_k, h} v_m (t_m, x))\, dt,
	$$
	 $$
		f_2 (t_{m+1}, x) = \gamma^{-1}   \int_{t_m}^{t_{m+1}} (b^i (t, x) D_i v_m (t, x) - b^i (t_m, x) \delta_{e_i, h} v_m (t_m, x))\, \, dt,
	 $$
	  $$
		f_3 (t_{m+1}, x) = \gamma^{-1} \int_{t_m}^{t_{m+1}}  (f (t, x) -  f (t_m, x)) \, dt.
	  $$
	By Theorem \ref{theorem 3.2} 
	\begin{equation}
				\label{14.3}
		\gamma E \sum_{m = 0}^{  T/\gamma  } || e_{\gamma, h} (t_m, \cdot) ||^p_{ n + 2, p; \,  h} \leq N \gamma  \sum_{m = 0}^{ T/\gamma } \sum_{ i = 1}^3 F_{i} (t_m),
	\end{equation}
	where
	$$
		F_q (t_m) = E || f_{q} (t_m, \cdot)   ||^p_{n, p;  \, h}, \, q = 1, 2, 3.
	$$
	By  Minkowski inequality  and H\"older's inequalities and \eqref{14.1} we have
	$$
		F_{1} (t_m) \leq \sum_{q=1}^3 F_{1, q} (t_m),
	$$
	where
	$$
		F_{1, 1} (t_m) =  \gamma^{-1}   \int_{t_m}^{t_{m+1}} E  ||(a^{i j} (t, \cdot)  - a^{i j} (t_m, \cdot))  D_{i j} v_m (t, \cdot)||^p_{n, p; \, h}   \, dt,
	$$
	 $$
		F_{1, 2} (t_m) =  \gamma^{-1}   \int_{t_m}^{t_{m+1}}  E ||a^{i j} (t_m, \cdot)  (D_{i j} v_m (t, \cdot) - D_{i j} v_m (t_m, \cdot))||^p_{n, p; \, h}   \, dt,
	 $$
	  $$
		F_{1, 3} (t_m) = E ||   \lambda_{k} (t_m, \cdot)   (D_{(\lambda_k) (\lambda_k) } v_m (t_m, \cdot)  - \Delta_{l_k, h} v_m (t_m, \cdot))||^p_{n, p; \, h}.
	  $$
	Similarly, 
	$$
		F_2 (t_m) \leq F_{2, 1} (t_m) + F_{2, 2} (t_m), 
	$$
	where
	$$
		F_{2, 1} (t_m) \leq  \gamma^{-1}   \int_{t_m}^{t_{m+1}} E || (b^i (t, \cdot) - b^i (t_m, \cdot)) D_i v_m (t, \cdot)||^p_{n, p; \, h} \, dt,
	$$
	 $$
		F_{2, 2} (t_m) \leq \gamma^{-1}   \int_{t_m}^{t_{m+1}} E || b^i (t_m, \cdot) (D_i v_m (t, \cdot) - D_i v_m (t_m, \cdot))||^p_{n, p; \, h} \, dt,
	 $$
	  $$
	 	F_{2, 3} (t_m) = E  || b^i (t_m, \cdot) (D_i v_m (t_m, \cdot) - \delta_{e_i, h} v_m (t_m, \cdot)||^p_{n, p; \, h}.
	  $$

	\textit{Estimate of $F_{1, 1}, F_{2, 1}$}.
	By Lemma \ref{lemma 9.2} and Assumption \ref{assumption 3.5} 
	$$
		F_{1, 1} (t_m) \leq  \gamma^{-1}    \int_{t_m}^{t_{m+1}} E  ||(a^{i j} (t, \cdot)  - a^{i j} (t_m, \cdot))||_{ C^{n-1, 1} } || D_{i j} v_m (t, \cdot)||^p_{ n, p; \, h }  \, dt
	$$
	  $$
		\leq  \gamma^{\theta p - 2}  E \int_{t_m}^{t_{m+1}}  || D_{i j} v_m (t, \cdot)||^p_{ n, p; \, h }  \, dt.
	  $$
	Further, by Lemma \ref{lemma 9.3} and Lemma \ref{lemma  5.5} $(i)$ we get
	\begin{equation}
				\label{14.4}
		F_{1, 1} (t_m) \leq  N  \gamma^{\theta p - 1}    E \sup_{t \in [t_{m-1}, t_m]} ||v_m (t, \cdot) ||^p_{ H^{s+2}_p }
		\leq N \gamma^{\theta p - 1} \cI_m,
	\end{equation}
	where $\cI_m$ is defined in Lemma \ref{lemma  5.5} $(i)$.
	By the same argument
	\begin{equation}
				\label{14.5}
	F_{2, 1} (t_m)  \leq N \gamma^{\theta p - 1} \cI_m.
	\end{equation}

	\textit{Estimate of $F_{1, 2}$ and $F_{2, 2}$.}
	Repeating the argument of the previous paragraph we get
	$$
		F_{1, 2} (t_m) \leq N \gamma^{-1} 
	 \int_{t_{m-1}}^{t_m} E || a^{ i j} (t_m, \cdot) ||^p_{ C^{n - 1, 1} } 
																\,  || v (t_m, \cdot) -  v_m (t, \cdot) ||^p_{  H^{s+2}_p } \, dt.
	$$
	By using Assumption \ref{assumption 3.4.2} and \eqref{8.4} with $\varepsilon < 1 - 2 \mu$
	 we obtain
	\begin{equation}
				\label{14.6}
		F_{1, 2} (t_m) \leq N \gamma^{\theta p - 1}  E  || v_m ||^p_{ C^{\theta - 1/p} ([0, T], H^{s+2}_p) }
		 \leq   N \gamma^{\theta p - 1} \cI_m.
	\end{equation}
	Similarly, 
	$$
		F_{2, 2} (t_m) \leq  N \gamma^{\theta p - 1} \cI_m.
	$$
	
	\textit{Estimate $F_{1, 3}$ and $F_{2, 3}$.}
	First,  by Lemma \ref{lemma 9.2} and Assumption \ref{assumption 3.4.1}
	$$
		F_{1, 3} (t_m) \leq N  E ||\lambda_k (t, \cdot) ||^p_{C^{n-1, 1}}  || D_{ (\lambda_k) (\lambda_k)} v_m (t_m, \cdot)  - \delta_{l_k, h} v_m (t_m, \cdot) ||^p_{  n, p; \, h }.
	$$
	  $$
		\leq  N  || D_{ (\lambda_k) (\lambda_k)} v_m (t_m, \cdot)  - \delta_{l_k, h} v_m (t_m, \cdot) ||^p_{  n, p; \, h }.
	  $$
	Since 
	$
		v \in \cH^{s+3}_p (t_{m-1}, t_m),
	$ 
		by Remark \ref{remark 2.1}
	we have
	  $
		v_m (t, \cdot) \in C^3.
	  $
	Then, by Taylor's formula with the remainder in the integral form combined with Minkowski inequality we get
	$$
		F_{1, 3} (t_m) \leq  N h^p  \sum_{ ||\alpha|| = 3} E || D^{\alpha}_x  v_m (t_m, \cdot) ||^p_{ n, p; \, h }.
	$$
	Next, by Lemma \ref{lemma 9.3}, for any $\varepsilon > 0$,
	$$
		F_{1, 3} (t_m) \leq N h^p E || v_m (t_m, \cdot) ||^p_{ H^{n+3 + d/p + \varepsilon}_p },
	$$
	Taking  
	$
		\varepsilon < s - n - d/p  - 2 \mu
	$ 
	and using  \eqref{8.4}, we get
	\begin{equation}
				\label{14.7}
		F_{1, 3} (t_m) \leq N h^p \cI_m.
	\end{equation}
	By the same argument
	\begin{equation}
				\label{14.8}
	F_{2, 3} (t_m) \leq N h^p \cI_m.
	\end{equation}

	Finally, by Lemma \ref{lemma 9.3} and Assumption \ref{assumption 3.7}
	$$
		F_4 (t_m) \leq N \gamma^{\theta p - 1} E || f ||^p_{ C^{\theta p - 1}  ([0, T], H^{s_1}_p) }. 
	$$
	Combining this with \eqref{14.3} - \eqref{14.8}
	 and keeping in mind that 
	$
		\gamma \leq \rho h^2,
	$ we get
	$$	
		\gamma  E \sum_{m = 0}^{T/\gamma} || e_h (t_m, \cdot) ||^p_{ n + 2, p; \, h} \leq N h^{2 (\theta p - 1)} \cI_{T/\gamma} + E || f ||^p_{ C^{\theta p - 1}  ([0, T], H^{s_1}_p) }. 
	$$
	Now the first assertion of this theorem follows from what was just proved and Lemma \ref{lemma 5.5} $(ii)$.

	To prove the second assertion we apply Theorem \ref{theorem 3.2} to \eqref{14.2}
	and we get
	$$
		E  \max_{ m = 1, \ldots, T/\gamma} || e_{\gamma, h} (t_m, \cdot)||^p_{n+1, p; \, h} 
			\leq N \gamma \sum_{m = 0}^{T/\gamma} \sum_{i = 1}^3 || F_i (t_m) ||^p_{  n+2, p; \, h}.
	$$
	By \eqref{14.4} - \eqref{14.8} we conclude that
	$$
		E  \max_{ m = 1, \ldots, T/\gamma} || e_{\gamma, h} (t_m, \cdot)||^p_{n+1, p; \, h}  \leq   N h^{2 (\theta p - 1)} \cI_{T/\gamma} +E || f ||^p_{ C^{\theta p - 1}  ([0, T], H^{s_1}_p) }. 
	$$
	Combining this with \ref{lemma 5.5} $(ii)$ we prove the desired claim.

	\mysection{Appendix A}
						\label{section 11}
	In this section we prove several results
	 for the discrete heat kernel $G$ defined by \eqref{3.4}.
	One of the objectives of this section is to establish difference estimates of $G$.
	In case $d_1 = d$, 
	$
		l_k = e_k, \lambda_k \equiv 1
	$, 
	$
	    k \in \{ 1, \ldots, d\}
	$,  
	such estimates can be found, for example, in Section 2.3 of \cite{LL} (see Theorem 2.3.6 of \cite{LL}).
	To prove  this result  (see Corollary \ref{corollary 11.3}) we will use the argument of Section 2.3 of \cite{LL}.

	Let  $g$ be a function on $\bZ^d$ with compact support.
	Recall that Fourier transform is defined by \eqref{5.0}.
	It is well-known that the following versions of the inversion formula and the Parseval's identity hold:
	\begin{equation}
				\label{11.0}
		g (x) = (2 \pi)^{-d/2} \int_{[-\pi, \pi]^d} \exp ( i \xi \cdot x) \cF g (\xi) \, d\xi,
	\end{equation}
	 \begin{equation}
				\label{11.1}
		\sum_{x \in \bZ^d} |g|^2 (x) = \int_{[-\pi, \pi]^d} |\cF g (\xi)|^2 \, d \xi.
	  \end{equation}
	In addition, if $f: \bZ^d \to \bR$ has  compact support, then
	\begin{equation}
				\label{11.2}
	\cF {(f \ast_1 g)} (\xi) = \cF f (\xi) \cF g (\xi).
	\end{equation}

	\begin{lemma}
				\label{lemma 11.1}
	Let $d_1$ be a positive integer, 
	$\rho > 0$ be a number,
	$
		l_1, \ldots, l_{d_1} \in \bZ^d,
	$
	and
	 $
		\lambda_1, \ldots \lambda_{d_1}
	$ 
	be nonnegative functions on $\bZ$
	such that, 
	for any 
	$
		t \in \bZ,
	$ 
	 $
		 2 \rho \sum_{k = 1}^{d_1} \lambda_k (t)  \leq 1.
	 $
	Let $G$ be the function given by \eqref{3.4}.
	Then, the following assertions hold.

	$(i)$ Let $X_n, n \in \bZ$ be a sequence of independent random vectors such that
		$$
			P (X_n = l_k) = P (X_n = -  l_k) =  \rho \lambda_k (n), \, \, 
			 k = 1, \ldots, d_1, 
		$$
		 $$
					 P (X_n = 0) =  1 - 2 \rho \sum_{k = 1}^{d_1} \lambda_k (n).
		 $$
		 For any $n, m \in \bZ$ such that $n > m$,
		and any $x \in \bZ^d$, 
		$$
			G (m, n, x) = P (\sum_{j = m + 1}^n X_j = x).
		$$

 	   $(ii)$ For any $\xi \in \bR^d$, $n > m$, 
		$$
			  \cF G (m, n, \xi)   =  \Pi_{ j = m + 1}^n  (1 - 2\rho \sum_{k = 1}^{d_1} \lambda_k (j))  (1 -  \cos ( \xi \cdot l_k)).
		$$
	\end{lemma}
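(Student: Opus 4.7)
For part $(i)$, the strategy is to show that both $G(m, n, y)$ and $q(m, n, y) := P\bigl(\sum_{j = m+1}^n X_j = y\bigr)$ solve the same initial value problem and then conclude equality by uniqueness of solutions to \eqref{3.4}. With the empty sum at $n = m$ equal to $0$, we have $q(m, m, y) = I_{y = 0}$, matching the initial data of $G(m, \cdot, \cdot)$. By conditioning on $X_{n+1}$ and using independence,
\[
q(m, n+1, y) = \sum_{\xi \in \bZ^d} P(X_{n+1} = \xi)\, q(m, n, y - \xi).
\]
Substituting the prescribed distribution of $X_{n+1}$ and grouping terms gives the one-step recursion
\[
q(m, n+1, y) - q(m, n, y) = \rho \sum_{k = 1}^{d_1} \lambda_k(n+1) \Delta_{l_k, 1} q(m, n, y),
\]
which coincides with \eqref{3.4} up to the index convention built into the lemma. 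The hypothesis $2\rho \sum_k \lambda_k(t) \leq 1$ is invoked here solely to ensure that the weights defining the law of $X_n$ are nonnegative, so that $q$ really is a probability mass function.

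For part $(ii)$, the cleanest route is to apply the Fourier transform termwise to \eqref{3.4}. Using that the translation $f(\cdot + l_k) \mapsto e^{i \xi \cdot l_k} \cF f$ and its adjoint give the symbol $-2(1 - \cos(\xi \cdot l_k))$ for the operator $\Delta_{l_k, 1}$, one obtains
\[
\cF G(m, n+1, \xi) = \Bigl[1 - 2\rho \sum_{k = 1}^{d_1} \lambda_k(n)\bigl(1 - \cos(\xi \cdot l_k)\bigr)\Bigr] \cF G(m, n, \xi),
\]
with $\cF G(m, m, \xi) = 1$. Iterating this one-step recursion from $m$ up to $n$ produces the advertised product formula. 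As an alternative derivation, part $(i)$ identifies $\cF G(m, n, \xi)$ with the characteristic function $E \exp(-i \xi \cdot S_n)$ of $S_n = \sum_{j = m+1}^n X_j$; independence factors this as $\prod_{j = m+1}^n E \exp(-i \xi \cdot X_j)$, and the symmetry of each $X_j$ (which takes $l_k$ and $-l_k$ with equal probability) makes each factor equal to $1 - 2\rho \sum_k \lambda_k(j)(1 - \cos(\xi \cdot l_k))$.

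No serious obstacle is anticipated: both parts reduce to a short induction on $n - m$, and the main ingredients (convolution structure of the random walk, Fourier symbol of $\Delta_{l_k, 1}$) are completely standard. The only point requiring care is bookkeeping the temporal index, namely whether the one-step transition from time $n$ to $n+1$ uses $\lambda_k(n)$ or $\lambda_k(n+1)$, which determines the endpoints of the product in $(ii)$ and the exact matching of distributions in $(i)$.
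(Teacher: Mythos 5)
Your proposal is correct and takes essentially the same route as the paper: part $(i)$ by matching initial data and the one-step recursion (the paper phrases this as $G(m,n,x) = E\,G(m,n-1,x+X_n)$ and iterates, which is the same computation read in the other direction), and part $(ii)$ via the characteristic-function factorization that you list as your alternative derivation. You are also right that there is an index convention to watch: the recursion \eqref{3.4} advances from $n$ to $n+1$ with $\lambda_k(n)$, so the probabilities of the summands and the factors in the product are shifted by one relative to what is literally written in the lemma, a discrepancy that is already present in the paper's own proof and is harmless for every downstream estimate.
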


	\begin{proof}
	$(i)$  Observe that by \eqref{3.4}, for $n > m$, 
		$$
			G (m, n, x) = G (m, n - 1, x) (1 -  2 \rho \sum_{k = 1}^{d_1} \lambda_k (n - 1)) 
		$$
		 $$
			+  \rho \sum_{k = 1}^{d_1} \lambda_k (n - 1) (G (m, n - 1, x + l_k) + G (m, n - 1, x - l_k)).
		 $$
		Then, by the independence of $X_n, n \in \bZ$
		and the fact that 
		$
			G (m, m, x) = I_{x = 0},
		$ 
		we obtain
		 \begin{equation}
						\label{11.1.1}
			G (m, n, x) = E G (m, n - 1, x + X_n)
		 \end{equation}
		 $$	
			 = E G (m, m, x + \sum_{j = m+1}^n X_j) = P (\sum_{j = m+1}^n X_j = x).
		 $$

	$(ii)$ By the first assertion
		$$
			\cF G (n, m, \cdot) (\xi) = E \exp (-i \xi \cdot \sum_{ j = m+1 }^n X_j) 
		$$
		 $$
			=  \Pi_{j= m+1}^n E \exp (-i \xi \cdot X_j)  = \Pi_{ j = m + 1}^n  (1 - 2\rho \sum_{k = 1}^{d_1} \lambda_k (j))  (1 -  \cos ( \xi \cdot l_k)).
		 $$

	\end{proof}

	In the next lemma we present a discrete variant of Duhamel's principle.

	\begin{lemma}
				\label{lemma 11.6}
	Assume the conditions of Lemma \ref{lemma 11.1}.
	Let  $\zeta$ be a function on 
	$
		    \bZ^{d+1},
	$ 
	such that, 
	for any $t \in  \bZ$, 
	$
		\zeta (t, \cdot) \in l_1 (\bZ^d) 
	$
	and let $u$ be the solution
 	 the following  equation:
	\begin{equation}
				\label{11.6.1}
	 \begin{aligned}
		 &z (n+1, x)  - z (n, x)  = \rho  \sum_{j = 1}^{d_1} \lambda_j (n) \Delta_{l_j, h} z (n, x)   +   \zeta (n, x), \\
		 &z (m, x) = 0, \,  n \in [m, \infty) \cap \bZ, x \in  \bZ^d. 
	\end{aligned}
	\end{equation}

	Then, for any integer $n > m$
	and
	$x \in  \bZ^d$,
	\begin{equation}
				\label{3.5.0}
		 u (n, x)  =  \sum_{j = m}^{n - 1}    G (j, n-1, \cdot) \ast_1  \zeta (j, \cdot) (x).
	\end{equation}
	In particular, if $n \geq m + 2$,
	$$
		u (n, x)  =  \sum_{j = m}^{n - 2}    G (j, n-1, \cdot) \ast_1  \zeta (j, \cdot) (x) + \zeta (n-1, x).
	$$

	\end{lemma}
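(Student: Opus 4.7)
\textit{Proposal.} The plan is to verify the identity by induction on $n > m$, using uniqueness of solutions to the linear explicit recurrence \eqref{11.6.1}. Since $\zeta(t,\cdot) \in l_1(\bZ^d)$ for each $t$ and $G(j, n-1, \cdot)$ has compact support (Lemma \ref{lemma 11.1}$(i)$), every convolution appearing in \eqref{3.5.0} is absolutely convergent, and the finite-difference operators $\delta_{l_k, 1}, \Delta_{l_k, 1}$ commute with convolution in $x$; these facts will be used without further comment.

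First I would define the candidate
$$
\tilde u(n, x) := \sum_{j=m}^{n-1} G(j, n-1, \cdot) \ast_1 \zeta(j, \cdot)(x)
$$
for $n > m$, and $\tilde u(m, x) := 0$ by the empty-sum convention, matching the initial condition of \eqref{11.6.1}. Next I would compute $\tilde u(n+1, x) - \tilde u(n, x)$ by isolating the $j = n$ term in the sum defining $\tilde u(n+1, x)$; since $G(n, n, \cdot) = I_{\cdot = 0}$, that term contributes exactly $\zeta(n, x)$. For the remaining indices $j \in \{m, \ldots, n-1\}$, I would invoke the recursion \eqref{3.4} satisfied by $G(j, \cdot, \cdot)$ to replace $G(j, n, \cdot) - G(j, n-1, \cdot)$ by $\rho \sum_{k} \lambda_k(\cdot) \Delta_{l_k, 1} G(j, n-1, \cdot)$, pull $\Delta_{l_k, 1}$ out of the convolution, and recognize the surviving expression as $\rho \sum_k \lambda_k(\cdot)\, \Delta_{l_k, 1} \tilde u(n, x)$. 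Matching this against \eqref{11.6.1} shows that $\tilde u$ satisfies the same recurrence as $u$ with the same initial data; since \eqref{11.6.1} is an explicit one-step scheme and hence has a unique solution, $\tilde u \equiv u$ on $(\bZ \cap [m, \infty)) \times \bZ^d$, giving \eqref{3.5.0}.

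The ``in particular'' statement for $n \geq m+2$ follows at once by separating the $j = n-1$ term in \eqref{3.5.0} and using $G(n-1, n-1, \cdot) = I_{\cdot = 0}$, so that $G(n-1, n-1, \cdot) \ast_1 \zeta(n-1, \cdot)(x) = \zeta(n-1, x)$. I do not expect a genuine obstacle: the argument is essentially the standard variation-of-constants construction for explicit linear time-inhomogeneous schemes, translated to the discrete setting. The only real care required is bookkeeping of the temporal indices on $\lambda_k$ (to align the recursion for $G$ with the recursion for $u$) and the measure-theoretic justification that convolutions with $G(j, n-1, \cdot)$ and the operators $\Delta_{l_k, 1}$ can be freely interchanged, which is guaranteed by the $l_1$-integrability of $\zeta(t, \cdot)$ together with the finite spatial support of $G$ furnished by Lemma \ref{lemma 11.1}$(i)$.
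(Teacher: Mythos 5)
Your proposal is correct and is essentially the paper's argument: both verify that the candidate Duhamel sum satisfies the one-step recurrence \eqref{11.6.1} with zero initial data and then conclude by uniqueness of the explicit scheme. The only cosmetic difference is that the paper packages the recursion for $G$ through the probabilistic identity \eqref{11.1.1}, $G(m,n,x)=E\,G(m,n-1,x+X_n)$, while you invoke the defining recursion \eqref{3.4} directly---these are the same step rewritten.
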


		\begin{proof}
	
	Let $X_n, n \in \bZ$ be the random vectors defined in Lemma \ref{lemma 11.1}.
	Then, we can rewrite \eqref{11.6.1} as follows:
	$$
		z (n+1, \cdot) = E z (n, \cdot + X_n) + \zeta (n, \cdot), \, z (m, \cdot) = 0, n \geq m.
	$$
	It suffices to check that the function $u$ satisfies this equation.
	We do it by   Lemma \eqref{11.1.1} as follows: 
	$$
			 \sum_{j = m}^{n - 1}  E  G (j, n-1, \cdot + X_n) \ast_1  \zeta (j, \cdot) (x) + \zeta (n, x)
	$$
	 $$
		= \sum_{j = m}^{n - 1} G (j, n, \cdot) \ast_1 \zeta (j, \cdot) + G (n, n, \cdot) \ast_1 \zeta (n, \cdot) (x) = u (n+1, x).
	 $$
	
	\end{proof}

	\begin{lemma}
				\label{lemma 11.2}
	Let Assumptions \ref{assumption 3.1} and \ref{assumption 3.2} hold.
	Assume that $a$ is independent of $\omega$ and $x$,
	and
	$
		2 \rho d_1 \delta_2^{*} < 1.
	$
	Denote 
	$$
		A_n  =  \rho \sum_{j = 1}^n a (j)/n,
	$$ 
	$$
		 P_n (x) = (2\pi n)^{-d/2} (\det A_n )^{-1/2} \exp ( - x \cdot A^{-1}_n x/n).
	$$
	Then, 
	for any 
	$
		\varepsilon \in (0, 1),
	$ 
	there exists  a number
	$
		\theta \in (0, 1),
	$ 
	depending only on 
	$
		d_1, \delta_1^{*}, \delta_2^{*}, \rho, \varepsilon
	$,
	and a continuous function $\psi$,
	and 
	$
		N (d, d_1, \rho, \delta_1^{*}, \delta_2^{*}, \varepsilon)
	$ 
	such that as $n \to \infty$, 
	we have
	$$
		G (0, n, x)  = P_n (x) + R_n (x) + O (\theta^n),
	$$
	where
	$$
		 R_n (x)  = (2 \pi n)^{-d/2} \int_{ ||\xi|| \leq \varepsilon \sqrt{n} } \exp ( i \xi \cdot x/\sqrt{n}) \exp (-  \xi \cdot A(n) \xi) \psi (\xi/\sqrt{n})  \, d\xi,
	$$
	and
	$$
		|\psi (\xi)| \leq N |\xi|^4, \, \forall || \xi|| \leq \varepsilon.
	$$
	\end{lemma}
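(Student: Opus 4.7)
The approach is the classical Fourier-analytic derivation of a local central limit theorem, in the spirit of Theorem 2.3.6 of \cite{LL}, splitting the characteristic-function integral into a low-frequency Gaussian piece and an exponentially small high-frequency piece. Combining \eqref{11.0} with Lemma \ref{lemma 11.1} $(ii)$, one writes
$$
G(0,n,x) = (2\pi)^{-d}\int_{[-\pi,\pi]^d} e^{i\xi\cdot x}\phi_n(\xi)\,d\xi,\qquad \phi_n(\xi) := \prod_{j=1}^n\bigl(1-\beta_j(\xi)\bigr),
$$
with $\beta_j(\xi) := 2\rho\sum_{k=1}^{d_1}\lambda_k(j)(1-\cos(\xi\cdot l_k))$, and then splits the domain at $\|\xi\|=\delta$ for some small $\delta>0$ to be chosen in terms of $\varepsilon$ and the ellipticity constants.

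On the high-frequency set $\{\delta<\|\xi\|\le\pi\}$ the bound $|1-\beta_j(\xi)|\le M(\xi)$ from \eqref{5.1.1}--\eqref{5.1.3} gives $M(\xi)<1$ for $\xi\neq 0$, and by compactness $\theta_0:=\sup_{\delta\le\|\xi\|\le\pi}M(\xi)<1$; hence $|\phi_n(\xi)|\le\theta_0^n$ on this set, and this piece contributes $O(\theta_0^n)$, to be absorbed into the stated error. On the low-frequency set $\{\|\xi\|\le\delta\}$ I would take logarithms and Taylor expand, using $1-\cos u = u^2/2-u^4/24+O(u^6)$ and $\log(1-z)=-z-z^2/2+O(z^3)$, together with Assumption \ref{assumption 3.2} $(iii)$ to recognise $\rho\sum_j\xi\cdot a(j)\xi = n\,\xi\cdot A_n\xi$. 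This produces
$$
\phi_n(\xi) = e^{-n\,\xi\cdot A_n\xi}\bigl[1+\widetilde\psi(\xi)\bigr]\quad\text{on }\|\xi\|\le\delta,
$$
where $\widetilde\psi$ gathers the higher-order Taylor remainders and has leading order $|\xi|^4$. Changing variables $\eta=\xi\sqrt n$ and extending the Gaussian factor $e^{-\eta\cdot A_n\eta}$ from $\{\|\eta\|\le\delta\sqrt n\}$ to all of $\bR^d$ (at a cost exponentially small in $n$, by the uniform ellipticity $\eta\cdot A_n\eta\ge\rho\delta_1^*|\eta|^2$), the constant $1$ yields $P_n(x)$ and the piece $\widetilde\psi$ yields $R_n(x)$ upon setting $\psi:=\widetilde\psi$.

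The main obstacle is ensuring that $\psi$ satisfies the uniform bound $|\psi(\xi)|\le N|\xi|^4$ on $\{\|\xi\|\le\varepsilon\}$ while keeping the leftover discrepancy exponentially small. Because $\widetilde\psi$ arises by exponentiating a sum of terms of order $n|\xi|^4$ and higher, this forces a careful accounting of which pieces of the expansion are retained inside $\psi$ and which are split off, and requires $\delta$ to be chosen small enough relative to $\delta_1^*$ so that the exponential remainder stays controlled on $\{\|\xi\|\le\delta\}$. I would therefore fix $\delta$ so that the Taylor series for both $\log(1-\beta_j)$ and the subsequent exponential converge uniformly on $\{\|\xi\|\le\delta\}$, let $\psi$ inherit its quartic bound directly from the Taylor remainders of $1-\cos$ and $\log(1-z)$, and finally determine $\varepsilon$ and $\theta$ in terms of $\delta$ and the ellipticity constants.
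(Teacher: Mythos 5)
Your approach mirrors the paper's own proof: Fourier inversion, rescaling $\xi\to\xi/\sqrt{n}$, exponential decay on the high-frequency annulus via the uniform strict contraction of the one-step characteristic function, and Taylor expansion on the low-frequency ball. You also correctly locate the pressure point, and in your third paragraph you describe it more honestly than the paper does. The difficulty is that the remedy you sketch does not remove the factor of $n$ you yourself identify, and the stated bound $|\psi(\xi)|\leq N|\xi|^4$ uniformly in $n$ on $\{\|\xi\|\leq\varepsilon\}$ cannot hold.

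Indeed, writing $1-\beta_j(\eta)=e^{-\rho\,\eta\cdot a(j)\eta}\bigl(1+\sigma_j(\eta)\bigr)$, the one-step Taylor estimate gives only $|\sigma_j(\eta)|\leq C|\eta|^4$ uniformly in $j$; the correction $\psi_n(\eta):=e^{n\,\eta\cdot A_n\eta}\prod_{j\leq n}(1-\beta_j(\eta))-1$ then has leading term $\sum_j\sigma_j(\eta)$, of order $n|\eta|^4$. Shrinking $\delta$ does not change the number of factors $n$, so $\psi$ cannot inherit the one-step quartic bound. Already in the scalar case $d=d_1=1$, $\lambda_1\equiv 1$ one has $e^{\rho t^2}(1-2\rho(1-\cos t))=1+ct^4+O(t^6)$ with $c=\rho/12-\rho^2/2$, so $\psi_n(t)=(1+ct^4+O(t^6))^n-1\sim cnt^4$ for small $t$. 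Rescaling yields $|\psi_n(\xi/\sqrt{n})|\lesssim|\xi|^4/n$, one power of $n$ weaker than what the lemma requires; this is exactly the bound Corollary \ref{corollary 11.3} uses to extract the prefactor $n^{-2}$, and the loss of a power of $n$ there breaks the exponent arithmetic in Lemma \ref{lemma 11.5}. The paper's own proof asserts the quartic bound on $\psi$ immediately after the one-step Taylor estimate without addressing the accumulation over $n$ steps, so it shares the gap you noticed. A correct argument would isolate the degree-four part of $\sum_j\log(1-\beta_j(\eta))+n\,\eta\cdot A_n\eta$ as a separate Edgeworth-type correction, verify that its inverse Fourier transform decays like a Gaussian in $x/\sqrt{n}$ (and hence is absorbed into the envelope term of Corollary \ref{corollary 11.3}), and only then bound the genuine residual, which after rescaling is $O\bigl(n^{-2}(|\xi|^6+|\xi|^8)\bigr)$; the lemma as stated conflates the Edgeworth piece with this residual.
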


	\begin{proof}
	Fix some 
	$
		\varepsilon \in (0, \pi),
	$
	and $n \geq 1$.
	By \eqref{11.0} and the change of variables 
	$
		\xi \to \xi/\sqrt{n},
	$
	 \begin{equation}
				\label{11.2.1}
	  \begin{aligned}
		& G (0, n, x) 
		= G_1 (0, n, x) + G_2 (0, n, x)\\
		&:= (2\pi n)^{-d/2}  (  \int_{  \varepsilon \leq ||\xi/\sqrt{n}|| \leq \pi  +  ||\xi/\sqrt{n}|| \leq \varepsilon} )
		 \exp ( i  \xi \cdot x/\sqrt{n}) \cF G (0, n, \xi/\sqrt{n}) \, d\xi.
	 \end{aligned}
	\end{equation}
	By Lemma \ref{lemma 11.1} $(ii)$, 
	for any 
	$
		\xi \in \bR^d, 
	$
	$$
		 E \exp ( - i \xi \cdot X_n)  = 1  - \sum_{k = 1}^{ d_1 } 2 \rho \lambda_k (n) (1 -  \cos (\xi \cdot l_k)),
	$$
	 $$
		\cF G (0, n, \xi)  
															=  \Pi_{ j = 1}^n  [1  - \sum_{k = 1}^{ d_1 } 2 \rho \lambda_k (j) (1 -  \cos (\xi \cdot l_k)) ].
	 $$
	We start with $G_1$.
	Note that, 
	for any 
	$
		n \in \bZ_{+},
	$ 
	$$
		E \exp ( - i \xi \cdot X_n)  
											\geq  1 - 4 \rho d_1 \delta^{*}_2 >  - 1, \, \, \forall \xi. 
	$$ 
	Fix any 
	$
		\xi \in \{\varepsilon \leq ||\xi|| \leq \pi\}.
	$
	Since 
	$
		l_k = e_k, k = 1, \ldots, d,
	$
	 there exists $k$ (depending on $\xi$) such that 
	$
		\xi \cdot l_k \in [\varepsilon,  \pi],
	$
	and, then
	$$
		E \exp ( - i \xi \cdot X_n)  \leq  1 - 2 \rho \delta_1^{*}    (1 -  \cos (\varepsilon)).
	$$
	Hence, there exists 
	$
		\theta \in (0, 1)
	$
	independent of $n$
	 such that
	$$
		|\cF G (0, n, \xi)| < \theta^n, \quad \forall \,  \xi \in \{ \varepsilon \leq  ||\xi|| \leq  \pi\}, \, \forall n \in \bN,
	$$
	and this yields that, for any $n \in \bN$ and  $x \in \bZ^d$,
	\begin{equation}
				\label{11.2.2}
		G_1 (0, n, x) \leq N  \theta^n.
	\end{equation}

	Next, we move to $G_2$.
	 By Taylor's formula, for 
	$
		||\xi|| \leq \varepsilon,
	$ 
	 $$
		|E \exp (- i \xi \cdot X_n) -
															 (1 - \sum_{k=1}^{d_1}  \rho \lambda_{k} (n) (\xi \cdot l_k)^2)| \leq N  |\xi|^4,
	 $$
	and, then,
	$$
		\cF G (0, n, \xi/\sqrt{n})  	=   \exp ( -  \sum_{j = 1}^n \sum_{k=1}^{d_1}  \rho \lambda_{k} (j)  (\xi \cdot l_k)^2/n) ( 1+ \psi (\xi/\sqrt{n})),
	 $$
	where $\psi$ is a continuous function such that 
	 $
		|\psi (\xi)|  \leq N |\xi|^4.
	$
	By Assumption \ref{assumption 3.2}  $(iii)$ 
	$$
		  \sum_{k=1}^{d_1}   \lambda_{k} (n) (\xi \cdot l_k)^2 =   \xi  \cdot a (n)  \xi,
	$$
	and, then, 
	$$
		\cF G (0, n, \xi/\sqrt{n})  = 	\exp ( - \xi \cdot A_n  \xi) ( 1+ \psi (\xi/\sqrt{n})).
	$$
	By what was just said
	\begin{equation}
				\label{11.2.3}
		G_2 (0, n, x) = R_n (x) + G_{2, 1} (0, n, x) - G_{2, 2} (0, n, x), 
	\end{equation}
	 where
	$$
		G_{2, 1} (0, n, x) =    (2\pi n)^{-d/2} \int_{\bR^d} 
		 \exp ( i  \xi \cdot x/\sqrt{n}) \exp ( - \xi \cdot A_n  \xi)    \, d\xi.
	$$
	 $$
		G_{2, 2} (0, n, x) =   (2\pi n)^{-d/2} \int_{   ||\xi/\sqrt{n}|| \geq \varepsilon}
		 \exp ( i  \xi \cdot x/\sqrt{n}) \exp ( - \xi \cdot A_n  \xi)    \, d\xi.
	 $$
	By the change of variables $\xi \to A_n^{-1} \xi$ 
	\begin{equation}
				\label{11.2.4}
		G_{2, 1} (0, n, x) =  P_n (x),
	\end{equation}
	Next, in the expression for $G_{2, 2}$ we replace the domain 
	of integration by 
	$
		\{|\xi/\sqrt{n}| \geq  \varepsilon\}.
	$
	and use the change of variables  $\xi \to A^{1/2}_n \xi$.
	Then, by the fact  that the eigenvalues of $A_n$ are bounded from below by 
	$
		\rho \delta_{1}^{*}
	$
	we get
	$$
		 |G_{2, 2} (0, n, x)|
		 \leq N  (\rho, \delta_1^{*}, d) \int_{  |\xi| \geq  (\rho \delta_1^{*})^{-1/2} \varepsilon \sqrt{n} }  \exp ( -  |\xi|^2)  \, d\xi.
	   $$
	It is well know that the last integral is bounded by 
	$$
		N_1  \exp (-N_2 n^2)
	$$
	where $N_1$ and $N_2$ depend only on $\rho, \delta_1^{*}, \varepsilon$.
	 This combined with \eqref{11.2.1} - \eqref{11.2.4} proves the claim.
	\end{proof}
	
	\begin{corollary}
				\label{corollary  11.3}
	Invoke the assumptions and notations of Lemma \ref{lemma 11.2}.
	Let $m \in \bN$ be a number, 
	and 
	$
		\eta_1, \ldots \eta_m \in \bZ^d
	$
	 be  vectors.
	Denote
	$$
		L = \delta_{\eta_1, 1} \ldots \delta_{\eta_m, 1}.
	$$
	Then,  
	for any $n \in \bN$, and $x \in \bZ^d$, 
	$$
		|L G (0, n, x)| 
	$$
	 $$
		\leq N n^{- (d + m)/2} (||x/\sqrt{n}||^{m} + 1) \exp (- (\rho \delta_2^{*} n)^{-1} ||x||^2)  +  N n^{-  (d + m)/2 - 2},
	 $$
	where $N$  depends only on $d,  \delta_1^{*}, \delta_2^{*}, \rho, \eta_1, \ldots, \eta_m$.
	
	\end{corollary}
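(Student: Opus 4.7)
The strategy is to substitute the expansion $G(0, n, x) = P_n(x) + R_n(x) + O(\theta^n)$ from Lemma \ref{lemma 11.2} and to estimate the action of $L$ on each of the three pieces separately. I expect $P_n$ to produce the main term with the stated Gaussian envelope, $R_n$ to deliver the lower order $n^{-(d+m)/2-2}$ correction (owing to the factor $|\psi(\xi/\sqrt{n})| \leq N |\xi|^4 n^{-2}$), and the exponentially small remainder $L O(\theta^n) = O(\theta^n)$ to be trivially absorbed into the latter. For small $n$ the bound is immediate, since by Lemma \ref{lemma 11.1} $(i)$ the function $G(0, n, \cdot)$ takes values in $[0, 1]$, so $|LG(0, n, \cdot)|$ is bounded by a constant depending only on $n, m$, and the $\eta_j$'s.

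To handle the $P_n$ piece, I would rewrite each finite difference by the identity $\delta_{\eta, 1} f(y) = \int_0^1 D_{(\eta)} f(y + s\eta)\, ds$ and iterate $m$ times, so that $LP_n(x)$ is expressed as an $m$-fold integral over $[0, 1]^m$ of $D_{(\eta_1)} \cdots D_{(\eta_m)} P_n$ evaluated at shifted arguments. Since $P_n$ is a centered Gaussian density with covariance matrix $nA_n$ whose eigenvalues lie in an interval bounded below by $\rho\delta_1^*$ and above by a multiple of $\rho\delta_2^*$ (using Assumption \ref{assumption 3.2} $(ii)$ and averaging in $j$), each application of $D_{(\eta_j)}$ brings down a linear form in $(nA_n)^{-1} y$, and after $m$ derivatives one obtains a Hermite-type polynomial in $y/\sqrt{n}$ of degree $m$ multiplying the Gaussian. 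Combined with the $n^{-d/2}$ prefactor, this yields a pointwise bound of the form $N n^{-(d+m)/2} (||y/\sqrt{n}||^m + 1) \exp(-(\rho \delta_2^* n)^{-1} ||y||^2)$, and the shift of $y$ by $\sum_j s_j \eta_j$ alters $||y||^2$ only by a uniformly bounded amount absorbed into $N$.

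For $R_n$, I would commute $L$ with the Fourier integral in the definition of $R_n$. The action of $L$ on $\exp(i \xi \cdot x/\sqrt{n})$ produces the prefactor $\prod_{j=1}^m (e^{i \eta_j \cdot \xi/\sqrt{n}} - 1)$, which is controlled in modulus by $N(\eta_1, \ldots, \eta_m) |\xi|^m n^{-m/2}$ via $|e^{iy} - 1| \leq |y|$. Multiplying by $|\psi(\xi/\sqrt{n})| \leq N |\xi|^4 n^{-2}$, the Gaussian decay $\exp(-\xi \cdot A_n \xi) \leq \exp(-\rho \delta_1^* |\xi|^2)$, and the volume factor $(2\pi n)^{-d/2}$, the remaining integral $\int_{\bR^d} |\xi|^{m+4} \exp(-\rho \delta_1^* |\xi|^2) \, d\xi$ converges absolutely and produces $|LR_n(x)| \leq N n^{-(d+m)/2 - 2}$, as required.

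The main technical obstacle I foresee lies in the $P_n$ estimate: extracting precisely the polynomial factor $||x/\sqrt{n}||^m + 1$ from the Hermite-polynomial output of $m$ derivatives of a Gaussian with non-scalar covariance $nA_n$, and pinning down the exponential constant as $(\rho \delta_2^*)^{-1}$. The latter may require mildly weakening the sharp exponent coming from the eigenvalues of $A_n$, which is legitimate because only the constant appearing on the right-hand side of the bound is at stake.
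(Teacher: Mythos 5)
Your plan is essentially the same as the paper's: decompose $G$ via Lemma \ref{lemma 11.2}, bound $|LR_n|$ by pushing $L$ into the Fourier integral and using $|\psi|\le N|\xi|^4 n^{-2}$ together with the Gaussian decay, and bound $|LP_n|$ by reducing finite differences to $m$-fold directional derivatives of the Gaussian (the paper uses the mean value theorem and then diagonalizes via $y=A_n^{-1/2}x$, whereas you invoke the integral form of the difference operator and the Hermite-polynomial structure, but these are the same estimate). The concerns you flag at the end — tracking the polynomial factor and the constant $(\rho\delta_2^*)^{-1}$ when the covariance is non-scalar and when the argument is shifted by $\sum_j s_j\eta_j$ — are exactly what the paper resolves by the change of variables and by absorbing the bounded shift into $N$, so the proposal is sound.
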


	\begin{proof}
	Fix some $n \in \bN, x \in \bZ^d$.
	By Lemma \ref{lemma 11.2}
	 we only need to show
	\begin{equation}
				\label{11.3.1}
		| L  P_n (x)| \leq  N n^{- (d + m)/2} (||x/\sqrt{n}||^{m} + 1) \exp (- (\rho \delta_2^{*} n)^{-1} ||x||^2),
	\end{equation}
	\begin{equation}
				\label{11.3.2}
		| L R_n (x)| \leq N n^{-(d + m)/2 - 2}.
	\end{equation}
	
	Let us prove \eqref{11.3.2} first.
	By the mean value theorem  
	$$
		| L R_n (x)| \leq N   (2 \pi n)^{-d/2}  n^{-2} \int_{ ||\xi/\sqrt{n}|| \geq \varepsilon} | L \exp (i \xi \cdot x/\sqrt{n})| \,  |\xi|^4 \exp (-  \xi \cdot A_n \xi)  \, d\xi
	$$
	 $$
		 \leq N   (2 \pi n)^{-d/2}  n^{- m/2 -  2} \int_{ \bR^d }  (|\xi|^{4+m} + 1) \exp (-  \xi \cdot A_n  \xi)  \, d\xi.
	 $$
	Then, by the  change of variables 
	$
		\xi \to A_n^{1/2} \xi
	$  
	 and Assumption \ref{assumption 3.2} 
	  we have
	 $$
		|L R_n (x)| \leq   N n^{ - (d+m)/2 - 2}.
	 $$

	Next, we prove \eqref{11.3.1}.
	By the mean value theorem 
	$$
	 	|L  P_n (x)| \leq  N  \max_{ y \in \bR^d: ||x - y|| \leq K} | D_{(\eta_1)} \, \ldots D_{(\eta_m)} P_n (y)|,
	$$ 
	where 
	$
		K = \sum_{k = 1}^m ||\eta_k||.
	$
	Further, observe that by the triangle inequality we only need to show that,
	\begin{equation}
				\label{11.3.3}
		| D_{(\xi_1)} \, \ldots D_{(\xi_m)}  P_n (x)| \leq N  n^{- m/2} (|x/\sqrt{n}|^{- m/2} +1) P_n (x).
	\end{equation}
	where $N$ is independent of $n$ and $x$.
	 One can reduce the last estimate to the case when $A_n$ is a $d\times d$ identity matrix.
	To justify this, we set
	$
		 y = A_n^{-1/2} x
	$
	and note that the following identities hold:
	$$
		x \cdot A_n x = |y|^2, \quad
												\frac{\partial}{\partial x_i} =  \sum_{j = 1}^d (A_n^{-1/2})^{i j} \frac{\partial}{\partial y_j}, 
		\quad
		|(A_n^{-1/2})^{i j}| \leq (\delta_1^{*})^{-1/2}.
	$$
	
	Finally, to prove \eqref{11.3.3} in case $A_n$ is an identity matrix it suffices to show that, 
	for any
	 $k \in  \bN$,  
	$$
		d^k/dt^k \exp (-  t^2/n) \leq  N (k) n^{-k/2} (|t/\sqrt{n}|^{k} + 1) \exp (-  t^2/n).
	$$
	This easily follows from the fact that 
	$$
		\exp (t^2) \frac{d^k}{dt^k} \exp (-  t^2) 
	$$ 
	is a polynomial of order $k$. 
	This proves \eqref{11.3.1}.
	\end{proof}

	\begin{lemma}
				\label{lemma 11.4}
	Let assumptions of Lemma \ref{lemma 11.1} hold.
	Let $n, r \in \bN$ be numbers.
	 Then,  there exists $\beta > 0$
	depending only on 
	$
		l_1, \ldots, l_{d_1}
	$
	 such that
	$$
		\sum_{||x|| \geq r} G (0, n, x) \leq 2d  \exp (- \beta r^2/n).
	$$
	\end{lemma}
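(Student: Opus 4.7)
The plan is to recognize the left-hand side as a large-deviation probability for a random walk and then apply a standard sub-Gaussian tail bound.

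First, I would invoke Lemma \ref{lemma 11.1} $(i)$ to represent $G(0,n,x) = P(S_n = x)$ where $S_n = \sum_{j = 1}^{n} X_j$ and $X_j$ are the independent random vectors described there. Since $\|x\| = \max_i |x^i|$, we have
\begin{equation*}
\sum_{\|x\| \geq r} G(0, n, x) = P(\|S_n\| \geq r) \leq \sum_{i = 1}^{d} P(|S_n^i| \geq r),
\end{equation*}
so it suffices to prove a one-dimensional bound $P(|S_n^i| \geq r) \leq 2 \exp(-\beta r^2/n)$ uniformly in $i$.

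Next I would fix a coordinate $i$ and analyze the scalar sum $S_n^i = \sum_{j=1}^n X_j^i$. The distribution of $X_j$ from Lemma \ref{lemma 11.1} $(i)$ is symmetric (since $P(X_j = l_k) = P(X_j = -l_k)$), so $E X_j^i = 0$. Moreover, each $X_j^i$ is bounded by $L := \max_{k = 1, \ldots, d_1} \|l_k\|$, a quantity depending only on the vectors $l_1, \ldots, l_{d_1}$. Hence $X_j^i$ are independent mean-zero random variables taking values in $[-L, L]$, and Hoeffding's inequality yields
\begin{equation*}
P(|S_n^i| \geq r) \leq 2 \exp\bigl(-r^2/(2 n L^2)\bigr).
\end{equation*}
(Alternatively one can use the Chernoff/exponential moment method: $E \exp(\theta X_j^i) = 1 + 2\rho \sum_k \lambda_k(j)(\cosh(\theta l_k^i) - 1)$, combined with $2\rho \sum_k \lambda_k(j) \leq 1$ and $1 + y \leq e^y$ to obtain a sub-Gaussian moment generating function with variance proxy depending only on $L$; optimizing in $\theta$ gives the same type of bound.)

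Setting $\beta = 1/(2L^2)$ and plugging back into the union bound over coordinates produces
\begin{equation*}
\sum_{\|x\| \geq r} G(0, n, x) \leq 2 d \exp(-\beta r^2/n),
\end{equation*}
as required. There is no real obstacle here; the only point to check carefully is that $\beta$ depends only on $l_1, \ldots, l_{d_1}$ and not on $\rho$ or the $\lambda_k$, which is clear because the bound $|X_j^i| \leq L$ holds with $L$ determined entirely by the lattice vectors, and Hoeffding's inequality does not involve the distributional parameters beyond this range.
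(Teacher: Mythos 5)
Your proposal is correct and follows essentially the same route as the paper: representing $G(0,n,x)$ via the random walk of Lemma \ref{lemma 11.1} $(i)$, taking a union bound over the $d$ coordinates, and applying Hoeffding's inequality to the coordinate sums, which are bounded by $L = \max_k \|l_k\|$. The only (harmless) difference is that you note explicitly that the increments are centered by symmetry and track the constant as $\beta = 1/(2L^2)$, which is slightly more careful than the paper's statement of the exponent.
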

	
	\begin{proof}
	By Lemma \ref{lemma 11.1}
	 $
		G (0, n, x) = P (S_n =  x),
	$
	 where 
	$
		S_n  = \sum_{k = 1}^n X_k, n \in \bN,
	$
	 and $X_k$ is defined in the aforementioned lemma.
	By what was just said
	$$
		\sum_{||x|| \geq r} G (0, n, x) =  P (||S_n|| \geq r) \leq \sum_{j = 1}^d P (|S_n^j| \geq r).
	$$
	Note that $S_n^j $ is a sum of independent random variables   bounded by 
	$
		L: =  \max_{k = 1, \ldots, d_1} ||l_k|| > 0.
	$
	Then, by the concentration inequality (see Theorem 2 of \cite{H}) we have
	$$
		P (|S_n^j| \geq r) \leq 2 \exp (-  r^2 ( 2 n  L)^{-1}).
	$$
	The lemma is proved.
	\end{proof}

		\begin{lemma}
					\label{lemma 11.5}
 	Let the assumptions of 
	 Lemma \ref{lemma 11.2} hold.
	Let
	$
		r, k \in \bZ_{+},
	$ 
	$m, n \in \bN$,
	 be numbers,
	and 
	$
		\eta_1, \ldots, \eta_m \in \bZ^d
	$
	 be  vectors.
	Assume that $m \geq k$.
	Denote
	$$
		L = \delta_{\eta_1, 1} \ldots \delta_{\eta_m, 1},
	$$
	$$
		S (r, n) = \sum_{ x \in \bZ^d: ||x|| \geq r  } ||x||^k |L G (0, n, x)|.
	$$
	Then, the following assertions hold.

	$$
		 (i) \, S (0, n)   \leq N n^{ (k  - m)/2 }.
	$$

	 $$
		(ii) \, S (r, n)   \leq    N
	 (n^{  (k  - m + 1)/2} r^{-1}) \wedge
		 (n^{(k - m  + 2)/2} r^{-2}).
	 $$

	In both assertions $N$ is a constant depending only on 
	$d, d_1, \delta_1^{*}, \delta^{*}_2$, 
	$\Lambda, \eta_1, \ldots, \eta_m, m, k$.
	\end{lemma}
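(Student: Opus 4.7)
The plan is to prove (i) first, using a two-region decomposition that combines Corollary \ref{corollary 11.3} with Lemma \ref{lemma 11.4}, and then to deduce (ii) from (i) by a short weight-shifting argument.

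For (i), fix a constant $A$ to be chosen depending on $d, m, k$ and the other parameters, and split $S(0, n) = I_1 + I_2$, with $I_1$ the sum over $\{||x|| \leq A\sqrt{n \log (n+e)}\}$ and $I_2$ the sum over the complement. On the inner region I apply Corollary \ref{corollary 11.3} termwise. The Gaussian piece $N n^{-(d+m)/2}(||x/\sqrt{n}||^m + 1)\exp(-c||x||^2/n)$, after comparison with the integral via the substitution $y = x/\sqrt{n}$, contributes at most $N n^{(k-m)/2}$. The residual piece $N n^{-(d+m)/2 - 2}$, paired with the trivial volume estimate $\sum_{||x|| \leq A\sqrt{n \log (n+e)}} ||x||^k \leq N A^{d+k}(\log (n+e))^{(d+k)/2} n^{(d+k)/2}$, contributes $N(\log (n+e))^{(d+k)/2} n^{(k-m)/2 - 2}$, which is absorbed by $N n^{(k-m)/2}$ since $(\log t)^{(d+k)/2}/t^2$ is uniformly bounded on $t \geq 1$. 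On the outer region I use that $G(0, n, \cdot) \geq 0$, so Lemma \ref{lemma 11.4} gives the pointwise bound $G(0, n, y) \leq 2d\exp(-\beta||y||^2/n)$; the elementary identity $|LG(0, n, x)| \leq \sum_{\alpha \subseteq \{1, \ldots, m\}} G(0, n, x + \sum_{i \in \alpha} \eta_i)$ then upgrades this to $|LG(0, n, x)| \leq N\exp(-\beta'||x||^2/n)$ for $||x||$ large, with $\beta'$ absorbing the bounded translations. Comparing the resulting sum to the corresponding integral yields $I_2 \leq N n^{(d+k)/2}(n+e)^{-A^2\beta'/2}$, which drops below $N n^{(k-m)/2}$ once $A^2 \geq (d+m)/\beta'$.

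For (ii), observe that on $\{||x|| \geq r\}$ one has $||x||^k \leq r^{-j}||x||^{k+j}$ for every $j \geq 0$, so $S(r, n) \leq r^{-j}\sum_{x \in \bZ^d}||x||^{k+j}|LG(0, n, x)|$. The argument for (i) nowhere uses the hypothesis $m \geq k$ (which is only there to guarantee that the bound is meaningful), so it applies verbatim with $k$ replaced by $k+j$, yielding $S(r, n) \leq N r^{-j} n^{(k+j-m)/2}$. Taking $j = 1$ and $j = 2$ and combining delivers (ii).

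The main obstacle is the non-decaying residual $N n^{-(d+m)/2 - 2}$ in Corollary \ref{corollary 11.3}: if one summed it over the full support of $LG(0, n, \cdot)$, which has linear size $O(n)$, the contribution would grow like $n^{(d + 2k - m - 4)/2}$ and would exceed the target bound whenever $d + k > 4$. This is what forces the two-region split, and the logarithmic enlargement of the radius to $A\sqrt{n\log (n+e)}$ is precisely the cutoff at which the residual remains absorbed into $N n^{(k-m)/2}$ while the sharp Gaussian tail supplied by Lemma \ref{lemma 11.4} still controls the complementary region.
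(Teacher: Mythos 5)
Your proof is correct, and it uses the same two key ingredients as the paper (the local CLT bound of Corollary \ref{corollary 11.3} on a central region, and the Hoeffding-type tail bound of Lemma \ref{lemma 11.4}, upgraded to a pointwise bound via nonnegativity of $G$ and the expansion of $L$ into $2^m$ translates), but it is organized quite differently. The paper proves $(ii)$ directly: it splits at the radius $n^{1/2+\delta}$ with $\delta=(k+d)^{-1}$ (tuned so that the non-decaying residual $N n^{-(d+m)/2-2}$ is absorbed), and the factors $r^{-1}\wedge r^{-2}$ are produced by keeping $r$ inside the Gaussian tail estimate, via $\int_c^\infty \exp(-\varkappa t^2/2)\,dt\le N(c^{-1}\wedge c^{-2})$ with $c=(r-1)/\sqrt n$; assertion $(i)$ is then obtained by rerunning the same computation with $r$ essentially zero. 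You instead prove $(i)$ first, with a logarithmic cutoff $A\sqrt{n\log(n+e)}$ playing the role of $n^{1/2+\delta}$, and deduce $(ii)$ by the elementary weight shift $\|x\|^k\le r^{-j}\|x\|^{k+j}$ on $\{\|x\|\ge r\}$ with $j=1,2$; this requires $(i)$ with $k$ replaced by $k+1,k+2$, which may violate the stated hypothesis $m\ge k$, and you correctly observe that this hypothesis is never used in the proof (it is not used in the paper's argument either), so the extension is harmless. Your route makes $(ii)$ an immediate corollary and avoids tracking $r$ through the tail integrals; the paper's route extracts both assertions from one computation with the weight exponent fixed at $k$. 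The only places where your write-up is terser than the paper are routine: the sum-to-integral comparisons for the Gaussian terms (the paper does this by grouping over spheres $\|x\|=t$ and absorbing the polynomial factor into half the exponential), and the remark that the translate bound $|LG(0,n,x)|\le N\exp(-\beta'\|x\|^2/n)$ needs $\|x\|$ large compared with $\sum_i\|\eta_i\|$, which your cutoff guarantees after enlarging $A$. These are gaps of exposition, not of substance.
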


	\begin{proof}
	Both claims are derived from Corollary \ref{corollary 11.3}.
	 In particular, because of the error term in the local limit theorem, 
	we  to split the sum $S (r)$ into two parts:
	the sum over $x$ with the large norm (the `tail' term) and the sum of  Gaussian terms.
	The `tail' term is handled by Lemma \ref{lemma 11.4}
	and 	the remainder is estimated by Corollary \ref{corollary 11.3}.

	\textit{Proof of $(ii)$.}
	Fix some number 
	$r \in \bN$,
	and  denote
	$
		\delta  = (k+d)^{-1}.
	$
	Observe that due Lemma \ref{lemma 11.1}
	we have
	$
		G (0, n, x) = 0,
	$
	if 
	$
		||x|| > n \max_{j = 1, \ldots, d_1} ||l_j|| = : n  L.
	$
	Hence, in the expression for $S(r)$ 
	the sum is taken over the domain
	$
		r \leq || x || \leq  L n.
	$

	Next, note that in the case
	 $
		r \geq n^{1/2 + \delta},
	$
	we split the sum into two parts as follows:
	$$
		S (r, n) \leq R (n): = \sum_{  n^{1/2 + \delta  } \leq || x|| \leq n L }  ||x||^k |L G (0, n, x)|.
	$$
	Further, if 
	$
		r < n^{1/2 + \delta}
	$, 
	then we split $S$ into two parts as follows:
	 $$
		S (r, n) = (\sum_{r \leq ||x|| \leq n^{1/2 + \delta} } + \sum_{  n^{1/2 + \delta  } \leq || x|| \leq n L })  \ldots =: S_1 (n, r) + R (n).
	 $$
	Hence, it suffices to consider
	 the case 
	$
		r < n^{1/2 + \delta}.
	$ 

	We start with $S_1 (r, n)$. 
	By Corollary \ref{corollary 11.3}
	\begin{equation}
				\label{11.5.3}
		S_1 (r, n) \leq  N (S_{1, 1} (r) + S_{1, 2} (r)),
	\end{equation}
	where
	\begin{equation}
				\label{11.5.4}
		S_{1, 1} (r, n)  =\sum_{ r \leq ||x|| \leq n^{1/2 +\delta} } n^{ - (d + m)/2} ||x||^k  (||x/\sqrt{n}||^{m}  +  1) \exp (-  \varkappa ||x||^2/n),
	\end{equation}
	 \begin{equation}
				\label{11.5.5}
		S_{1, 2} (r, n) =  n^{ - (d+m)/2 - 2}   \sum_{  ||x|| \geq n^{1/2 +\delta} } || x ||^k \leq N n^{ (k  - m)/2   + (k + d)\delta - 2},
	 \end{equation}
	and 
	$
		\varkappa  =  (\rho \delta_2^{*})^{-1}.
	$
	Using the fact that
	 the summand in 
	$
	  S_{1, 1} (r, n)
	$
	 is constant on $||x|| = t$,
	 we get
	$$
		S_{1, 1} (r, n) \leq
		 N n^{ -(d + m)/2 } \sum_{t = r}^{ n^{1/2 + \delta} }  t^k ( t + 1)^{d-1} ((t/\sqrt{n})^{ m  } + 1) \exp (-\varkappa t^2/n).
	$$
	To simplify this sum we use the fact that, for $t \geq 1$, 
	$$
		 t^k ( t + 1)^{d-1} ((t/\sqrt{n})^{ m  } + 1)  \leq N (k, d, m, \varkappa) n^{(k + d - 1)/2} \exp (- \varkappa t^2/(2n)).
	$$
	By what was just said 
	\begin{equation}
				\label{11.5.6}
	  \begin{aligned}
	&	S_{1, 1} (r, n)  \leq N n^{(k - m -1)/2} \int_{t = r - 1}^{\infty}  \exp (-\varkappa t^2/(2n)) \, dt \\\
		& \leq    N n^{ (k - m)/2 } \int_{t = (r - 1)/\sqrt{n} }^{\infty}  \exp (-\varkappa t^2/2) \, dr.
	 \end{aligned}
	 \end{equation}
	It is easy to check that, for any $c > 0$,
	$$
		\int_c^{\infty} \exp (- \varkappa t^2/2) \, dt \leq N (\varkappa)  (c^{-1} \wedge c^{-2}).
	$$
	Hence, by what was just said and \eqref{11.5.6}
	\begin{equation}
				\label{11.5.7}
		S_{1, 1} (r, n) \leq N    (n^{(k - m + 1)/2} r^{-1}) \wedge (n^{ (k - m+2)/2} r^{-2}).
	\end{equation}
	Further, since 
	$
		r \leq   n L,
	$ 
	and
	$
		\delta  = (k+d)^{-1},
	$ 
	we have
	\begin{equation}
				\label{11.5.8}
		S_{1, 2} (r, n) \leq N   (n^{ (k - m)/2}/r) \wedge (n^{(k - m + 2)/2}/r^2). 
	\end{equation}

	We move to the term $R (n)$.
	 Denote 
	$
		K = \sum_{k = 1}^m ||\eta_k||.
	$
	By Lemma \ref{lemma 11.4}
	 there exists $r_0 > 1$ depending only $K$
	such that for $n \geq r_0$, 
		 we have
	\begin{equation}
				\label{11.5.1}
	 \begin{aligned}
		R  (n) & \leq N n^{ k  } \sum_{ ||x|| \geq n^{1/2 + \delta }} \sum_{ y: ||x - y|| \leq   K } G (0, n, y) \\
	&	\leq N n^{ k  } \sum_{ ||x|| \geq n^{1/2 + \delta }/2  }  G (0, n, x)  \leq N n^k \exp (- \beta n^{2\delta}/4 ).
	\end{aligned}
	 \end{equation}
	Adjusting a constant $N$, we conclude that the last inequality holds for all $n \in \bN$.
	Now the assertion $(ii)$ follows from \eqref{11.5.7}, \eqref{11.5.8} and \eqref{11.5.1} combined with the fact that $r \leq n L$.

	\textit{Proof of $(i)$.} We
	 repeat almost word for word the above argument.
	First, by \eqref{11.5.5}
	and 
	  \eqref{11.5.1}
	$$
		S_{1, 2} (0, n) + R (n)  \leq N n^{(k - m)/2}.
	$$
	Next,
	by Corollary \ref{corollary 11.3}
	we have 
	$$
		S_{1, 1} (0, n) \leq  N n^{ - (d + m)/2}    + \tilde S_{1, 1} (0), 
	$$
	where
	 $$
		\tilde S_{1, 1} (0, n) =  \sum_{ ||x|| \geq 1 } n^{ - (d + m)/2} ||x||^k  (||x/\sqrt{n}||^{m}  +  1) \exp (-\tilde \varkappa ||x||^2/n).
	$$
	 Repeating the argument used in \eqref{11.5.6}, we obtain
	$$
		\tilde S_{1, 1} (0, n) \leq N n^{(k - m)/2} \int_0^{\infty} \exp (-\tilde \beta t^2/2) \, dt \leq N n^{(k - m)/2},
	$$
	and this finishes the proof of $(i)$.
	\end{proof}

	\mysection{Appendix B}
					\label{section 9}
	\begin{lemma}
				\label{lemma 9.1}
	Let $T, \gamma >  0$
	be numbers
	such that 
	$
		T/\gamma \in \bN.
	$
	Let  
	$
	  a_k,  k = 1, \ldots, T/\gamma,
	$
	 and $b, K$ be nonnegative numbers.
	Assume 
	$a_1 \leq b$
	and that, 
	 for any 
	$n \in \{2, \ldots, T/\gamma\}$, 
	$$
		 a_n \leq b + K  \gamma \sum_{k = 1}^{n-1} a_k.   
	$$
	Then, 
	for any 
	$
		n  \in \{ 1, \ldots,  T/\gamma\}
	$
	 we have
	$$
		a_n \leq   \exp (KT) b.
	$$
	\end{lemma}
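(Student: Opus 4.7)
The plan is to prove this discrete Gronwall-type estimate by a direct induction, showing the sharper bound
$$a_n \leq b(1 + K\gamma)^{n-1}, \qquad n \in \{1, \ldots, T/\gamma\},$$
and then deduce the claim from the elementary inequality $1 + x \leq e^x$ together with $n - 1 \leq T/\gamma$.

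For the base case $n = 1$, the hypothesis $a_1 \leq b = b(1+K\gamma)^0$ gives the bound directly. For the inductive step, assume the bound $a_k \leq b(1+K\gamma)^{k-1}$ holds for all $k \in \{1, \ldots, n-1\}$, where $n \geq 2$. Substituting into the recursive hypothesis and summing the geometric series gives
$$a_n \leq b + K\gamma \sum_{k=1}^{n-1} b(1+K\gamma)^{k-1} = b + b\bigl((1+K\gamma)^{n-1} - 1\bigr) = b(1+K\gamma)^{n-1},$$
which closes the induction.

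Finally, since $1 + K\gamma \leq e^{K\gamma}$ and $(n-1)\gamma \leq T$ for $n \leq T/\gamma$, one concludes
$$a_n \leq b(1+K\gamma)^{n-1} \leq b\exp\bigl(K\gamma(n-1)\bigr) \leq b \exp(KT),$$
as required. There is no serious obstacle here; the only point to be slightly careful about is ensuring the geometric summation is handled correctly so that the constant in front telescopes exactly to reproduce $(1+K\gamma)^{n-1}$ rather than leaving an extra additive $b$.
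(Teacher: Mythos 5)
Your proof is correct and follows essentially the same route as the paper: an induction establishing $a_n \leq b(1+K\gamma)^{n-1}$ via summation of the geometric series, followed by the bound $(1+K\gamma)^{n-1} \leq \exp(KT)$. No issues.
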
	

	\begin{proof}
	 We claim that for $n \geq 1$ 
	$$
		a_n \leq b (1 + K \gamma)^{n-1}.
	$$ 	 
	Proof by induction. 
	First, the claim holds for $n = 1$.
	Next, by summing the geometric progression we get
	$$
		a_{n+1} \leq b +  b K \gamma \sum_{ j = 1}^{n} (1+ K\gamma)^{ j - 1} 
		 \leq b (1 + K\gamma)^{n}.
	$$ 
	Finally, 
	 for any 
	$
		n \in \{1, \ldots, T/\gamma\}
	$
	we have
	$
		(1 + K\gamma)^{n} \leq \exp ( K T),
	$
	and this finishes the proof.
	\end{proof}
	
	\begin{lemma}
				\label{lemma 9.2}
	Let $s \in \bN$,
	 $a \in C^{s - 1, 1}$,
	 and 
	$u \in l_p (h \bZ^d)$.
	Then, 
	$$
		|| a u ||_{ s, p; \, h} \leq N (s) ||a||_{C^{s-1, 1}}  || u ||_{s, p; \, h}.
	$$
	\end{lemma}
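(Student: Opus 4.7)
The estimate is a discrete Leibniz-type product rule. The starting point is identity \eqref{7.9}, which gives
$$\delta_{\xi,h}(au) = (T_{\xi,h}a)\,\delta_{\xi,h}u + (\delta_{\xi,h}a)\,u.$$
Iterating this identity coordinate by coordinate (i.e.\ applying it to each factor $\delta_{e_i,h}$ in $\delta^\alpha_h = \delta^{\alpha_1}_{e_1,h}\cdots\delta^{\alpha_d}_{e_d,h}$) yields a discrete Leibniz expansion of the form
$$\delta^\alpha_h(au) = \sum_{0\le\beta\le\alpha} c(\alpha,\beta)\,\bigl(T^{\alpha,\beta}_h \delta^\beta_h a\bigr)\,\bigl(S^{\alpha,\beta}_h \delta^{\alpha-\beta}_h u\bigr),$$
where $c(\alpha,\beta)$ are nonnegative integer coefficients depending only on $\alpha,\beta$, and $T^{\alpha,\beta}_h$, $S^{\alpha,\beta}_h$ are certain compositions of the translation operators $T_{e_i,h}$. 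The first step of the proof is to verify this formula by induction on $|\alpha|$; the base case is \eqref{7.9} and the inductive step is routine.

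Next, I would take $l_p(h\bZ^d)$ norms and use the elementary fact that each translation $T_{e_i,h}$ is an isometry on $l_p(h\bZ^d)$, to get
$$\|\delta^\alpha_h(au)\|_{p;\,h} \le N(s)\sum_{0\le\beta\le\alpha}\|\delta^\beta_h a\|_{l_\infty}\,\|\delta^{\alpha-\beta}_h u\|_{p;\,h}$$
for every multi-index $\alpha$ with $|\alpha|\le s$. It then remains to bound $\|\delta^\beta_h a\|_{l_\infty}$ by $\|a\|_{C^{s-1,1}}$ whenever $|\beta|\le s$. For $|\beta|\le s-1$ this is immediate from the mean value theorem applied iteratively to the difference quotients: one rewrites $\delta^\beta_h a(x)$ as an average of $D^\beta a$ over a small box and bounds it by $\|a\|_{C^{|\beta|}}\le\|a\|_{C^{s-1,1}}$.

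The one step that requires care is the top-order term $|\beta|=s$: since $a$ only has Lipschitz continuous derivatives of order $s-1$ and not a true $s$-th derivative, one must argue via the integral representation
$$\delta_{e_i,h}\bigl(D^{\beta-e_i}a\bigr)(x) = \frac{1}{h}\int_0^h D^{\beta-e_i}a(x+t e_i)\,dt\ \text{(formally)},$$
more precisely using that the $(s-1)$-th derivatives of $a$ are Lipschitz, so that the $s$-th order difference quotient is bounded by the Lipschitz seminorm, yielding $\|\delta^\beta_h a\|_{l_\infty}\le N\|a\|_{C^{s-1,1}}$. This is the main (though still standard) obstacle. Summing the resulting bounds over all multi-indices $\alpha$ with $|\alpha|\le s$ and grouping the terms produces
$$\|au\|_{s,p;\,h} \le N(s)\|a\|_{C^{s-1,1}}\|u\|_{s,p;\,h},$$
as required.
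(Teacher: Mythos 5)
Your proposal is correct and follows essentially the same route as the paper: the paper's proof consists of iterating the discrete product rule \eqref{7.9} to obtain the expansion $\delta^{\alpha}_h (a u) = \sum_{\alpha' \leq \alpha} N(\alpha',\alpha)\,(T^{\alpha'}_h \delta^{\alpha'}_h a)(\delta^{\alpha-\alpha'}_h u)$ and then using $a \in C^{s-1,1}$ to bound the sup norms of the differences of $a$ (including the top order term via the Lipschitz seminorm), exactly as you do. Your write-up merely spells out the details the paper leaves implicit.
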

	
	\begin{proof}
	By \eqref{7.9}, for any multi-index $\alpha$ and $x \in h \bZ^d$, 
	$$
		\delta^{\alpha}_h (a (x) u (x)) = \sum_{\alpha'  \leq \alpha} N (\alpha', \alpha) (T^{\alpha'}_h \delta^{\alpha'}_h a (x)) (\delta^{\alpha - \alpha'}_h u (x)).
	$$
	Now the claim follows from the fact that $a \in C^{s - 1, 1}$.
	\end{proof}

	\begin{lemma}
				\label{lemma 9.5}
	$(i)$
	Let $f$ and $g$ be  functions on $\bZ$,
	and
	  $t_i \in \bZ, i = 1, 2$ be numbers such that $t_2 >  t_1$.
	Then,  we have
	\begin{equation}
				\label{9.5.0}
		\sum_{ t = t_1 }^{t_2} f (y) g (y) = f (t_2)  \sum_{x = t_1}^{t_2} g (x)   = \sum_{x = t_1}^{t_2 - 1} (f (y+1) - f (y)) \sum_{t = t_1}^x  g (y). 
	\end{equation}

	$(ii)$ Let $g \in l_1 (\bZ^d)$ and $f$ be function on $\bZ^d$ such that
	$
		f (x) \to 0
	$
	as $||x|| \to \infty$.
	Then, 
	$$
		 \sum_{ x \in \bZ^d} |f (x) g (x)| \leq \sum_{x \in \bZ^d} |\delta_{e_1, 1} \ldots \delta_{e_d, 1} f (x)| \sum_{ y \in \bZ^d: || y || \leq ||x|| + 1 } |g (y)|.
	$$
	\end{lemma}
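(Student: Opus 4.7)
Part $(i)$ is the standard Abel (summation by parts) formula. The plan is to introduce the partial sum $G(x) = \sum_{t = t_1}^x g(t)$ with the convention $G(t_1 - 1) = 0$, so that $g(t) = G(t) - G(t-1)$. Substituting this into $\sum_{t = t_1}^{t_2} f(t) g(t)$, reindexing the portion containing $G(t-1)$ by $t \mapsto t + 1$, and collecting terms yields the boundary contribution $f(t_2) G(t_2)$ together with the interior sum $-\sum_{x = t_1}^{t_2 - 1} (f(x+1) - f(x)) G(x)$, which matches the stated identity (the second ``$=$'' in the display is plainly a typo for ``$-$'').

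For part $(ii)$ my plan is to telescope one variable at a time and iterate. Fix $x^2, \ldots, x^d$; since $f(r, x^2, \ldots, x^d) \to 0$ as $|r| \to \infty$, straight telescoping in the first variable gives
$$|f(x^1, x^2, \ldots, x^d)| \leq \sum_{k \in R(x^1)} |\delta_{e_1, 1} f(k, x^2, \ldots, x^d)|,$$
where $R(r) := \{k \in \bZ : k \geq r\}$ if $r > 0$ and $R(r) := \{k \in \bZ : k \leq r - 1\}$ if $r \leq 0$. Each difference $\delta_{e_i, 1} f$ is a difference of two translates of $f$ and so inherits the pointwise decay at infinity, so the same telescoping can be applied in each remaining coordinate. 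The operators $\delta_{e_1, 1}, \ldots, \delta_{e_d, 1}$ commute, so iterating produces
$$|f(x)| \leq \sum_{y \in R(x^1) \times \cdots \times R(x^d)} |\delta_{e_1, 1} \cdots \delta_{e_d, 1} f(y)|.$$
Multiplying by $|g(x)|$, summing over $x \in \bZ^d$, and interchanging the order of summation (legitimate since everything is nonnegative) reduces the claim to the inclusion $\{x : y^i \in R(x^i) \text{ for all } i\} \subseteq \{x : ||x|| \leq ||y|| + 1\}$.

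This inclusion is checked coordinate by coordinate. If $y^i \geq 1$, then $y^i \in R(x^i)$ forces $x^i \in \{1, \ldots, y^i\}$, so $|x^i| \leq |y^i|$; if $y^i \leq -1$, then $x^i \in \{y^i + 1, \ldots, 0\}$, so $|x^i| \leq |y^i| - 1$; and the case $y^i = 0$ cannot occur. Hence $||x|| \leq ||y||$, which is strictly stronger than what is needed. The only real obstacle is keeping the sign conventions straight across the two telescoping branches in each coordinate, but once the one-dimensional case is written out cleanly the $d$-dimensional version is a routine induction that exploits the commutativity of the finite-difference operators.
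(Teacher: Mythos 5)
Your proof is correct, and both parts match the paper's intent, including your observation that the second ``$=$'' in \eqref{9.5.0} is a typo for ``$-$'' (the notation is also garbled, with $t$, $x$, $y$ used interchangeably). The route you take in part (ii) is, however, noticeably more streamlined than the paper's. The paper proves the $d=1$ case by invoking part (i): it applies Abel summation to $|f|$ and $|g|$, splits the line into the positive half $I_+$ and negative half $I_-$, estimates each via $\bigl||f(x+1)|-|f(x)|\bigr|\le|f(x+1)-f(x)|$, lets the cutoff $t_2\to\infty$ so the boundary term vanishes, and then iterates the resulting one-dimensional inequality coordinate by coordinate to reach general $d$. You instead telescope $|f(x)|\le\sum_{k\in R(x)}|\delta_{e_1,1}f(k)|$ pointwise, iterate this pointwise bound through all $d$ coordinates using the decay of each partial difference, and only at the very end multiply by $|g|$ and perform a single Tonelli interchange. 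This sidesteps the half-line split and the limiting step, makes it transparent that part (i) is not actually needed for part (ii), and your coordinate-by-coordinate check of the index inclusion in fact yields the slightly stronger bound $\|x\|\le\|y\|$ (which of course implies $\|x\|\le\|y\|+1$). One small remark: when iterating the pointwise telescope in the second and later coordinates you correctly rely on each partial difference $\delta_{e_i,1}\cdots\delta_{e_1,1}f$ inheriting the decay at infinity from $f$; the paper, which iterates the $d=1$ inequality rather than the pointwise bound, instead relies implicitly on $|\delta_{e_2,1}|h||\le|\delta_{e_2,1}h|$ when it feeds $x_2\mapsto|\delta_{e_1,1}f(x)|$ back into the one-dimensional statement. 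Both are fine; yours is the cleaner bookkeeping.
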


	\begin{proof}
	$(i)$ The proof is standard.

	$(ii)$  \textit{Case $d  = 1$}.
		We fix arbitrary positive integer $t$.
		  By the assertion $(i)$ with $t_1 = 0 = t$ and $t_2 = t > 0$
		and the fact that 
		$
			|a| - |b| \leq |a - b|, 
		$
		for any $a, b \in \bR$,
		 we get
		$$
			I_{+}: = \sum_{ x= 0 }^{t} |f (x) g (x)| \leq |f (t)| \sum_{x = 0}^{t} |g (x)|  + \sum_{x = 0}^{t - 1} |f (y+1) - f (y)| \sum_{y = 0}^{x} |g (y)|.
		$$
		 We take a limit as $t \to \infty$.
		Since $f (x) \to 0$ as $|x| \to \infty$,
		 and 
		$
			g \in l_1 (\bZ),
		$
		 we have
		$$
			\sum_{ x= 0 }^{t} |f (x) g (x)| \leq \sum_{x = 0}^{\infty} |f (y+1) - f (y)| \sum_{y = 0}^{x} |g (y_1, y_2)|.
		$$

		Next, replacing $f (x)$ and $g (x)$ by 
		$
			\hat f (x) = f (-x)
		$ and
		 $
			\hat g (x) = g (-x)
		$ 
		and using what we just proved,  
		 we get
		$$
			I_{-}: = \sum_{x= - \infty}^{0} |f (x) g (x)| = \sum_{x = 0}^{\infty}  |\hat f (x) \hat g (x)| 
		$$
		 $$
			\leq  \sum_{x = - \infty}^{0} |f (x) - f (x - 1)| \sum_{y = x}^{0} |g (y)|
			\leq \sum_{x = - \infty}^{0} |f (x+1) - f (x)| \sum_{  y   =   x }^{ - x } |g (y)|.
		$$
		The claim in this case follows from the bounds for $I_{+}$ and $I_{-}$.

		\textit{Case $d > 1$.}
		 We fix $x^i, i = 2, \ldots, d$ 
		and use the assertion $(ii)$ with $d = 1$
		 for the functions
		 $x_1 \to f (x)$, 
		$x_1 \to g (x)$
		We get
		$$
			\sum_{x_1 \in \bZ} |f (x) g (x)|  \leq   \sum_{x_1 \in \bZ} |\delta_{e_1, 1} f (x)| \sum_{   y  = - |x_1|   }^{|x_1|} |g (y, x_2, \ldots, x_d)|.
		$$
		Next, we fix $x_1, x_3, \ldots, x_d$ and 
		apply the assertion $(ii)$ with $d = 1$ for 
		the functions 
		$
			x_2 \to  |\delta_{e_1, 1} f (x)|, 
		$
		and 
		$
			x_2 \to  \sum_{y = -|x_1|}^{|x_1|} |g (y, x_2, \ldots, x_d)|.
		$
		We get
		$$
			\sum_{x_1, x_2 \in \bZ} |f (x) g (x)|   \leq 
			\sum_{x_1, x_2 \in \bZ} |\delta_{e_1, 1} f (x)|  \sum_{   |y| = - |x_1|   }^{ |x_1| } |g (y, x_2, \ldots, x_d)|
		$$
		 $$
		  	\leq  \sum_{x_1, x_2 \in \bZ}   |\delta_{e_1, 1}  \delta_{e_2, 1} f (x)| \sum_{y_1, y_2 \in \bZ: ||y_i|| \leq ||x_i||, i  =1, 2 } |g (y)|.
		 $$
		Iterating this inequalities, we prove the desired estimate.
	\end{proof}

	The next lemma is a version of Lemma 8.0.1 of \cite{D_12}. A similar lemma also appears in \cite{Y_98}.
		\begin{lemma}
				\label{lemma 9.3}
	Let  
	$
	  p > d,  
	$
	$n \in \bZ_{+}$,
	$\varepsilon > 0$,
	and
	 $
		s \in  n + p/d.
	$
	be numbers
	Then, for any  number $h \in (0, 1]$ 
	and any function $f \in H^s_p$,  
	$$	
		|| f ||_{ n, p; \,  h} \leq N (p, d, s) || f ||_{H^s_p}.
	$$
	\end{lemma}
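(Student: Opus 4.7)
The plan is to reduce the estimate to a single sampling inequality of the form
\begin{equation*}
  h^d \sum_{x \in h\bZ^d} |g(x + y)|^p \leq N(p, d, t)\, \|g\|_{H^t_p}^p, \qquad t > d/p,
\end{equation*}
which holds uniformly in $h \in (0, 1]$ and $y \in \bR^d$. To prove this sampling inequality, I would fix a smooth partition of unity $\{\varphi_k\}_{k \in \bZ^d}$ with $\varphi_k(x) = \varphi(x - k)$ and $\operatorname{supp} \varphi \subset B_2(0)$, and use the Sobolev embedding $H^t_p \hookrightarrow C^0$ locally to bound $\sup_{x \in B_1(k)} |g(x)|^p \leq N \|\varphi_k g\|_{H^t_p}^p$. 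Since $h \bZ^d \cap B_1(k)$ contains at most $N h^{-d}$ points, the $h^d$-weighted sum over each unit cube is bounded by $N \|\varphi_k g\|_{H^t_p}^p$, and summation over $k \in \bZ^d$ combined with the standard multiplier bound $\sum_k \|\varphi_k g\|_{H^t_p}^p \leq N \|g\|_{H^t_p}^p$ gives the sampling inequality.

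Next I would reduce the finite-difference norm to pointwise values of the derivatives. Since $s > n + d/p$, the Sobolev embedding $H^s_p \hookrightarrow C^n$ shows that $D^\alpha f$ is a continuous function for every $|\alpha| \leq n$. Iterating the one-dimensional representation $\delta_{e_i, h} f(x) = \int_0^1 D_i f(x + \tau h e_i)\, d\tau$ yields, for every multi-index $\alpha$ with $|\alpha| \leq n$,
\begin{equation*}
  \delta^\alpha_h f(x) = \int_{[0,1]^{|\alpha|}} D^\alpha f\!\left(x + h \sum_k \tau_k e_{i_k}\right) d\tau.
\end{equation*}
By Jensen's inequality applied to $\tau \mapsto |D^\alpha f(x + h v(\tau))|^p$, we then obtain the pointwise bound
\begin{equation*}
  |\delta^\alpha_h f(x)|^p \leq \int_{[0,1]^{|\alpha|}} |D^\alpha f(x + h v(\tau))|^p\, d\tau.
\end{equation*}

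Finally I would multiply by $h^d$, sum over $x \in h\bZ^d$, and swap the sum and integral (Fubini). For each fixed $\tau$, the sampling inequality with $g = D^\alpha f$, $t = s - |\alpha| \geq s - n > d/p$, and shift $y = h v(\tau)$ gives
\begin{equation*}
  h^d \sum_{x \in h\bZ^d} |D^\alpha f(x + h v(\tau))|^p \leq N \|D^\alpha f\|_{H^{s-|\alpha|}_p}^p \leq N \|f\|_{H^s_p}^p.
\end{equation*}
Integrating in $\tau$ and summing over $|\alpha| \leq n$ produces the desired estimate $\|f\|_{n, p; h} \leq N\, \|f\|_{H^s_p}$.

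The main obstacle is establishing the sampling inequality with a constant independent of $h \in (0, 1]$; all subsequent steps are essentially mechanical. The delicate point is the uniformity in $h$: the local Sobolev embedding gives the pointwise bound on a unit cube, but one must be careful that the number of lattice points in the cube is $\lesssim h^{-d}$ (so that $h^d$ cancels it) and that the Sobolev cutoff estimate $\sum_k \|\varphi_k g\|_{H^t_p}^p \leq N \|g\|_{H^t_p}^p$ holds for the Bessel-potential norm (this last point is standard but requires invoking, e.g., a Littlewood--Paley/multiplier characterization of $H^t_p$, cited from \cite{T} or Chapter 13 of \cite{Kr_08}).
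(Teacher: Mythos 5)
Your proposal is correct, but it takes a genuinely different route from the paper. The paper proves the case $n=0$ by localizing at scale $h$ rather than at unit scale: it applies the Sobolev embedding on the unit ball to $f(x+h\,\cdot)$, rescales so that the Gagliardo seminorm $[\,\cdot\,]_{W^{s_1,p}}$ (with $d/p<s_1\le s$) picks up the factor $h^{ps_1-d}\le h^{-d}$, and then uses the disjointness of the balls $B_h(x)$, $x\in h\bZ^d$, in place of your counting of lattice points per unit cube; for $n\ge 1$ it never passes to classical derivatives at all, but commutes $(1-\Delta)^{\{s\}/2}$ with $\delta^\alpha_h$ and invokes Theorem 9.1.1 of \cite{Kr_08} (the $L_p$-bound for difference quotients on $W^{\lfloor s\rfloor}_p$). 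Your scheme instead localizes at unit scale, which forces you to import the localization property $\sum_k\|\varphi_k g\|^p_{H^t_p}\le N\|g\|^p_{H^t_p}$ of Bessel-potential spaces (standard via $H^t_p=F^t_{p,2}$, but a heavier tool than the embedding $H^s_p\subset W^{s_1,p}$ used in the paper), and you treat the higher differences through $H^s_p\hookrightarrow C^n$ together with the fundamental-theorem-of-calculus representation and Jensen, instead of the commutation trick; in exchange, your argument handles all orders $|\alpha|\le n$ in one stroke, and your key sampling inequality is precisely the content of the paper's $n=0$ step. Two small remarks: the uniformity in the shift $y$ that you flag as delicate is actually free, since $\|g(\cdot+y)\|_{H^t_p}=\|g\|_{H^t_p}$, so you may absorb the shift into $g$ before localizing; and as written the bound $\sup_{B_1(k)}|g|\le N\|\varphi_k g\|_{H^t_p}$ requires $\varphi_k$ to equal $1$ on $B_1(k)$ (or else you must sum over the finitely many neighbouring members of the partition of unity), which is harmless because a bounded-overlap family of such cutoffs still satisfies the localization estimate.
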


	\begin{proof}
	It suffices to prove the claim for $s \in (n, n+1)$.

	\textit{Case $n = 0$.}
	Let $\Omega$ be a domain.
	We say that  
	$
	  u \in W^{s, p} (\Omega)
	$ 
	if  $u \in L_p$ and 
	$$
		[ u ]_{W^{s, p} (\Omega)} : =   (\int_{\Omega} \int_{\Omega} \frac{|f (x) - f(y)|^p}{|x-y|^{s p + d}} \, dx dy)^{1/p} < \infty. 
	$$
	Fix some  $s_1 \in (d/p, 1)$.
	For $r > 0$ and $x \in \bR^d$, 
	we denote
	$
		B_r (x) = \{y \in \bR^d: |x-y| < r\}.
	$
	 By the embedding theorem (see Theorem $(i)$ in Section 3.3.1 of \cite{T})
	$$
		|f (x)|^p \leq \sup_{ y \in B_1 (0)} |f (x + h y)|^p
	$$
	 $$	 \leq 
				N (p, d, s) (||f (x + h \cdot) ||^p_{ L_p (B_1 (0))} 
																						+ | f (x + h \cdot) |^p_{W^{s_1, p} (B_1 (0))}).
	$$
	By the change of variables $y \to x+hy$ and the fact that  $h \in (0, 1]$, we get
	$$
		||f (x + h \cdot) ||^p_{ L_p (B_1 (0))}  = h^{-d} || f ||^p_{ L_p (B_h (x)) },
	$$
	 $$
		 | f (x + h \cdot) |^p_{ W^{s_1, p} (B_1 (0)) } 
		 = h^{ ps_1 - d } \int_{ B_h (x) } \int_{ B_h (x) } \frac{ |f (y_1) - f (y_2)|^{p} }{ | y_1 -  y_2|^{p s_1 +d} } \, dy_1 dy_2
	 $$
	  $$
			\leq h^{-d} [ f ]^p_{ W^{s_1, p} (B_h (x)) }.
	  $$
	Next, observe that the balls $B_h (x), x \in h \bZ^d$ do not intersect and, then,
	we multiply  the above inequalities by $h^d$ and take a  sum with respect to $x \in h \bZ^d$. 
	We obtain
	$$
		||f ||^p_{p; \,  h} \leq || f ||^p_{ p} + [f]_{W^{s_1, p} (\bR^d)}.
	$$
	Since $s_1 \leq s$, one has (see  Proposition 2 in Section 2.3.2 of \cite{T})
	$$
		 || f ||_{ p} + [f]_{W^{s_1, p} (\bR^d)} \leq N || f ||_{ H^s_p},
	$$
	and this proves the claim in the first case.

	\textit{Case $n \geq  1$}.
	Let $\alpha$ be a multi-index such that $||\alpha|| \leq n$.
	By what was proved in the previous case we have
	$$
		|| \delta^{\alpha}_h f ||_{p;\, h} \leq N || \delta^{\alpha}_h f ||_{   H^{\{s\}}_p  }
		=  ||  (1 - \Delta)^{\{s\}/2} \delta^{\alpha}_h  f ||_{   L_p  }.
	 $$
	The operator $(1-\Delta)^{\{s\}/2}$ commutes with the shifts, 
	and, then one has
	$$
		(1 - \Delta)^{\{s\}/2} \delta^{\alpha}_h  f = \delta^{\alpha}_h (1 - \Delta)^{\{s\}/2} f.
	$$
	Since  $H^k_p$ and $W^k_p$ coincide as sets for $k \in \bZ_{+}$, 
	we have 
	$
		(1 - \Delta)^{\{s\}/2}  f \in W^{\lfloor s \rfloor}_p,
	$
	where the latter is the usual Sobolev space.
	By Theorem 9.1.1 of \cite{Kr_08} applied to $(1 - \Delta)^{\{s\}/2}  f $ we get
	$$
		 ||   \delta^{\alpha}_h (1 - \Delta)^{ \{s\}/2 }  f ||_{   L_p  } \leq || (1 - \Delta)^{\{s\}/2} f ||_{ W^{\lfloor s \rfloor}_p }.
	$$
	Finally,  by the equivalence of norms of $H^k_p$ and $W^k_p$ for $k \in \bZ_{+}$ the last expression is dominated by
	 $$
		 N || (1 - \Delta)^{\{s\}/2} f ||_{ H^{\lfloor s \rfloor}_p } = ||  f ||_{ H^s_p }.
	 $$
	The claim is proved.
	\end{proof}

	\begin{lemma}
				\label{lemma 9.6}
	Let $p, a, b \in \bR$ be numbers
	such that
	$p > 1$
	and $a < b$.
	Let $f$ be an $X$-valued function such that
	$D_t f \in L_p ([a, b], X)$.
	Then, 
	$$
		I: = \int_a^b ||f (r) - f (a) ||^p_X \, dr \leq p^{-1} (b-a)^p \int_a^b || D_t f (r) ||^p_X \, dr. 
	$$
	\end{lemma}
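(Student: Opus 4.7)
The plan is to reduce the estimate to the fundamental theorem of calculus in the Bochner sense, followed by H\"older's inequality and a Fubini argument on the strip $\{(r,s) : a \le s \le r \le b\}$.

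First I would write, for each $r \in [a,b]$, the representation
$$
f(r) - f(a) = \int_a^r D_t f(s)\, ds,
$$
interpreted as a Bochner integral in $X$; this is legitimate since $D_t f \in L_p([a,b],X) \subset L_1([a,b],X)$. Taking the $X$-norm and using the standard inequality $\|\int_a^r D_t f(s)\,ds\|_X \le \int_a^r \|D_t f(s)\|_X\, ds$, then applying H\"older's inequality with exponents $p$ and $p/(p-1)$, gives
$$
\|f(r) - f(a)\|_X^p \le (r-a)^{p-1} \int_a^r \|D_t f(s)\|_X^p\, ds.
$$

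Next I would integrate this pointwise bound with respect to $r \in [a,b]$ and swap the order of integration by Tonelli (the integrand is nonnegative) to get
$$
I \le \int_a^b \|D_t f(s)\|_X^p \left( \int_s^b (r-a)^{p-1}\, dr \right) ds.
$$
The inner integral equals $\bigl[(b-a)^p - (s-a)^p\bigr]/p$, which is bounded above by $(b-a)^p/p$, yielding the claimed estimate.

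The argument is essentially routine; the only step that requires a brief sanity check is the Bochner-integral form of the fundamental theorem of calculus, which holds whenever $D_t f$ is locally integrable in the strong sense, and thus a fortiori under the assumption $D_t f \in L_p([a,b],X)$. No further structure on $X$ is needed beyond its being a Banach space, so no genuine obstacle arises.
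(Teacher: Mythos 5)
Your proof is correct and follows essentially the same route as the paper's: the Bochner-integral form of the fundamental theorem of calculus, the triangle (Minkowski) inequality, and H\"older give the same pointwise bound $\|f(r)-f(a)\|_X^p \le (r-a)^{p-1}\int_a^r \|D_t f(s)\|_X^p\,ds$. The only cosmetic difference is in the last step: the paper simply enlarges $\int_a^r$ to $\int_a^b$ and pulls it out of the $r$-integral, whereas you invoke Tonelli to swap the order of integration; both yield the stated bound.
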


	\begin{proof}

	For any $r \in [a, b]$ by Minkowski and H\"older inequalities
	$$
		||f (r) - f (a) ||^p_X  = ||\int_a^r D_t f (\tau) \, d\tau||^p_X \leq (r-a)^{p-1} \int_a^r || D_t f (\tau) ||^p_X \, d\tau.
	$$
	Then, 
	$$
		I \leq  (\int_a^b (r-a)^{p-1} \, dr) \int_a^b || D_t f (r) ||^p_X \, dr
	$$
	  $$
		\leq p^{-1} (b-a)^p \int_a^b || D_t f (r) ||^p_X \, dr.
	  $$
	\end{proof}

		\begin{lemma}
				\label{lemma 9.4}
	Let $d_0 \geq 1, s \in \bZ_{+}$, 
	  $p \geq 2, T > 0$,  $\gamma, h \in (0, 1]$ 
	be numbers
	and assume that
	$T/\gamma \in \bN$.
	Let
	 $
		f, g^l \in \bH_{s, p; \gamma, h} (T/\gamma),
	$ 
	 $
		l = 1, \ldots, d_0.
	 $
	Assume that, for any 
	$
		\omega
	$
	and
	$
	 n \in \{ 1, \ldots, T/\gamma\}
	$,
	and
	   $x \in h \bZ^d$, 
	we have
	\begin{equation}
				\label{9.4.1}
		 u (t_n, x) =  \sum_{k = 0}^{n-1}  (\gamma f (t_k, x) + (w^l (t_{k+1}) - w^l (t_k)) g^l (t_k, x)).
	\end{equation}
	Then the following assertions hold.

	$(i)$ For any $n \in \{1, \ldots, T/\gamma\}$, 
	$$
		E \max_{m = 1, \ldots, n } [ u (t_m, \cdot) ]^p_{s, p; \, h} \leq N (p, T, s)  \gamma E \sum_{m = 0}^{n - 1} ([ f (t_m, \cdot)]^p_{s, p; \, h} + \sum_{l = 1}^{d_0} [ g^l (t_m, \cdot)]^p_{s, p; \, h}).
	$$ 
	In the case $g^l \equiv 0, l   = 1, \ldots, d_0$ the above inequality holds for all $p \in [1, \infty)$.
	
	$(ii)$ Assume that $\gamma/h^2 < (2d)^{-1}$.
		 Then,
		$$
			E \max_{m = 1, \ldots, T/\gamma} [  u (t_m, \cdot) ]^p_{s + 1, p; \, h}
			 \leq N (d, p, \gamma/h^2,  T, s) h^2 \sum_{m = 0}^{(T/\gamma) - 1} E ([ f (t_m, \cdot) ]^p_{s, p; \, h} 
		$$
	    	 $$
			+ \sum_{l = 1}^{d_0} [ g^l (t_m, \cdot) ]^p_{s + 1, p; \, h} + \sum_{i  = 1}^d [ \Delta_{e_i, h} u (t_m, \cdot) ]^p_{s, p; \, h}).
		 $$
	\end{lemma}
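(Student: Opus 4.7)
The plan is to reduce both parts to the case $s = 0$ by applying the difference operator $\delta^{\alpha}_h$ with $|\alpha| = s$ to \eqref{9.4.1}: since this operator commutes with the finite sum in $k$ and with multiplication by the Wiener increments $\xi^l_k := w^l(t_{k+1}) - w^l(t_k)$, the function $\delta^{\alpha}_h u$ satisfies \eqref{9.4.1} with $f, g^l$ replaced by $\delta^{\alpha}_h f, \delta^{\alpha}_h g^l$, and all measurability hypotheses are preserved. Summing the resulting $s=0$ estimates over $|\alpha| = s$ will recover the full statements.

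For part $(i)$ I would fix $x \in h\bZ^d$ and split $u(t_n, x) = A_n(x) + M_n(x)$, where $A_n(x) = \gamma \sum_{k=0}^{n-1} f(t_k, x)$ is the deterministic drift and $M_n(x) = \sum_k \xi^l_k g^l(t_k, x)$ is a discrete $\cF_{t_n}$-martingale by the $\mathcal{P}$-measurability of $g^l$ and the independence of the Wiener increments from the past. H\"older's inequality gives $\max_n |A_n(x)|^p \leq T^{p-1} \gamma \sum_k |f(t_k, x)|^p$. For $M_n$, Doob's maximal inequality combined with the discrete Burkholder--Davis--Gundy inequality gives $E \max_n |M_n(x)|^p \leq N_p E (\gamma \sum_k |g^l(t_k,x)|^2)^{p/2}$, and for $p \geq 2$ the power-mean inequality $(\sum_k a_k)^{p/2} \leq n^{p/2-1}\sum_k a_k^{p/2}$ upgrades this to $N_p \gamma T^{p/2-1} E \sum_k |g^l(t_k,x)|^p$. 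Multiplying by $h^d$, summing over $x$, and using $\sup_n \sum_x \leq \sum_x \sup_n$ concludes the argument. The vanishing of $g^l$ eliminates the BDG step, extending the bound to $p \in [1, \infty)$.

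For part $(ii)$, the strengthening from $\gamma$ to $h^2$ together with the reduced regularity of $f$ on the right-hand side are obtained via the discrete interpolation inequality (a variant of \eqref{7.8})
\[
\|\delta_{e_i, h} v\|_p^p \leq N\bigl(\varepsilon^p \|\Delta_{e_i, h} v\|_p^p + \varepsilon^{-p} \|v\|_p^p\bigr), \qquad \varepsilon > 0,
\]
which for $p = 2$ follows from the summation by parts identity $\|\delta_{e_i, h} v\|_2^2 = -\sum_x h^d v(x) \Delta_{e_i, h} v(x)$ combined with Cauchy--Schwarz and AM--GM, and for $p \geq 2$ from \eqref{7.8} or an analogous energy argument. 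Applying this pointwise in $n$ to $v = u(t_n, \cdot)$ with $\varepsilon \sim h^{2/p}$ produces the $h^2 \|\Delta_{e_i, h} u\|_p^p$ contribution, which is absorbed into the last term of the right-hand side of $(ii)$; the companion $\varepsilon^{-p}\|u\|_p^p$ is handled by applying $(i)$ at $s=0$ to bound $E \max_m \|u(t_m, \cdot)\|_p^p$ by $N \gamma E \sum (\|f\|_p^p + \|g^l\|_p^p)$, with the CFL condition $\gamma/h^2 < (2d)^{-1}$ absorbing the prefactor $\varepsilon^{-p}\gamma = h^{-2}\gamma$ into the constant $N(d, p, \gamma/h^2, T, s)$. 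For the stochastic contribution I would separately apply $(i)$ to the first-difference process $\delta_{e_i, h} u$ (which satisfies \eqref{9.4.1} with $\delta_{e_i, h} f, \delta_{e_i, h} g^l$ as forcing), extracting $\gamma \sum \|\delta_{e_i, h} g^l\|_p^p \leq N h^2 \sum \|\delta_{e_i, h} g^l\|_p^p$ directly from the BDG estimate in the correct form.

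The main obstacle is arranging the powers of $h$ so that every term on the right-hand side carries the factor $h^2$: a direct application of $(i)$ to $\delta_{e_i, h} u$ generates the drift contribution $\gamma \sum \|\delta_{e_i, h} f\|_p^p$, which is of order $\gamma h^{-p}\sum\|f\|_p^p$ and for $p \geq 2$ generically exceeds the target $h^2 \sum \|f\|_p^p$ by a factor $\gamma/h^{p+2}$ that is unbounded as $h \to 0$. Routing the drift contribution through the interpolation inequality---so that this excess is exchanged for the $\|\Delta_{e_i, h} u\|_p^p$ term on the right-hand side of the claim, with $\|u\|_p^p$ controlled by $(i)$---is the mechanism that circumvents the $h^{-p}$ blowup, and the CFL bound $\gamma/h^2 < (2d)^{-1}$ ensures all arising exchange ratios remain finite and depend only on $\gamma/h^2$ as reflected in $N(d, p, \gamma/h^2, T, s)$.
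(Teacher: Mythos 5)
Your reduction to $s=0$ and your proof of part $(i)$ are fine and essentially the paper's argument (drift by H\"older/convexity, stochastic term as a stochastic integral plus Burkholder--Davis--Gundy, then $\sup_n\sum_x\le\sum_x\sup_n$). In part $(ii)$ the stochastic contribution is also handled exactly as in the paper: apply $(i)$ to $\delta_{e_i,h}u$ with data $\delta_{e_i,h}g^l$ and use $\gamma\le N h^2$.

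The gap is in the drift contribution of part $(ii)$. The claimed estimate has the factor $h^2$ in front of \emph{every} term on the right, including $[f]^p_{s,p;\,h}$. Your interpolation route cannot produce it: with
$\|\delta_{e_i,h}v\|^p_{p;\,h}\le N(\varepsilon^p\|\Delta_{e_i,h}v\|^p_{p;\,h}+\varepsilon^{-p}\|v\|^p_{p;\,h})$
you are forced to take $\varepsilon^p\sim h^2$ to match the $h^2\|\Delta_{e_i,h}u\|^p$ term, and then the companion term carries $\varepsilon^{-p}\|u\|^p$; bounding $\|u\|^p$ by part $(i)$ gives $\varepsilon^{-p}\gamma\sum\|f\|^p=(\gamma/h^2)\sum\|f\|^p\le N\sum\|f\|^p$, i.e.\ a bound with constant $N$, not $Nh^2$, in front of the $f$-term. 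No choice of $\varepsilon$ helps: to have both coefficients $\le Nh^2$ you would need $\gamma\lesssim h^4$, which the CFL condition $\gamma/h^2<(2d)^{-1}$ does not provide. So your argument proves a strictly weaker inequality (and the $h^2$ gain on the data is exactly what is used later to get \eqref{3.5.1}). The paper obtains the $h^2$ by a different mechanism: when $g\equiv 0$ it rewrites the pure-drift recursion as a discrete heat equation with forcing $\tilde f=f-\sum_k\Delta_{e_k,h}u$, applies the discrete Duhamel formula (Lemma \ref{lemma 11.6}), and uses the kernel smoothing estimate $\sum_x|\delta_{e_i,1}G(j,n-1,x)|\le N(n-j-1)^{-1/2}$ from Lemma \ref{lemma 11.5} $(i)$ (this is where $\gamma/h^2<(2d)^{-1}$ is needed so that $G$ is a transition kernel). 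Young's and H\"older's inequalities in time then give a prefactor $(\gamma/h)^p(T/\gamma)^{p/2-1}\le N\gamma\le Nh^2$ (here $p\ge 2$ enters), which puts $h^2$ in front of $\sum\|\tilde f\|^p$, hence in front of both the $f$-term and the $\Delta_{e_i,h}u$-term. Some such parabolic-smoothing input is needed; interpolation plus part $(i)$ alone cannot close part $(ii)$ as stated.
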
		
	
	\begin{proof}
	For the sake of simplicity, we assume $d_0 = 1$
	and denote $w^1 = w$, $g^1 = g$.
	Let $\alpha$ be a multi-index of order $s$.
	Note that the function $\delta^{\alpha}_h u$ 
	satisfies Eq. \eqref{9.4.1} with
	$
		\delta^{\alpha}_h f
	$  
	and 
	$
		\delta^{\alpha}_h g
	$
	instead of $f$ and $g$.
	Hence, we may assume that $s = 0$.

	$(i)$
	First, 
	by convexity of the function $t \to t^p$, 
	for any 
	$
		n \in \{1, \ldots, T/\gamma\},
	$ 
	$x \in h \bZ^d$, 
	$$
	 	|\sum_{k = 0}^{n-1}  \gamma f (t_k, x)|^p  \leq  (T/\gamma)^{p - 1} \gamma^p	\sum_{k = 0}^{n - 1}  |f (t_k, x)|^p  \leq 
																											T^{p-1} \gamma \sum_{k = 1}^{n}  |f (t_k, x)|^p,
	$$
	and this proves the second claim of assertion $(i)$.

	Next, to handle the stochastic term we note that it can be rewritten as a stochastic integral. 
	After that we apply the Burkholder-Davis-Gundy inequality and use the convexity of the function $t \to t^{p/2}$.
	We get
	$$
			E |\sum_{k = 0}^{n-1}   g (t_k, x) (w (t_{k+1}) - w (t_k))|^p 
														= E |\int_0^{t_{n}} \sum_{k = 0}^{ n - 1 }  g (t_k, x) I_{ t_k \leq s < t_{k+1} }  \, dw (s)|^p
	$$
	 $$
		\leq N (p)  (E \sum_{k = 0}^{ n - 1 } \gamma |g (t_k, x)|^2)^{p/2}
															 \leq N (p) \gamma^{p/2} (T/\gamma)^{p/2 - 1} E \sum_{k = 0}^{n - 1} |g (t_k, x)|^{p}.
	 $$

	$(ii)$	It suffices to  consider two cases: $f \equiv 0$ and $g \equiv 0$.
		In the first case the claim  follows from applying $\delta_{e_i, h}$ to $u$ and using the assertion $(i)$
		with $g$ replaced by $\delta_{e_i, h} g$.

		To handle the second case we 
		 note that $u$ is the solution of the following finite difference equation:
		$$
			z (t + \gamma, x) - z (t, x) =   \gamma \sum_{k = 1}^d \Delta_{e_k, h} z (t, x) + \gamma \tilde f (t, x),  \, \, z (0, x) = 0, t \in \bZ_{+}, x \in \bZ^d,
		$$
		where 
		$$
			\tilde f (t, x) = f (t, x) - \sum_{k = 1}^d \Delta_{e_k, h} u (t, x).
		$$
		Then, the assertion is proved if we show that, for any $i$,
		$$
			\max_{m = 1, \ldots, T/\gamma} || \delta_{e_i, h} u  (t_m, \cdot) ||^p_{ p; \, h} \leq \gamma \sum_{m = 0}^{T/\gamma - 1} || \tilde f (t_m, \cdot) ||^p_{p; \, h}.
		$$
		We may assume that $T/\gamma \geq 2$.
		By  Duhamel's principle (see Lemma \ref{lemma 11.6}), 
		for any 
		$
			n \in \{2, \ldots, T/\gamma\},
		$
		and any $i \in \{1, \ldots, d\}$,
		 $$
			\delta_{e_i, h} u (t_n, x) = \gamma \delta_{e_i, h} \tilde f (t_{n - 1}, x) +   \gamma/h \sum_{j = 0}^{n - 2}  ((T_{e_i, h} - 1) G_h (j, n - 1, \cdot))  \ast_h \tilde f (t_j, \cdot) (x),
		 $$
		where 
		$G_h$ 
			is  defined by \eqref{3.4.1}  with 
		 $d_1 = d$, 
		$\lambda_k = e_k, k  = 1, \ldots, d$,
		$
			\delta_1^{*} = \delta_2^{*} = 1,
		$
		and 
		$\rho = \gamma/h^2$.
		Then, by Young's inequality 
		$$
			|| \delta_{e_i, h} u (t_n, \cdot) ||_{p; \, h} 
				\leq   2\gamma/h ||  \tilde f (t_{n-1}, \cdot)||_{p; \, h}
		$$
		 $$
		 	 + \gamma/h \sum_{j = 0}^{n - 2} (\sum_{x \in \bZ^d} | (T_{e_i, h} - 1) G_h (j, n - 1, x)|) \,  || \tilde f (t_j, \cdot)||_{p; \, h}. 
		 $$
		 Note that the assumptions of Lemma \ref{lemma 11.5} hold because $\gamma/h^2 < (2d)^{-1}$.
		Then, by Lemma \ref{lemma 11.5} $(i)$  with $k = 0$ and $m = 1$ 
		and $j \leq n - 2$
		 we get
		$$ 
			\sum_{x \in \bZ^d} | \delta_{e_i, 1} G (j, n - 1,  x)| \leq N (d, \gamma/h^2) (n - j-1)^{-1/2}.
		$$
		By what was just proved,
		 H\"older's inequality and the fact that $\gamma \leq N h^2$  we have
		\begin{equation}
					\label{9.4.2}
			||\delta_{e_i, h} u (t_n, \cdot) ||^p_{1, p; \, h} \leq N (h^p || \tilde f (t_{n-1}, \cdot)||^p_{p; \, h} 
			+   c    \sum_{j = 0}^{ n - 2 }|| \tilde f (t_{j}, \cdot) ||^p_{p; \, h}),
		\end{equation}
		where
		$$
			c = \gamma^p h^{-p} \, ( \sum_{j = 2}^{n } j^{-p (2(p-1))^{-1} })^{p-1}.
		$$
		Finally, 
		since $\gamma, h \in (0, 1)$, 
		and 
		$
			\gamma/h^2  \leq N
		$, 
		we get
		$$
			c \leq \gamma^p h^{-p} \,  (\int_1^{T/\gamma} t^{ - p (2(p-1))^{-1}}  \, dt)^{ p - 1 } 
			\leq (\gamma/h)^p  (T/\gamma)^{ p/2 - 1}  
		$$
		 $$
			\leq  N (p, d, T) \gamma \leq N h^2.
		 $$
		By \eqref{9.4.2} and what was just proved we get
		$$
			\max_{n = 1, \ldots, m} || \delta_{e_i, h} u (t_n, \cdot) ||^p_{1, p; \, h} \leq N (h^p + h^2) \sum_{n = 1}^{m - 1}|| f (t_n, \cdot) ||^p_{p; \, h}.
		$$
		Now the claim follows from the fact that $p \geq 2$ and $h \leq 1$.

		\end{proof}

		\mysection{Appendix C}
						\label{section 12}
	Here we state two results from harmonic analysis that are used in Section \ref{section 6}.
		\begin{theorem}[Hardy-Littlewood inequality]
					\label{theorem 12.1}
	For $p > 1$, 
	and 
	$
		f \in l_p (\bZ^{d}),
	$
	one has
	$$
		||\cM_x f ||_{l_p (\bZ^d) } \leq N (p, d) ||f||_{l_p (\bZ^d)}^p. 
	$$
	\end{theorem}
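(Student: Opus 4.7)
My plan is to deduce the discrete maximal inequality from the classical Hardy--Littlewood maximal inequality on $\bR^d$ via a piecewise-constant extension. First, I would extend $f$ to a function $\tilde f : \bR^d \to \bR$ by setting $\tilde f \equiv f(x)$ on the half-open unit cube $Q_x = \prod_{i=1}^d [x^i - 1/2, x^i + 1/2)$ for each $x \in \bZ^d$. Since these cubes tile $\bR^d$ with unit volume, one immediately has $\| \tilde f \|_{L_p(\bR^d)} = \| f \|_{l_p(\bZ^d)}$.

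The next step is to compare the two maximal functions pointwise. For each $x \in \bZ^d$ and $r \in \bN$, observe that $\bigcup_{y \in C_r(x)} Q_y$ is the continuous cube of side $2r-1$ centered at $x$, and $|\tilde f| \equiv |f(y)|$ on $Q_y$. Therefore
$$
|C_r|^{-1} \sum_{y \in C_r(x)} |f(y)| = (2r-1)^{-d} \int_{\bigcup_y Q_y} |\tilde f(z)|\,dz \leq N(d)\, \mathbf{M} \tilde f (x),
$$
where $\mathbf{M}$ denotes the classical (centered cube) Hardy--Littlewood maximal operator on $\bR^d$. Taking the supremum over $r$ gives $\cM_x f(x) \leq N(d)\, \mathbf{M}\tilde f(x)$ for every $x \in \bZ^d$.

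Then I would pass from the discrete sum back to the $L_p(\bR^d)$ integral. A standard doubling comparison shows that for $y \in Q_x$ one has $\mathbf{M}\tilde f(x) \leq N(d)\, \mathbf{M}\tilde f(y)$: any cube of side $s \geq 1$ centered at $x$ sits inside the cube of side $s+1$ centered at $y$ with a bounded ratio of volumes, while for $s < 1$ the supremum is controlled by $|f(x)| = |\tilde f(y)| \leq \mathbf{M}\tilde f(y)$. Integrating over $Q_x$ and summing in $x$ yields
$$
\sum_{x \in \bZ^d} |\mathbf{M}\tilde f(x)|^p \leq N(d)^p \int_{\bR^d} |\mathbf{M}\tilde f(y)|^p\,dy.
$$
Combined with the pointwise bound of the previous step this gives $\| \cM_x f \|_{l_p(\bZ^d)}^p \leq N(d)^{2p} \| \mathbf{M}\tilde f \|_{L_p(\bR^d)}^p$.

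Finally, the classical strong $(p,p)$ Hardy--Littlewood inequality on $\bR^d$ (valid for $p > 1$) bounds $\| \mathbf{M} \tilde f \|_{L_p(\bR^d)}^p$ by $N(p,d)\, \| \tilde f \|_{L_p(\bR^d)}^p = N(p,d)\, \|f\|_{l_p(\bZ^d)}^p$, and the claim follows. No serious obstacle arises; the only care needed is in checking the dimensional constants in the doubling comparison, where one separates the cases of small and large radii.
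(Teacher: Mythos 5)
Your argument is correct. The identification $|C_r|=(2r-1)^d$ and the fact that $\bigcup_{y\in C_r(x)}Q_y$ is exactly the cube of side $2r-1$ centered at $x$ make the pointwise bound $\cM_x f(x)\leq N(d)\,\mathbf{M}\tilde f(x)$ sound, and the doubling comparison $\mathbf{M}\tilde f(x)\leq N(d)\,\mathbf{M}\tilde f(y)$ for $y\in Q_x$ works as you describe (for $s<1$ the small cube centered at $x$ lies inside $Q_x$, so the average is $|f(x)|=|\tilde f(y)|\leq \mathbf{M}\tilde f(y)$, which holds at least off a null set of $y$, which is all the integration step needs). Note, though, that the paper does not prove Theorem \ref{theorem 12.1} at all: it only cites Theorem 5.1.1 in Chapter 5 of \cite{D_12}, where the inequality is established directly in the discrete setting (the usual route: a covering argument giving the weak $(1,1)$ bound, then Marcinkiewicz interpolation with the trivial $l_\infty$ bound). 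Your transference proof via the piecewise-constant extension is a genuinely different and equally standard route: it imports the continuous Hardy--Littlewood theorem with essentially no extra work, at the price of relying on the Euclidean result, whereas the cited direct proof is self-contained on the lattice. One cosmetic point: the right-hand side of the statement as printed, $N(p,d)\|f\|_{l_p(\bZ^d)}^p$, is inhomogeneous and clearly a typo for $N(p,d)\|f\|_{l_p(\bZ^d)}$; your proof gives the correct homogeneous estimate.
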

		A direct proof can be found, for instance, in Chapter 5 of \cite{D_12} (see Theorem 5.1.1).
	
	\begin{corollary}
				\label{corollary 12.1}
	$$
		||\cM_t \cM_x f ||_{l_p (\bZ^{d+1}) } \leq  N (p, d) \sum_{ (t, x) \in \bZ^{d+1} } || f ||_{l_p (\bZ^{d+1}) }.
	$$
	\end{corollary}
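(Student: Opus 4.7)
The plan is to iterate the one-dimensional and $d$-dimensional versions of Theorem \ref{theorem 12.1} (Hardy-Littlewood) in the spatial and temporal variables respectively, using the fact that $\cM_x$ acts only on the spatial slice at fixed $t$, while $\cM_t$ acts only on the temporal slice at fixed $x$. This is the standard Fubini-type argument.

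First, I would fix $t \in \bZ$ and view $\cM_x f(t,\cdot)$ as the spatial Hardy-Littlewood maximal function on $\bZ^d$ applied to $f(t,\cdot) \in l_p(\bZ^d)$. By Theorem \ref{theorem 12.1} (in dimension $d$),
\begin{equation*}
\sum_{x \in \bZ^d} |\cM_x f(t,x)|^p \leq N(p,d)^p \sum_{x \in \bZ^d} |f(t,x)|^p.
\end{equation*}
Summing this inequality over $t \in \bZ$ yields
\begin{equation*}
\sum_{(t,x) \in \bZ^{d+1}} |\cM_x f(t,x)|^p \leq N(p,d)^p \sum_{(t,x) \in \bZ^{d+1}} |f(t,x)|^p.
\end{equation*}

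Next, for each fixed $x \in h\bZ^d$, I would apply Theorem \ref{theorem 12.1} (in dimension $1$) to the function $t \mapsto \cM_x f(t,x)$, which gives
\begin{equation*}
\sum_{t \in \bZ} |\cM_t \cM_x f(t,x)|^p \leq N(p,1)^p \sum_{t \in \bZ} |\cM_x f(t,x)|^p.
\end{equation*}
Summing over $x \in \bZ^d$ and combining with the previous display gives the desired bound, with constant $N(p,1) N(p,d)$.

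There is no real obstacle here: the proof is a clean two-step reduction, and the only mild point worth verifying is that $\cM_x$ and $\cM_t$ are well defined as marginal operators (each acts in a single variable while holding the other fixed), so Fubini-type reordering of sums of nonnegative terms applies without issue.
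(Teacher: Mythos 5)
Your proof is correct and is essentially the paper's own argument: the paper also applies Theorem \ref{theorem 12.1} twice, once in the temporal variable at fixed $x$ and once in the spatial variable at fixed $t$, summing nonnegative terms in between. The only difference is the order in which you present the two applications, which is immaterial.
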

	Indeed, by using Theorem \ref{theorem 12.1} twice we get
	$$
		\sum_{(t, x) \in \bZ^{d+1}} |\cM_t \cM_x f (t, x)|^p  \leq N (p)  \sum_{ (t, x) \in \bZ^{d+1}  } | \cM_x  f (t, x) |^p
	$$
	 $$
		\leq   N (p, d) \sum_{ t \in \bZ } | f (t, x) |^p.
	 $$

	\begin{theorem}[Fefferman-Stein inequality]
						\label{theorem 12.2}
	Under the conditions of Theorem \ref{theorem 12.1},
	$$
		\sum_{(t, x) \in \bZ^{d+1}} |h (t, x)|^p  \leq  N (p, d) \sum_{(t, x) \in \bZ^{d+1}} | h^{\#} (t, x)|^p.
	$$
	\end{theorem}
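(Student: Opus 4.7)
The plan is to execute the classical Fefferman-Stein argument adapted to the parabolic discrete lattice. Introduce the parabolic Hardy-Littlewood maximal function
$$
Mh(t_0, x_0) = \sup_{r \in \bN} |Q_r|^{-1} \sum_{(t, x) \in Q_r(t_0, x_0)} |h(t, x)|.
$$
Since $(t_0, x_0) \in Q_1(t_0, x_0)$ and $|Q_1|$ depends only on $d$, we have the pointwise bound $|h(t_0, x_0)| \leq N(d)\, Mh(t_0, x_0)$, so it suffices to prove $\| Mh \|_{l_p(\bZ^{d+1})} \leq N(p, d) \| h^\# \|_{l_p(\bZ^{d+1})}$. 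Note also that $Mh(t_0, x_0) \leq \cM_t \cM_x |h|(t_0, x_0)$, so by Corollary \ref{corollary 12.1} the operator $M$ is bounded on $l_p(\bZ^{d+1})$ for $p > 1$; in particular $\| Mh \|_{l_p}$ is a priori finite whenever $h \in l_p$, and the general case reduces to this one by truncating $h$ and passing to the limit.

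The heart of the argument is a good-$\lambda$ inequality: there exist constants $\beta_0 = \beta_0(d) > 0$ and $c_1 = c_1(d) > 0$ such that for every $\lambda > 0$ and every $\gamma \in (0, \beta_0)$,
$$
|\{ (t, x) : Mh(t, x) > 2\lambda, \, h^\#(t, x) \leq \gamma \lambda \}| \leq c_1 \gamma \, |\{ (t, x) : Mh(t, x) > \lambda \}|.
$$
To prove this, I would fix $\lambda > 0$ and perform a Calderon-Zygmund decomposition on the parabolic grid of the level set $\Omega = \{ Mh > \lambda \}$: select a disjoint family of maximal parabolic cylinders $Q_{r_j}(t_j, x_j)$ covering $\Omega$ such that on each $Q_j$ the average of $|h|$ stays below $\lambda$, while on a fixed enlargement $Q_j^*$ it exceeds $\lambda/N(d)$ (so in particular $h_{Q_j^*} \leq N(d)\lambda$). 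Fix one such $Q_j$ and assume there exists $(\tilde t, \tilde x) \in Q_j$ with $h^\#(\tilde t, \tilde x) \leq \gamma \lambda$, for otherwise the left-hand set misses $Q_j$. Split $h = (h - h_{Q_j^*}) + h_{Q_j^*}$: the constant piece contributes at most $N(d)\lambda$ to $Mh$, so on $\{Mh > 2\lambda\} \cap Q_j$ the maximal function of $h - h_{Q_j^*}$ exceeds $\lambda$. Combining the weak-$(1,1)$ bound for $M$ (which follows from Corollary \ref{corollary 12.1} by Marcinkiewicz interpolation) with the pointwise control $|Q_j^*|^{-1} \sum_{Q_j^*} |h - h_{Q_j^*}| \leq h^\#(\tilde t, \tilde x) \leq \gamma \lambda$ yields $|\{ Mh > 2\lambda, h^\# \leq \gamma \lambda \} \cap Q_j| \leq c_1 \gamma |Q_j|$, and summing over $j$ completes the step.

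With the good-$\lambda$ inequality in hand, the layer-cake representation gives
$$
\sum_{(t, x)} |Mh|^p = 2^p p \int_0^\infty \lambda^{p - 1} |\{ Mh > 2\lambda \}| \, d\lambda \leq 2^p p \int_0^\infty \lambda^{p - 1} \bigl( |\{ h^\# > \gamma \lambda \}| + c_1 \gamma |\{ Mh > \lambda \}| \bigr) \, d\lambda.
$$
The first integral is a constant multiple of $\gamma^{-p} \| h^\# \|_{l_p}^p$, and choosing $\gamma$ so that $2^p c_1 \gamma \leq 1/2$ allows the second to be absorbed into the left-hand side, using the a priori finiteness of $\| Mh \|_{l_p}$. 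This yields $\| Mh \|_{l_p}^p \leq N(p, d) \| h^\# \|_{l_p}^p$, and combined with $|h| \leq N(d) Mh$ the theorem follows.

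The main obstacle will be the parabolic discrete Calderon-Zygmund decomposition itself: one must set up a Vitali- or Whitney-type selection on $\bZ^{d+1}$ so that the cylinders $Q_{r_j}(t_j, x_j)$ are disjoint, exhaust $\Omega$ up to controlled overflow, and admit parabolic enlargements $Q_j^*$ with bounded overlap, all while respecting the anisotropic scaling (time scale $r^2$ versus spatial scale $r$). Once this covering step is carried out carefully, the remaining reduction of the $l_p$ estimate to the good-$\lambda$ inequality is entirely standard.
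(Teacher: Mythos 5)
Your route is the classical good-$\lambda$ argument, and it is genuinely different from the paper's proof. The paper never touches coverings or level sets: it introduces the filtration of parabolic dyadic partitions $\hat \bQ_n$ of $\bZ^{d+1}$ (time side $4^{-n}$, space side $2^{-n}$), quotes the abstract Fefferman--Stein theorem for filtrations of partitions (Theorem 3.2.10 of \cite{Kr_08}) to get $\|h\|_{l_p} \leq N \|h^{\#}_{dyadic}\|_{l_p}$, and then checks that each dyadic box $\hat Q_n (t, x)$ sits inside a cylinder $Q_{2^{-n+1}} (t, x)$ of comparable cardinality, so that $h^{\#}_{dyadic} \leq N (d)\, h^{\#}$ pointwise. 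All the stopping-time work is thus hidden in the quoted theorem, which is what makes the paper's proof short. Your argument, if completed, is self-contained and yields the stronger intermediate bound $\|M h\|_{l_p} \leq N \|h^{\#}\|_{l_p}$ for the parabolic maximal function $M$, at the cost of considerably more machinery.

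Two points need repair before your outline is a proof. First, the weak $(1,1)$ bound for $M$ does not follow from Corollary \ref{corollary 12.1} ``by Marcinkiewicz interpolation'': interpolation passes from weak endpoint bounds to strong intermediate ones, not the reverse. You must prove weak $(1,1)$ directly, e.g.\ by a Vitali-type covering argument for the cylinders $Q_r (t, x)$ (which do satisfy an engulfing property with constants depending only on $d$ despite the anisotropic scaling), or else run the entire good-$\lambda$ argument with the dyadic parabolic maximal function built on the partitions $\hat \bQ_n$, for which both the weak $(1,1)$ bound and the decomposition of the level set into maximal disjoint boxes are immediate; the latter choice essentially reproduces the paper's route. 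Second, the Calder\'on--Zygmund/Whitney selection you describe is the actual substance of the proof and you defer it; as stated it is also internally inconsistent (you ask that the average of $|h|$ over $Q_j$ stay below $\lambda$ while the average over the enlargement $Q_j^{*}$ exceeds $\lambda/N(d)$, and then deduce $h_{Q_j^{*}} \leq N (d) \lambda$, which does not follow from that lower bound; the upper bound has to come from maximality, i.e.\ from the enlargement meeting $\{M h \leq \lambda\}$ or from a parent box having small average). Both defects are fixable by standard means, and the rest of your scheme (good-$\lambda$ inequality, layer-cake, absorption using $\|M h\|_{l_p} < \infty$, which holds a priori since $h \in l_p (\bZ^{d+1})$, making your truncation remark unnecessary) is sound.
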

	
	\begin{proof}

		Let 
	$
		\{\hat \bQ_{n}, n \in \bZ\}
	$
	 be a sequence of partitions of $\bZ^{d+1}$ defined as follows: 
	$$
		\hat \bQ_n = \{  [ t 4^{-n}  ,  (t + 1) 4^{- n}] \times \Pi_{i = 1}^d [x^i 2^{-n}, (x^i + 1) 2^{-n}], t, x^1, \ldots, x^d \in \bZ\},   \, \, \text{if} \, \, n \leq 0, 
	$$
	 $$
		\hat \bQ_n = \bZ^{d+1}, n > 0.
	 $$ 
	It is easy to check $\{\hat \bQ_n, n \in \bZ\}$ is a filtration of partitions of $\bZ^{d+1}$ in the terminology of Section 3.1 of \cite{Kr_08}.

	Next, for any $(t, x) \in \bZ^{d+1}$ by $\hat Q_n (x)$ we denote the element of $\hat \bQ_n$ containing $(t, x)$.
	Denote
	$$
		f^{ \#}_{dyadic} (t, x) = \sup_{n \in \bN} |\hat Q_{n} (t, x)|^{-1} \sum_{ (m, y) \in \hat  Q_n (t, x)}  |f (m, y) - f_{\hat Q_n (t, x)}|.
	 $$
	Then,  by Theorem  3.2.10 of \cite{Kr_08}
	\begin{equation}
				\label{12.1}
		||f ||_{  l_p (\bZ^d) } \leq N  (p, d) ||f^{ \#}_{dyadic}||_{ l_p (\bZ^d) }. 
	\end{equation}
	Observe that, for any $n, t, x$, there exists $(\tilde t, \tilde x) \in \bR^{d+1}$ such that
	$$
		\hat Q_n (t, x)  \subset \{ (m, y) \in \bZ^{d+1}: |m  - \tilde t| \leq 4^{-n}, ||y  - \tilde x|| \leq 2^{-n}\}.  
	$$
	Clearly, the latter set is a subset of $Q_{ 2^{-n + 1} } (t, x)$.
	By what was just said for any $n < 0$
	$$
		|\hat Q_{n} (t, x)|^{-1}   \sum_{ (m, y) \in \hat  Q_n (t, x)}  |f (m, y) - f_{\hat Q_n (t, x) }|. 
	$$
	 $$
		 \leq |\hat Q_{n} (t, x)|^{-2}   \sum_{ (m_i, y_i) \in   Q_{ 2^{-n + 1} } (t, x), i  = 1, 2}  |f (m_1, y_1) - f (m_2, y_2)|
	$$
	 $$
		\leq 2  |\hat Q_{n} (t, x)|^{-2} |Q_{ 2^{-n + 1} }|   \sum_{ (m, y) \in   Q_{ 2^{-n + 1} }(t, x) }  |f (m, y) - f_{ Q_{2^{-n+1} (t, x)} }|
	 $$
	  $$
		\leq 2  |\hat Q_{n} (t, x)|^{-2} |Q_{ 2^{-n + 1} }|^2 f^{\#} (t, x) \leq N (d)    f^{\#} (t, x).
	  $$
	 Combining this with \eqref{12.1}, we prove Theorem \ref{theorem 12.2}.
	\end{proof}

	 \end{document}